\theoremstyle{plain} \textwidth=430pt \textheight=630pt
\newtheorem{theorem}{Theorem}[section]
\newtheorem{lemma}[theorem]{Lemma}
\newtheorem{proposition}[theorem]{Proposition}
\newtheorem{corollary}[theorem]{Corollary}
\newcommand\ZZ{{\mathbb Z}}
\newcommand\RR{{\mathbb R}}
\newcommand\sA{{\mathcal A}}
\newcommand\sB{{\mathcal B}}
\newcommand\LL{{\mathbb L}}
\newcommand\sE{{\mathcal E}}
\newcommand\sF{{\mathcal F}}
\newcommand\sR{{\mathcal R}}
\newcommand\sH{{\mathcal H}}
\newcommand\sN{{\mathcal N}}
\newcommand\df{\textbf}
\renewenvironment{proof}{{\bfseries \noindent Proof.\ }}{\qed}
\title{Constrained Percolation in Two Dimensions%$\mathbb{Z}^2$
}
\author{Alexander E. Holroyd}
\address{Microsoft Research, Redmond, WA 98052, USA}
\email{holroyd@microsoft.com}
\urladdr{\url{http://aeholroyd.org}}
\author{Zhongyang Li}
\address{Department of Mathematics,
University of Connecticut,
Storrs, Connecticut 06269-3009, USA}
\email{zhongyang.li@uconn.edu}
\urladdr{\url{http://www.math.uconn.edu/~zhongyang/}}
\begin{document}
\maketitle

%\begin{abstract}We study a class of constrained percolation processes on $\ZZ^2$, and prove the almost sure nonexistence of infinite clusters and contours by developing new combinatorial techniques which make use of the planar duality and symmetry. The proof requires no assumptions of stochastic monotonicity.
%
%\end{abstract}

\begin{abstract}
We prove absence of infinite clusters and contours in a
class of critical constrained percolation models on the
square lattice.  The percolation configuration is assumed
to satisfy certain hard local constraints, but only weak
symmetry and ergodicity conditions are imposed on its
law. The proofs use new combinatorial techniques
exploiting planar duality.

Applications include absence of infinite clusters of
diagonal edges for critical dimer models on the
square-octagon lattice, as well as absence of infinite
contours and infinite clusters for critical XOR Ising
models on the square grid.  We also prove that there
exists at most one infinite contour for high-temperature
XOR Ising models, and no infinite contour for
low-temperature XOR Ising model.
\end{abstract}

\section{Introduction}\label{cp}

\subsection{Background}

A central question in percolation and statistical physics
models is when there exists an \emph{infinite cluster};
that is, an infinite connected component of elements having
the same state.  The onset of infinite clusters as a model
parameter is varied may often be taken as a defining
characteristic of a phase transition or critical point.

A key problem is to determine whether infinite clusters are
present \emph{at} the critical point. In particular, this
question remains open in the archetypal case of independent
bond percolation on the hypercubic lattice in dimensions
$3\leq d\leq 10$ (see \cite{he80,grgP,HS94,fh15} for
details). Far more is known about certain two-dimensional
models, culminating in spectacularly detailed understanding
of the critical phase, via Schramm-Loewner Evolution
\cite{smirnov,smirnov-werner,lawler,werner,lsw02}. In many
settings, a necessary precondition for analysis of this
kind is \emph{self-duality}. One then expects the phase
transition point to coincide with the self-dual point,
which is the point where the primal and dual models have
equal parameters. However, proving this is often difficult,
since one must rule out coexistence of primal and dual
infinite clusters. This question was open for 20 years
(between \cite{ha60} and \cite{he80}) for bond percolation
on the square lattice. A further important circle of
questions concerns the number of infinite clusters.
Uniqueness of the infinite cluster for independent
percolation on the hypercubic lattice was also open for
many years, before the proof in \cite{akn} (later
simplified and generalized in \cite{bk89}). The central
conjecture in percolation on general transitive graphs is
that non-amenability of the graph is equivalent to
existence of a non-uniqueness phase \cite{beyond}.

In this article we address questions of existence,
coexistence, and uniqueness of infinite clusters for a
class of \emph{constrained} percolation models in two
dimensions. By this we mean that the configuration is
restricted to lie in a subset of the sample space where
certain hard local constraints are satisfied.  Subject to
this restriction, the probability measures that we consider
are very general.  Our main results require only very weak
symmetry and ergodicity conditions.  In particular, we do
not assume stochastic monotonicity or correlation
conditions such as the FKG inequality.

Many standard statistical physics models can be interpreted as constrained
percolation models.  Examples include the dimer model, or perfect matching
model (in which a configuration is a subset of edges in which each vertex has
exactly one incident edge present \cite{rkle}); the 1-2 model (where each
vertex has one or two incident present edges \cite{GrLreview}); the 6-vertex
model (where configurations are edge orientations on a degree-4 graph in
which each vertex has in- and out-degree 2 \cite{Bax08}); and some general
vertex models that can be transformed to dimer models on decorated graphs via
the holographic algorithm (\cite{Val, CL, SB, ZLlv}).  We will give
applications of our main results to dimer and XOR Ising models.

%A fundamental question in the theory of
%percolation and interacting particle system is when there exists an infinite
%connected set in which each element has the same state (i.e.\ an infinite
%cluster). Indeed this can be taken as a defining property of phase
%transition. However, it is typically a challenging problem to determine
%whether an infinite cluster exists at the phase transition point (at
%criticality). Regarding this problem, there are numerous, spectacular works.
%See, for example, \cite{ha60, he80, HS94,blps99,fh15} for an incomplete list
%in this direction.

%In this paper, we study a class of constrained percolation
%models. Compared to the unconstrained case,  constrained
%percolation models are concerned with  probability measures
%restricted to a subset of the unconstrained sample space,
%i.e.\ only configurations satisfying certain local
%constraints have positive probability to occur. Many models
%of interest can be interpreted as constrained percolation
%models.

%Examples include

Phase transitions of certain constrained percolation
problems have been studied extensively. See, for example,
\cite{KOS06} for the dimer model, and \cite{GrL15} for the
1-2 model. The integrability properties of these
constrained percolation problems make it possible to
exactly compute finite-dimensional distributions and
correlation functions. The critical parameter, i.e.\ the
parameter where discontinuity of a certain correlation
function is observed, can often be computed as the solution
of an explicit algebraic equation.

Although there are many results describing the phase
transitions of constrained percolation problems by a
microscopic observable, e.g.\ spin-spin correlation, up to
now, very few papers study the phase transitions of
constrained percolation models from a macroscopic
perspective, e.g.\ the existence of an infinite cluster.
Even though sometimes we know that a phase transition
exists with respect to a macroscopic observable, the exact
value of the critical parameter is unknown \cite{ZLejp},
and it is not known if the critical parameter in the
macroscopic sense coincides with the critical parameter in
the microscopic sense, except for very few special models,
such as the 2-dimensional Ising model
\cite{ABF87,ZLSC,ZL12}. One difficulty for constrained
percolation problems is that there is often no stochastic
monotonicity; see also \cite{She05,PM13, HM09}.

%Many 2-dimensional models of interest have a self-duality property, that is, there is a planar dual, which is always the same model, but with different parameters. In that case, one expects that the critical point will coincide with the self-dual point, where the model and its dual have equal parameters. However, proving this is often difficult, because one must rule out the possibility that both models have infinite clusters simultaneously at the self-dual point. In this article we provide new methods and conditions for ruling out such a possibility.

\subsection{Constrained Percolation}

In this paper, we study a class of constrained site
percolation problems on the 2-dimensional square lattice
$G=(\ZZ^2,E)$. The vertex set $\ZZ^2$ consists of all
points $(m,n)$ with integer coordinates. Two vertices
$(m,n)$ and $(m',n')$ are joined by an edge in $E$ if and
only if $|m-m'|+|n-n'|=1$.

Each face of $G$ is a unit square. We say that two faces of $G$ are
\df{adjacent} if they share an edge. Let $f$ be a face of $G$, and let
$(m_f,n_f)$ be the coordinate of the vertex at the lower left corner of $f$.
We color $f$ \df{white} if $m_f+n_f$ is odd. If $m_f+n_f$ is even, we color
$f$ \df{black}.

We consider site percolation on $G$, i.e.\ the state space is $\{0,1\}^{\ZZ^2}$. We call an element $\omega$ of $\{0,1\}^{\ZZ^2}$ a \df{configuration}, and we call $\omega(v)\in\{0,1\}$ the \df{state} of $v\in \ZZ^2$.  We impose the following constraint on site configurations.
\begin{itemize}
\item Around each black face, there are 6 allowed configurations $(0000)$, $(1111)$, $(0011)$, $(1100)$, $(0110)$, $(1001)$, where the digits from the left to the right correspond to vertices in clockwise order around the black square, starting from the lower left corner.
\end{itemize}

Note that in the unconstrained case, around each black
square, there are 16 different configurations, only 6 of
which are allowed in the constrained case. See Figure
\ref{lcc} for local configurations of the constrained
percolation around a black square.

\begin{figure}
\subfloat[0000]{\includegraphics[width=.16\textwidth]{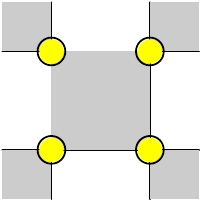}}\qquad\qquad\qquad\qquad
\subfloat[0011]{\includegraphics[width = .16\textwidth]{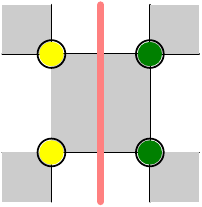}}\qquad\qquad\qquad\qquad
\subfloat[0110]{\includegraphics[width = .16\textwidth]{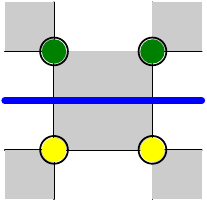}}\\
\subfloat[1111]{\includegraphics[width = .16\textwidth]{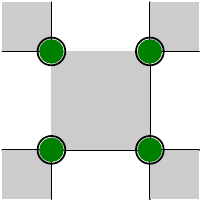}} \qquad\qquad\qquad\qquad
\subfloat[1100]{\includegraphics[width = .16\textwidth]{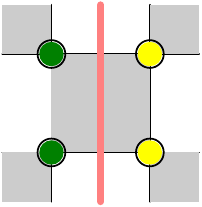}}\qquad\qquad\qquad\qquad
\subfloat[1001]{\includegraphics[width = .16\textwidth]{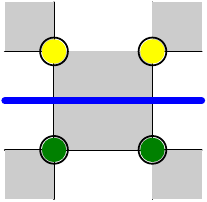}}
\caption{Local configurations of the constrained percolation around a black square. Red and blue lines mark contours separating 0's and 1's (in $\LL_1$ and $\LL_2$ respectively). Yellow (resp.\ green) disks represent 0's (resp.\ 1's).}
\label{lcc}
\end{figure}
Let $\Omega\subset \{0,1\}^{\ZZ^2}$ be the set of all configurations
satisfying the constraint above. Let $\mu$ be a probability measure on
$\Omega$.  We will be interested in such measures $\mu$ satisfying the
following conditions.

\begin{enumerate}[label=({A}{\arabic*})]
\item $\mu$ is $\sH$-translation-invariant, where $\sH$ is the subgroup of $\ZZ\times\ZZ$ generated by $(1,1)$ and $(1,-1)$.
\item $\mu$ is $2\ZZ\times 2\ZZ$-ergodic; i.e., any $2\ZZ\times 2\ZZ$-translation-invariant event has probability $0$ or $1$ under $\mu$.
\item $\mu$ is symmetric under exchanging $0$ and $1$; i.e.\ writing
    $\theta:\Omega\rightarrow\Omega$ for the map defined by
    $\theta(\omega)(v)=1-\omega(v)$ for each $v\in \ZZ^2$, the measure
    $\mu$ is invariant under $\theta$, that is, $\mu(A)=\mu(\theta(A))$
    for every event $A$.
\end{enumerate}

We will also consider the following variations of (A1) and (A2) (one weaker
and one stronger). Let $k$ be a positive integer.
\begin{enumerate}[label=({Ak}{\arabic*})]
\item $\mu$ is $2k\ZZ\times 2k\ZZ$-translation-invariant.
\item $\mu$ is $2k\ZZ\times 2k\ZZ$-ergodic; i.e., any $2k\ZZ\times 2k\ZZ$-translation-invariant event has probability $0$ or $1$ under $\mu$.
\end{enumerate}
Note for $k\geq 1$ we have
$$
(A1)\Rightarrow (Ak1);\qquad
(Ak2)\Rightarrow(A2),
$$
where ``$\Rightarrow$'' means ``implies''.

\subsection{Contours and Clusters}\label{cot}

Let $V\subseteq \ZZ^2$ be a set of vertices in $\ZZ^2$. We say that $V$ is
\df{connected} if it induces a connected subgraph of $G$.

Let $\omega\in \Omega$. A \df{cluster} of $\omega$ is a maximal connected set
of vertices of $G$ in which every vertex has the same state. If all the
vertices in a cluster have the state 0 (resp.\ 1), we call the cluster a
0-cluster (resp.\ 1-cluster). A cluster may be finite or infinite.  Here is
our first main result.

\begin{theorem}\label{m1}
Let $\mu$ be a probability measure on the constrained
percolation state space $\Omega$, satisfying $(A1)-(A3)$.
Let $\sA$ be the event that the number of infinite clusters
is nonzero and finite.  Then
\begin{eqnarray*}
\mu(\sA)=0.
\end{eqnarray*}
\end{theorem}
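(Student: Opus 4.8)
The plan is to follow the template of the Burton--Keane argument, but to replace its finite-energy input --- which is unavailable here, since the hard constraints defining $\Omega$ forbid the usual local surgery --- by a combinatorial argument on contours that exploits planar duality. The strategy splits into an easy reduction using $(A2)$--$(A3)$, a translation from clusters to contours, and one genuinely hard counting step.

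First I would record the consequences of ergodicity and symmetry. Let $N_0$ and $N_1$ denote the numbers of infinite $0$-clusters and infinite $1$-clusters, so that the total number of infinite clusters is $N_0+N_1$. Each of $N_0,N_1$ is a $2\ZZ\times 2\ZZ$-invariant random variable, so by $(A2)$ each equals a constant almost surely; write $N_0=m_0$ and $N_1=m_1$ a.s., with values in $\{0,1,\dots\}\cup\{\infty\}$. The map $\theta$ of $(A3)$ interchanges $0$-clusters and $1$-clusters, so $\{N_0=m_0\}=\theta^{-1}\{N_1=m_0\}$ up to a null set; invariance of $\mu$ under $\theta$ then forces $m_1=m_0=:m$. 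Hence the total number of infinite clusters equals $2m$, which is \emph{even}. In particular the value $1$ is immediately excluded, and on the event $\sA$ (where this number is finite and nonzero) we must have $1\le m<\infty$, so both an infinite $0$-cluster and an infinite $1$-cluster are present. This reduces Theorem \ref{m1} to excluding a finite, positive, common number $m$ of infinite clusters of each colour.

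Next I would pass from clusters to contours. Using the defining constraint around black faces --- which forces each black face to carry contour-degree $0$ or $2$ --- the contours of any $\omega\in\Omega$ can be organized into the two non-crossing families $\LL_1,\LL_2$ of Figure \ref{lcc}, so that every infinite contour is a genuine bi-infinite, non-self-crossing path. The coexistence of an infinite $0$-cluster and an infinite $1$-cluster guaranteed by the previous paragraph forces at least one bi-infinite contour separating them. Moreover the region lying between two consecutive bi-infinite contours of the same family is monochromatic, connected, and infinite, hence is itself an infinite cluster; therefore finitely many infinite clusters permit only finitely many bi-infinite contours. Writing $M$ for the number of bi-infinite contours, the same $(A2)$-argument shows $M$ is an a.s.\ constant, and on $\sA$ we have $1\le M<\infty$.

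The heart of the proof, and the step I expect to be the main obstacle, is to rule out a finite positive value of $M$. Here finite energy would ordinarily be used to manufacture trifurcation (encounter) points with positive probability, and it is exactly this input that the constraints remove. The plan is instead to produce branch points of the contour structure \emph{deterministically}: I would prove a purely combinatorial lemma asserting that, whenever infinite clusters of both colours coexist inside a constrained configuration, the rigidity of the six allowed black-face patterns --- read through planar duality between the primal colouring and the contour families $\LL_1,\LL_2$ --- already forces contour branch points to occur. Combined with $\sH$-invariance and the $0/1$ symmetry, this would give such branch points a positive density, so that a box of side $n$ contains $\Theta(n^2)$ of them. On the other hand, the standard Burton--Keane argument that branch points organize into a forest meeting $\partial B_n$ bounds their number in the box by $O(n)$, and these two estimates are contradictory. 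Thus $M\in\{0,\infty\}$, which together with the preceding paragraphs is incompatible with $\sA$, giving $\mu(\sA)=0$. The delicate point throughout is that every configuration witnessing a branch point must itself lie in $\Omega$: it is the leveraging of the constraint's rigidity in place of local modifiability, rather than any probabilistic surgery, that I expect to carry the argument.
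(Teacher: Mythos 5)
Your opening reduction is exactly the paper's: by $(A2)$ and $(A3)$ the numbers of infinite $0$- and $1$-clusters are a.s.\ equal constants, so the total number of infinite clusters is even, and on $\sA$ both colours coexist, which forces an infinite contour (\Cref{io}). After that the proposal goes off track. First, infinite contours here are \emph{not} in general bi-infinite simple paths: a contour is a connected component of present edges of $\LL_1\cup\LL_2$ and may contain vertices of degree $4$ (see Figure \ref{spc}); the paper controls their global shape not by local combinatorics but by the amenability/mass-transport fact that an invariant random subgraph a.s.\ has no component with more than two ends (\Cref{att}), followed by a topological analysis of the unbounded complementary components. Second, the region between two infinite contours need not be monochromatic --- it can contain finite contours and finite islands of the other colour --- although it does contain an infinite cluster (\Cref{rl,eo,bf}), which is the statement actually needed.

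The decisive gap is your ``hard step''. The proposed deterministic lemma --- that coexistence of infinite clusters of both colours forces contour branch points --- is false: the configuration with all $1$'s on an upper half-plane and all $0$'s on the complementary lower half-plane lies in $\Omega$ (every black face straddling the boundary reads $(0110)$ or $(1001)$, which are allowed), has infinite clusters of both colours, and its unique infinite contour is a straight line with no branching. So there is no positive density of branch points to feed into a Burton--Keane $\Theta(n^2)$ versus $O(n)$ count, and no substitute for finite energy is actually produced. The paper rules out a finite positive number of infinite contours by a quite different double-parity argument: writing $\sN=|\mathcal{C}_2|$ for the number of infinite contours whose complement has two unbounded components, it shows via \Cref{l33,eo,eoi} that the total number of infinite clusters equals $\sN+1$; meanwhile translation by $(1,1)$ exchanges $\LL_1$ and $\LL_2$, so by $(A1)$--$(A2)$ the primal and dual counts in $\mathcal{C}_2$ agree a.s.\ and $\sN$ is even, making $\sN+1$ odd --- contradicting the evenness you correctly established. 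As written, your argument does not close; it would need the branch-point step replaced by some such parity or end-counting mechanism.
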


Note that \Cref{m1} requires no assumptions of stochastic monotonicity, correlation inequalities, or rotation-invariance. See \cite{crpr76,Hig93,Berg08} for related results requiring stochastic monotonicity and rotation-invariance.

The conclusion of \Cref{m1} does not in general hold for
\emph{unconstrained} percolation meausres. Here is an
example.  Let $X=(X_m)_{m\in\ZZ}$ and $Y=(Y_n)_{n\in\ZZ}$
be independent families of i.i.d.\ Bernoulli random
variables with parameter $1/2$.  Define a \df{run} of $X$
(resp.\ $Y$) to be a maximal nonempty interval of $\ZZ$ on
which the corresponding variables are all equal. Define a
\df{run rectangle} to be a set of the form $I\times
J\subset\ZZ^2$ where $I$ is a run of $X$ and $J$ is a run
of $Y$.  Note that the run rectangles partition $\ZZ^2$.
Call a run rectangle a \df{site rectangle} if both $I$ and
$J$ are runs of 1s, and a \df{bond rectangle} if exactly
one of them is a run of $1$s. Let $\mathcal{G}$ be the
(random) graph whose vertex set is the set of site
rectangles, whose edge set is the set of all bond
rectangles, and where a vertex and an edge are incident if
some site of one is adjacent in $G$ to some site of the
other. It is easily seen that $\mathcal{G}$ is isomorphic
to $G$ a.s. Now take a uniform spanning tree of
$\mathcal{G}$ (conditional on $X$ and $Y$).  Finally,
assign a vertex of $G$ the value $1$ if it lies in a site
rectangle or it lies in a bond rectangle whose edge is
included in the spanning tree; otherwise assign value $0$.
It is straightforward to check that the resulting random
configuration on $G$ is $\sH$-translation-invariant and
symmetric. Moreover, a.s.\ there exist both an infinite
0-cluster and an infinite 1-cluster. Indeed, the measure is
$2\ZZ\times 2\ZZ$ ergodic as well. (This can be checked
from mixing properties of the uniform spanning tree.)

% A random graph $G_r=(V_r,E_r)$ is constructed as follows.
%Each vertex in $V_r$ corresponds to a maximal rectangle on the plane in which
%every vertex in $\ZZ^2$ has both horizontal projection and vertical
%projection with associated Bernoulli random variable taking value 1. Two
%vertices in $V_r$ are joined by edge if and only if they are nearest
%rectangles either at the same horizontal level or at the same vertical level.
%We consider the uniform spanning tree on $G_r$. A vertex $v\in \ZZ^2$ takes
%value 1 if it is in the uniform spanning tree on $G_r$; otherwise the vertex
%takes value 0.

We also consider contours separating clusters. For this purpose, we introduce
two auxiliary square grids, $\LL_1$ and $\LL_2$, whose vertices are located
at centers of white faces of the original square grid $G$. The primal (resp.\
dual) auxiliary square grid $\LL_1$ (resp.\ $\LL_2$) has vertices located at
points $(m-\frac{1}{2},n+\frac{1}{2})$ of the plane, in which both $m$ and
$n$ are even (resp.\ odd). Two vertices $(m-\frac{1}{2},n+\frac{1}{2})$,
$(m'-\frac{1}{2},n'+\frac{1}{2})$ of $\LL_1$ (resp.\ $\LL_2$) are joined by
an edge of $\LL_1$ (resp.\ $\LL_2$) if and only if $|m-m'|+|n-n'|=2$.
Evidently each face of $\LL_1$ or $\LL_2$ is a square of side length 2. See
Figure \ref{gll}.

\begin{figure}
\includegraphics{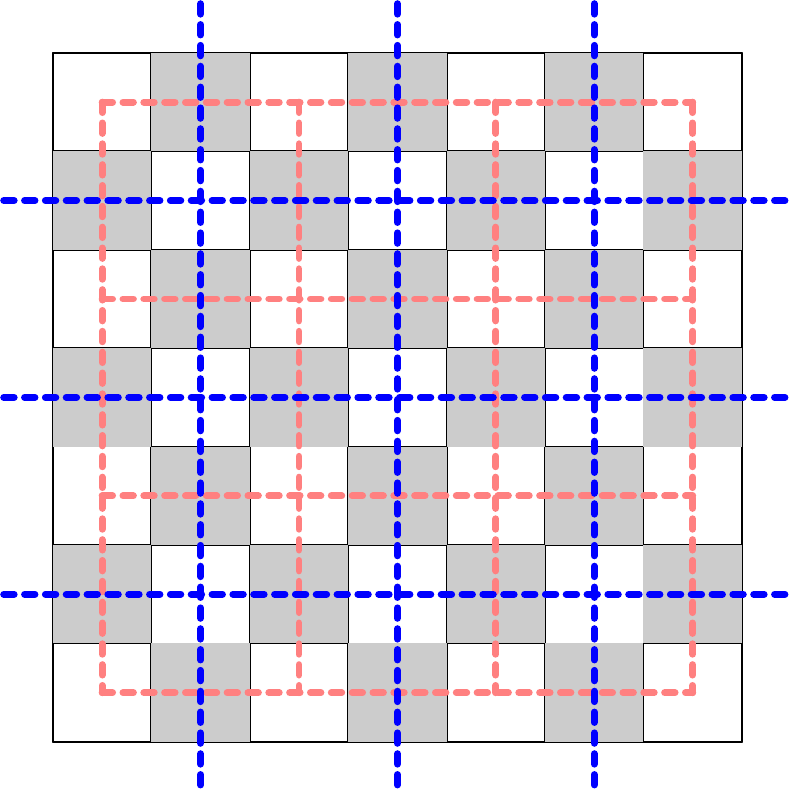}
\caption{Graphs $G$, $\LL_1$, and $\LL_2$. Solid lines represent $G$. Red dashed lines represent $\LL_1$. Blue dashed lines represent $\LL_2$.}\label{gll}
\end{figure}

Moreover, each black face $F$ of $G$ is crossed by an edge $e_1$ of $\LL_1$
(in the sense that $F$ and $e_1$ share a center). Similarly, $F$ is crossed
by an edge $e_2$ of $\LL_2$. The two edges of $\LL_1$ and $\LL_2$ that cross
the same black face of $G$ are perpendicular to each other.  Each
configuration in $\omega\in\Omega$ corresponds to a configuration in
$\phi(\omega)\in\{0,1\}^{E(\LL_1)\cup E(\LL_2)}$, where $E(\LL_1)$ (resp.\
$E(\LL_2)$) is the edge set of $\LL_1$ (resp.\ $\LL_2$), as follows.  For
each black face $F$ of $G$,  if the configuration around $F$ is $(0000)$ or
$(1111)$, then we set $\phi(\omega)(e_1)=\phi(\omega)(e_2)=0$, for the two
edges $e_1\in E(\LL_1)$ and $e_2\in E(\LL_2)$ that cross $F$. If the
configuration around $F$ is $(1001)$ or $(0110)$ (so that the two upper
vertices have one state, and the two lower vertices have the other), then we
let $\phi(\omega)$ take value 1 on the horizontal edge ($e_1$ or $e_2$), and
0 on the vertical edge. Similarly in the cases $(0011)$ and $(1100)$, we set
$\phi(\omega)$ to be 1 on the vertical edge and 0 on the horizontal edge. We
say an edge $e\in E(\LL_1)\cup E(\LL_2)$ is \df{present} (resp.\ \df{absent})
if it has state $1$ (resp.\ 0). See Figure \ref{spc}.

\begin{figure}
\includegraphics{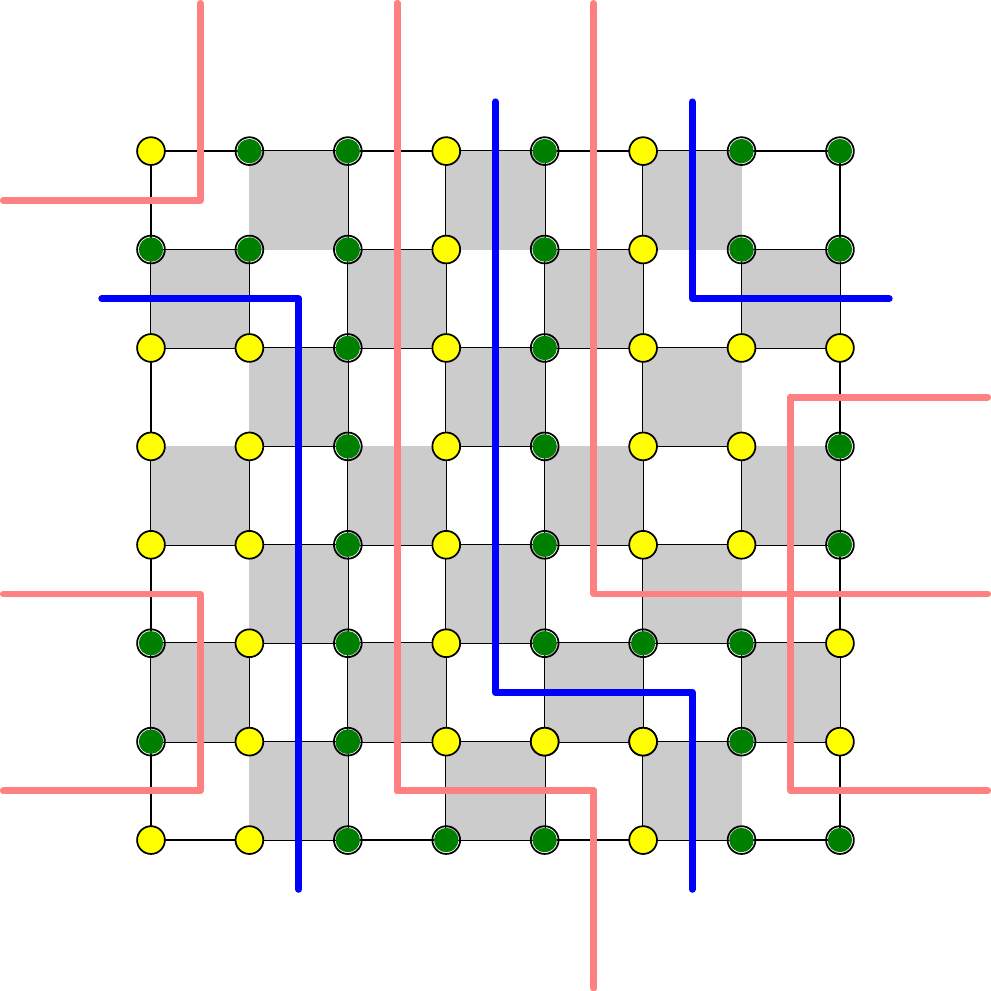}
\caption{Constrained percolation configuration and associated
contour configuration. Red lines represent primal contours.
Blue lines represent dual contours.
Green and yellow discs represent the two states of vertices. }\label{spc}
\end{figure}

The present edges of $\LL_1$ and $\LL_2$ are boundaries separating the state
0 and the state 1. It is easily verified that each vertex of $\LL_1$ or
$\LL_2$ has an even number of incident present edges. Moreover, the present
edges of $\LL_1$ and $\LL_2$ can never cross.

The image of $\Omega$ under the map $\phi$ is a subset of $\{0,1\}^{E(\LL_1)\cup E(\LL_2)}$. We call elements of this image \df{contour configurations}. A configuration in $\{0,1\}^{E(\LL_1)\cup E(\LL_2)}$ is a contour configuration if and only if
\begin{enumerate}
\item \label{c1} each vertex in $\LL_1$ or $\LL_2$ has an even number of incident present edges;
\item \label{c2} present edges in $E(\LL_1)$ and present edges in $E(\LL_2)$ do not cross.
\end{enumerate}
We use $\Phi$ to denote the set of all contour
configurations. The map $\phi:\Omega\rightarrow\Phi$ is
surjective and 2-to-1.  Specifically, the configurations
$\omega$ and $\theta(\omega)$ (but no others) have the same
image under $\phi$, where $\theta$ is the $0/1$ exchange
map defined in (A3).

Let $\omega\in \Omega$, and $\phi=\phi(\omega)\in\Phi$. Each connected
component of present edges in $\phi$ is called a \df{contour} of $\phi$.
Since present primal edges and dual primal edges do not cross in a contour
configuration, either all the edges in a contour are primal edges (edges of
$\LL_1$), or all the edges in a contour are dual edges (edges of $\LL_2$). A
contour is a \df{primal} (resp.\ \df{dual}) \df{contour} if it consists of
edges of $\LL_1$ (resp.\ edges of $\LL_2$). A (primal or dual) contour is
called finite (resp.\ infinite) if it consists of finitely many (resp.\
infinitely many) edges. Note that a contour may have 4 edges sharing a
vertex; see Figure \ref{spc}.

Let $D$ be a cluster of $\omega\in \Omega$, and let $C$ be a contour of
$\phi(\omega)\in\Phi$. We say $C$ is \df{incident} to $D$ if there exists
$e\in C\subseteq E(\LL_1)\cup E(\LL_2)$, and a vertex $v\in D\subseteq
\ZZ^2$, such that $v$ is at Euclidean distance $\frac{1}{2}$ from the center
point of $e$.

%We define the $*$-lattice $G_*=(\ZZ^2,E_*)$ as follows. Two vertices
%$(m,n),(m',n')\in \ZZ^2$ are connected by an edge in $E_*$ (adjacent in
%$G_*$) if $\|(m,n)-(m',n')\|_{\infty}=1$. Note that, for example, the two
%vertices $(0,0)$ and $(1,1)$ are not adjacent in $G$, but are adjacent in
%$G_*$.

%A \df{$*$-cluster} of a configuration $\omega\in \Omega$ is a maximal
%connected set of vertices in $G_*$, in which every vertex has the same state.
%Each cluster of $\omega$ is a subset of a $*$-cluster of $\omega$. Similarly
%we define 0-$*$-cluster, 1-$*$-cluster, finite $*$-cluster, and infinite
%$*$-cluster. In particular, if there is an infinite cluster in $\omega$, then
%there is an infinite $*$-cluster in $\omega$, but not vice versa.

Let $\mu$ be a probability measure on $\Omega$. Note that $\mu$ induces a probability measure $\nu$ on contour configurations in $\Phi$, via the map $\phi$. We consider a random configuration in $\Omega$ with law $\mu$. Then the image $\phi$ is an associated contour configuration with law $\nu$.

 Let $\nu_1$ (resp.\ $\nu_2$) be the corresponding marginal distribution of $\nu$ on bond configurations of $\LL_1$ (resp.\ $\LL_2$). Let $\Phi_1$ (resp.\ $\Phi_2$) be the state space consisting of bond configurations of $\LL_1$ (resp.\ $\LL_2$) satisfying the condition that each vertex has an even number of incident present edges. In some cases, we may wish to assume the following.

 \begin{enumerate}[label=({A}{\arabic*})]
\setcounter{enumi}{3}
\item $\nu_1$ has \df{finite energy} in the following sense: let $S$ be a
    face of $\LL_1$, and $\partial S\subset E(\LL_1)$ be the set of four
    sides of the square $S$. Let $\phi\in \Phi_1$. Define $\phi_S$ to be
    the configuration obtained by switching the states of each element of
    $\partial S$, i.e.\ $\phi_S(e)=1-\phi(e)$ if $e\in\partial S$, and
    $\phi_S(e)=\phi(e)$ otherwise; see Figure \ref{ccl}. Let $E$ be an
    event, and define
\begin{eqnarray}
E_S=\{\phi_S:\phi\in E\}.\label{ef}
\end{eqnarray}
Then $\nu_1(E_S)>0$ whenever $\nu_1(E)>0$.
\end{enumerate}

\begin{figure}
 \centering
  \includegraphics[width=0.3\textwidth]{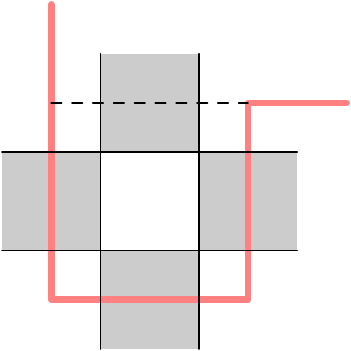}\qquad\qquad \includegraphics[width=0.3\textwidth]{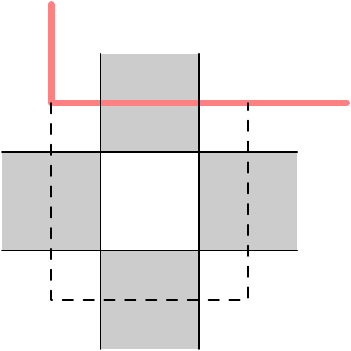}
 \caption{Modifying a contour configuration by flipping a unit square.
   The finite energy condition states that the law of the contour configuration after such a
   change is absolutely continuous with respect to the original.}\label{ccl}
 \end{figure}

Note that, for each $\phi\in \Phi_1$, the corresponding $\phi_S$ defined in Assumption (A4)
is still in $\Phi_1$.

\begin{samepage}
\begin{theorem}\label{m2}Let $\mu$ be a probability measure on $\Omega$,
and consider the corresponding contour configurations as defined above. Under
Assumptions $(A1)$--$(A4)$:
\begin{enumerate}
\item $\mu$-a.s.\ there are neither infinite primal contours nor infinite dual contours.
\item $\mu$-a.s.\ there are no infinite clusters.
\end{enumerate}
\end{theorem}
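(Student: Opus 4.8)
The plan is to derive part (2) from part (1) together with \Cref{m1}. Granting part (1), $\mu$-a.s.\ every primal and every dual contour is finite. Since the present edges of $\LL_1\cup\LL_2$ are precisely the interfaces separating the $0$'s from the $1$'s, every cluster other than possibly the unbounded region is enclosed by a finite contour and is therefore finite. Because each finite contour bounds a finite region, no contour set consisting only of finite contours can separate two distinct unbounded regions, so the complement of the contour set has at most one unbounded component; hence $\mu$-a.s.\ there is at most one infinite cluster. But \Cref{m1} asserts that the number of infinite clusters is $\mu$-a.s.\ not finite and positive, in particular not equal to $1$. These two facts together force the number of infinite clusters to be $0$ almost surely, which is exactly part (2). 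I would record the deterministic statement ``all contours finite $\Rightarrow$ at most one infinite cluster'' as a separate Jordan-curve lemma.

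It remains to prove part (1). First I would exploit a symmetry hidden in (A1): translation by $(1,1)$ lies in $\sH$, preserves $\mu$, maps black faces to black faces, and carries $\LL_1$ bijectively onto $\LL_2$. Consequently the law $\nu_2$ of the dual contours is the image of the law $\nu_1$ of the primal contours under this translation, so the finite-energy hypothesis (A4) transfers from $\nu_1$ to $\nu_2$, and it suffices to rule out infinite primal contours. Working on the marginal $\nu_1$ supported on $\Phi_1$, which inherits $2\ZZ\times2\ZZ$-ergodicity from (A2), the number $N_1$ of infinite primal contours is $\mu$-a.s.\ equal to a constant in $\{0,1,\dots,\infty\}$, and the goal is to show this constant is $0$.

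The heart of the matter, and the main obstacle, is to rule out $1\le N_1$. Here a Burton--Keane argument applied to the marginal $\nu_1$ alone cannot suffice: ergodic finite-energy measures on even subgraphs of $\ZZ^2$ may support infinite loops, so the no-infinite-contour conclusion must use the structural coupling (c2) between $\LL_1$ and $\LL_2$, namely that present primal and present dual edges never cross. I would therefore work with the joint contour measure $\nu$ on $\Phi$ and seek to show that an infinite primal contour, together with its non-crossing interaction with the dual contours and the branch points that the square-flip of (A4) can create, contradicts the a.s.\ constancy of $N_1$ on the amenable lattice $\ZZ^2$ (ruling out $N_1=\infty$ by the usual linear bound on branch points in a box, and the finite values $N_1\ge 1$ by local surgery). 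The delicate point is that a single square-flip reroutes a contour rather than cutting or merging it, so the classical insertion-tolerance manipulations do not transcribe verbatim; they must be replaced by finite combinations of flips engineered, using the planar duality via (c2) and a case analysis of the local pictures in Figure \ref{lcc}, so as to alter the global connectivity at infinity. Verifying that such combinations exist and occur with positive probability is where I expect the genuine combinatorial work to lie. Once $N_1=0$ is established, the $(1,1)$-symmetry immediately gives $N_2=0$, completing part (1).
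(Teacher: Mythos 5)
There is a genuine gap: part (1) is never actually proved, and the strategy you sketch for it is not one that can be completed as described. Your reduction of (2) to (1) is fine in itself (the deterministic claim ``all contours finite $\Rightarrow$ at most one infinite cluster'' follows from \Cref{p34} and \Cref{l24} as in the paper's \Cref{bf}, and combining with \Cref{m1} then forces zero infinite clusters). But you then defer the entire proof of (1) to ``finite combinations of flips engineered \dots so as to alter the global connectivity at infinity,'' explicitly flagging this as ``where the genuine combinatorial work'' lies. That is precisely the step that cannot be done by surgery alone: the square-flip of (A4) preserves the even-degree constraint and can merge or reroute contours, but there is no local modification that destroys an infinite contour, so no insertion/deletion-tolerance argument will take you from ``$N_1\geq 1$ with positive probability'' to a contradiction. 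Indeed, the paper's surgery lemmas (\Cref{sp,spm,l44,l61}) only ever yield \emph{at most one} infinite primal contour (\Cref{m4}), never zero; and \Cref{m5}, which does conclude $N_1=0$, needs the extra hypothesis (A5) and a different argument.

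The paper's proof of \Cref{m2} runs in the opposite logical direction from yours: it first proves (2) and then deduces (1). Concretely, it uses \Cref{m4} to reduce to at most one infinite primal and one infinite dual contour, classifies the surviving infinite contours by the number of unbounded components of their complements ($\mathcal{C}_0,\mathcal{C}_1,\mathcal{C}_2$, via \Cref{att,ec2}), and shows through \Cref{uic,bf,eoi} that each of the possible cases forces the number of infinite clusters to be exactly $1$ or exactly $3$ --- contradicting the conclusion of \Cref{m1} that this number is a.s.\ $0$ or $\infty$. Having obtained (2), it gets (1) from \Cref{ncl}: the $(1,1)$-translation symmetry makes infinite primal and dual contours equiprobable, ergodicity makes both occur a.s.\ if either does, and two distinct infinite contours would produce an infinite cluster by \Cref{cc}. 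So the interplay between clusters and contours (in particular \Cref{io,cc,rl}) is not an optional refinement but the engine of the whole proof; a proof of (1) that does not pass through the cluster analysis would require a new idea that your proposal does not supply.
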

\end{samepage}

Let $\xi$ be the (0- or 1-) cluster including the origin in a random constrained percolation configuration. We define the mean cluster size $\chi$ as follows
\begin{eqnarray}
\chi=\mathbb{E}|\xi|,\label{mcs}
\end{eqnarray}
where $|\xi|$ is the number of vertices in the cluster $\xi$. By Proposition 1 of \cite{Rus78}, in any translation-invariant measure on $\{0,1\}^{\ZZ^2}$ that has no infinite 0- or 1-clusters satisfies $\chi=\infty$. Therefore \Cref{m2} implies that $\chi=\infty$ for any probability measure on $\Omega$ satisfying (A1)-(A4).

Note that Assumption (A4) is important for the conclusion
of Theorem \ref{m2}. In fact, if Assumption (A4) does not
hold, it is possible that there exists more than one
infinite cluster with positive probability. Consider, for
example, a distribution of  constrained percolation
configurations on $\ZZ^2$, such that each row of $\ZZ^2$ is
either all 0's with probability $\frac{1}{2}$, or all 1's
with probability $\frac{1}{2}$, and the configurations on
different rows are independent. This distribution satisfies
Assumptions (A1)--(A3), but not (A4) (and it is not ergodic
under the group of horizontal translations). With
probability 1 there exist infinitely many infinite clusters
(indeed, $*$-clusters) under such a distribution.

Under the same Assumptions (A1)--(A4), we have a stronger
conclusion. In order to state the conclusion, let $\phi_1$
(resp.\ $\phi_2$) be a contour configuration on $\LL_1$
(resp.\ $\LL_2$). Let $G\setminus\phi_1$ (resp.\
$G\setminus\phi_2$) be the graph obtained from $G$ by
removing every edge that is crossed by a present edge of
$\phi_1$ (resp.\ $\phi_2$).

\begin{theorem}\label{m3}Let $\mu$ be a probability measure on $\Omega$ satisfying the Assumptions $(A1)$--$(A4)$. Let $\nu_1$  be the corresponding marginal distribution on bond configurations in $\Phi_1$. Let $\phi_1$ be the union of all primal contours. Then $\nu_1$-a.s.\ $G\setminus \phi_1$ has no infinite components.
\end{theorem}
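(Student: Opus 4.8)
The plan is to reduce the assertion to the absence of infinite clusters, \Cref{m2}(2), applied to an auxiliary configuration obtained by \emph{erasing the dual contours}.

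First I would check that $\phi_1$, regarded on its own, is a legitimate contour configuration. Since present primal and present dual edges never share a vertex (their vertex sets, the even--even and odd--odd white centres, are disjoint), every vertex of $\LL_1$ has the same number of incident present edges in $\phi_1$ as in $\phi(\omega)$, hence an even number, while every vertex of $\LL_2$ has degree $0$ in $\phi_1$; moreover $\phi_1$ contains no crossing pair. Thus $\phi_1\in\Phi$, and there is a configuration $\omega'\in\Omega$ with $\phi(\omega')=\phi_1$, unique up to the $0/1$ exchange $\theta$. The decisive observation is the identity
\begin{equation*}
\{\text{clusters of }\omega'\}=\{\text{connected components of }G\setminus\phi_1\}.
\end{equation*}
Indeed, an edge of $G$ joins two vertices of differing $\omega'$-state if and only if it is crossed by a present edge of $\phi(\omega')=\phi_1$; since $\omega'$ has no dual contours, deleting from $G$ all edges joining differently-stated vertices of $\omega'$ — whose components are exactly the clusters of $\omega'$ — produces precisely $G\setminus\phi_1$. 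Consequently it suffices to prove that $\omega'$ has no infinite cluster.

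Next I would identify the law of $\omega'$ and verify the hypotheses of \Cref{m2}. The primal marginal of the contour law of $\omega'$ is exactly $\nu_1$, so Assumption (A4) transfers verbatim; this is the reason the finite-energy hypothesis is imposed on the primal side and the conclusion concerns primal contours. The measure $\nu_1$ is inherited from $\mu$ through the factor map $\phi$, which is equivariant under $2\ZZ\times2\ZZ$, so $\nu_1$ is $2\ZZ\times2\ZZ$-invariant and $2\ZZ\times2\ZZ$-ergodic by (A1)--(A2). Because $\theta$ fixes $\phi_1$, the law of $\omega'$ may be chosen symmetric under $\theta$, supplying (A3). With these in hand, \Cref{m2}(2) yields that $\omega'$ has a.s.\ no infinite cluster, which by the identity above is exactly the assertion that $G\setminus\phi_1$ has a.s.\ no infinite component.

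The main obstacle is a symmetry mismatch in this last step. The two generators $(1,1)$ and $(1,-1)$ of $\sH$ interchange $\LL_1$ and $\LL_2$, and only the index-$2$ subgroup $2\ZZ\times2\ZZ$ preserves each grid, so erasing the dual contours destroys the diagonal symmetry: the law of $\omega'$ is only $2\ZZ\times2\ZZ$-invariant, i.e.\ it satisfies (Ak1) with $k=1$ rather than the full (A1). I would resolve this by checking that \Cref{m2}(2), together with its ingredient \Cref{m1} and the Burton--Keane-type uniqueness argument behind it, uses only the square-sublattice translations $2\ZZ\times2\ZZ$ with (A2)--(A4) and not the diagonal translations; the very introduction of the variants (Ak1)--(Ak2) signals that the proofs are robust to passing to finite-index sublattices. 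A related point requiring care is that the $\theta$-symmetrisation supplying (A3) is not ergodic, its ergodic components being the two $\theta$-images; I would therefore phrase the argument directly in terms of $\nu_1$, on which the number of infinite components of $G\setminus\phi_1$ is a $2\ZZ\times2\ZZ$-invariant and hence a.s.\ constant quantity, and invoke the uniqueness argument at that level.
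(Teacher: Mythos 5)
Your reduction is the same as the paper's: erasing the dual contours yields a constrained configuration $\omega'$ with $\phi(\omega')=\phi_1$, whose clusters are exactly the components of $G\setminus\phi_1$ (the paper phrases this via the induced site configuration $\gamma_2(\psi_1)$ on $V(\LL_2)$, but it is the same observation). The gap is in the second half, and it is exactly at the point you flag and then wave away. The law of $\omega'$ is not $\sH$-invariant, and the proof of \Cref{m2}(2) does \emph{not} survive on the sublattice $2\ZZ\times2\ZZ$ alone: the proofs of \Cref{m1}, \Cref{ncl}, and \Cref{m2} all invoke the translation by $(1,1)$, which exchanges $\LL_1$ and $\LL_2$, precisely in order to equate the numbers of infinite primal and infinite dual contours (e.g.\ the step ``by (A1)(A2) the numbers of primal and dual contours in $\mathcal{C}_2$ are equal, hence $\sN$ is even'' in the proof of \Cref{m1}, and the bijection $\sA_1\leftrightarrow\sA_2$ in \Cref{ncl}). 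For $\omega'$ there are no dual contours at all, so every such counting argument collapses, and the parity contradiction driving \Cref{m1} disappears: a single two-ended infinite primal contour in $\mathcal{C}_2$ separating exactly one infinite $0$-cluster from exactly one infinite $1$-cluster gives $\sN=1$ and an even number ($2$) of infinite clusters, consistent with (A2)--(A3) and (A4). This is precisely the scenario the paper must work to exclude elsewhere; note that \Cref{m5}, which addresses exactly the ``primal contours only'' situation, requires the \emph{additional} hypothesis (A5), and your fallback of working ``directly in terms of $\nu_1$'' does not help since \Cref{m2} is a statement about measures on $\Omega$, and the ergodicity of the $\theta$-symmetrized lift of $\nu_1$ is itself not guaranteed by (A1)--(A4) (that is what (A5) is for). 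The remark that the variants (Ak1)--(Ak2) ``signal robustness'' points the wrong way: they are introduced for theorems (\Cref{m4,m5}) with strictly weaker conclusions, because the diagonal symmetry is unavailable there.

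For contrast, the paper's actual argument never applies \Cref{m2} to the dual-erased configuration. It applies \Cref{m2} to the original measure $\mu$ (which does satisfy (A1)--(A4)) to conclude that a.s.\ all contours and all clusters of $\omega$ are finite, and then runs a nested-contour dichotomy: if only finitely many (finite) contours surrounded the origin, the intersection of their unbounded complementary components would be an unbounded connected region containing a vertex and hence an infinite cluster of $\omega$, a contradiction; therefore infinitely many contours surround the origin, and the primal--dual exchange symmetry of the \emph{original} measure $\mu$ together with (A2) upgrades this to ``infinitely many \emph{primal} contours surround the origin,'' which traps every component of $G\setminus\phi_1$ in a bounded region. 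If you want to repair your write-up, you need to replace the appeal to \Cref{m2}(2) for $\omega'$ by an argument of this kind that exploits the symmetry of $\mu$ before the dual contours are discarded.
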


We also have some results on contours without the symmetry
assumption (A3).

\begin{theorem}\label{m4}Let $\mu$ be a probability measure on the constrained percolation state space $\Omega$, satisfying  Assumptions $(Ak1)$,$(Ak2)$,$(A4)$, for some positive integer $k$. Then $\mu$-a.s.\ there is at most one infinite primal contour.
\end{theorem}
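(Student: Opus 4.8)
The plan is to reduce the statement to the classical Burton--Keane uniqueness theorem, applied not to the contours themselves but to a two-coloring dual to them. A direct Burton--Keane argument on the primal contours is obstructed by the even-degree constraint defining $\Phi_1$: two ``parallel'' infinite contours cross any box an even number of times, and no local modification in the box can pair up the resulting ends into fewer than two infinite pieces, so one cannot simply merge contours. Routing through the dual coloring removes this obstruction. Throughout, note that the number of infinite primal contours is a $2k\ZZ\times 2k\ZZ$-invariant random variable, hence $\nu_1$-a.s.\ equal to a constant by $(Ak2)$; it therefore suffices to rule out two or more.

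First I would pass from contours to a coloring via planar duality. Every $\phi_1\in\Phi_1$ is an even subgraph of $\LL_1$, and $\LL_1$ is planar with trivial first homology, so its present edges are exactly the edge-boundary of a set of faces of $\LL_1$. The faces of $\LL_1$ themselves form a copy of $\ZZ^2$, and I would two-color them by a function $h\in\{0,1\}$, determined up to a global flip, so that an edge of $\LL_1$ is present if and only if its two incident faces receive different colors; this is well defined because any closed dual path crosses an even number of present edges, by the even-degree condition. Under this correspondence a primal contour is precisely a connected component of the color-interface, and the $2$-to-$1$ global-flip ambiguity matches the involution $\theta$.

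Next I would translate the hypotheses. Flipping the four edges bounding a face $S$ of $\LL_1$ is exactly flipping the color $h$ at the single dual site corresponding to $S$, so Assumption $(A4)$ is precisely the single-site finite-energy property for the law of $h$, while $(Ak1)$ and $(Ak2)$ give translation invariance and ergodicity of $h$ under $2k\ZZ\times 2k\ZZ$. I would then run the standard Burton--Keane argument for the coloring: single-site finite energy lets one fill a large box with a single color, merging all infinite clusters of that color meeting the box, and the encounter-point counting estimate, valid because $\ZZ^2$ is amenable, excludes infinitely many infinite clusters. This yields that $h$ has $\nu_1$-a.s.\ at most one infinite $1$-cluster and at most one infinite $0$-cluster. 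The global-flip ambiguity is harmless here, since this event is flip-invariant and one may argue with the flip-symmetrized law.

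The main step, and the one I expect to be the crux, is the topological implication: if $h$ has at most one infinite cluster of each color, then there is at most one infinite interface component, i.e.\ at most one infinite primal contour. I would first argue that an infinite contour must border an infinite cluster of each color, since an interface component meeting only finite clusters on one side is itself finite; hence every infinite contour separates the unique infinite $1$-cluster $A$ from the unique infinite $0$-cluster $B$. Two distinct infinite contours would then divide the plane into at least three unbounded regions whose colors must alternate across successive interfaces, forcing the two outermost regions to share a color while being separated by the middle region, and thereby producing two infinite clusters of that color -- contradicting uniqueness. Making this rigorous on $\LL_1$ is where the real work lies, because contours may branch at degree-$4$ vertices and one must control the contribution of finite clusters to the ends of the complementary regions; I would handle this with the same planar-duality bookkeeping (tracking faces incident to a contour) used for the earlier theorems. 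Combining this topological step with the Burton--Keane bound on clusters gives at most one infinite primal contour, as required.
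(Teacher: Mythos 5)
Your reduction breaks at the topological step, which is not a technicality but the crux: the implication ``at most one infinite cluster of each color $\Rightarrow$ at most one infinite primal contour'' is false, and so is the sub-claim that an infinite contour must border an infinite cluster of each color. A contour is a connected component of present edges, and connectivity is through shared vertices of $\LL_1$, including degree-$4$ vertices (the paper notes explicitly that a contour may have four edges meeting at a vertex). Take the coloring $h$ that is $1$ everywhere except at a diagonal chain of faces $F_0+n(2,2)$, $n\in\ZZ$, where it is $0$. Each $0$-face is an isolated singleton, so every $0$-cluster is finite and the $1$'s form a single infinite cluster; but the four-cycles bounding consecutive $0$-faces share a corner vertex of $\LL_1$, so their union is one \emph{infinite} contour whose ``$0$-side'' consists entirely of finite clusters. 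Two such parallel chains give a configuration with exactly one infinite $1$-cluster, no infinite $0$-cluster, and \emph{two} infinite primal contours. Hence Burton--Keane applied to the coloring, even granting everything in your second and third paragraphs, cannot deliver the conclusion: it says nothing about infinite contours assembled by stringing together boundaries of finite clusters through degree-$4$ vertices, and your alternating-unbounded-regions argument never gets off the ground because such contours need not separate the two infinite clusters (indeed both unbounded complementary regions of one chain carry the same color).

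For comparison, the paper runs the Newman--Schulman/Burton--Keane scheme \emph{directly on the contours}: \Cref{att} (a mass-transport/amenability argument) gives that no contour has more than two ends; the combinatorial surgery of \Cref{sp,spm,l44} shows that all present edges crossing a large box $\partial B^*_{M,N}$ can be joined into a single contour by flipping faces, i.e.\ by exactly the moves permitted under $(A4)$; \Cref{l61} then excludes $2\le t<\infty$ infinite contours by ergodicity, and merging three infinite contours would create a contour with at least three ends, excluding $t=\infty$. Incidentally, the obstruction you cite to this direct route (that parities of boundary crossings prevent merging) is not real --- a single connected contour may cross $\partial B^*_{M,N}$ arbitrarily many times, and the appendix's case analysis is precisely the verification that the merge can always be performed. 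Your proposal would need a genuinely new ingredient to exclude infinite chains of finite contour cycles, which cluster uniqueness alone does not provide.
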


Finally, a random contour configuration $\phi\in \Phi_2$ induces a random
site configuration $\rho$ on $\{0,1\}^{V(\LL_1)}$ as follows. Fix a vertex
$v_0\in V(\LL_1)$, and assume that $\rho(v_0)$ takes value 1 with probability
$\frac{1}{2}$, and takes value $0$ with probability $\frac{1}{2}$,
independent of $\phi$. For any two adjacent vertices $v,w\in V(\LL_1)$, let
$\rho(v)\neq \rho(w)$ if and only if the edge $\langle v,w\rangle$ crosses a
present edge in $\phi$. Let $\lambda_1$ be the measure on
$\{0,1\}^{V(\LL_1)}$, induced by $\nu_2$ in the way described above. We
introduce the following new assumption.
 \begin{enumerate}[label=({A}{\arabic*})]
\setcounter{enumi}{4}
\item $\lambda_1$ is $2\ZZ\times 2\ZZ$-ergodic.
\end{enumerate}

\begin{theorem}\label{m5}Let $\mu$ be a probability measure on the constrained percolation state space $\Omega$, satisfying $(Ak1)$, $(Ak2)$, $(A4)$, $(A5)$, for some positive integer $k$. Then $\mu$-a.s.\ there are no infinite primal contours.
\end{theorem}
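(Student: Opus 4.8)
The plan is to assume the conclusion fails and extract a contradiction from the interplay of the two auxiliary spin systems on $\LL_1$ and $\LL_2$. Since the event that an infinite primal contour exists is invariant under $2k\ZZ\times 2k\ZZ$-translations, assumption $(Ak2)$ forces its probability to be $0$ or $1$; if it is $0$ we are done, so suppose it is $1$. By \Cref{m4} there is then $\mu$-a.s.\ a \emph{unique} infinite primal contour, which I call $\gamma$. Alongside $\rho$ and $\lambda_1$ (whose domain walls are the dual contours), I would introduce the companion site configuration $\rho'$ on $V(\LL_2)$ whose domain walls are the primal contours, defined exactly as $\rho$ but with the roles of $\LL_1$ and $\LL_2$ exchanged. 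Two facts about these systems are used repeatedly. First, $\lambda_1$ is invariant under the $0/1$ exchange $\theta$, because $\rho$ is determined by the dual contours only up to a global flip, which is resolved by the independent fair coin at $v_0$. Second, reading off the six allowed local configurations around a black face shows that a \emph{present primal edge} forces its two endpoints to receive equal $\rho$-value (the crossing dual edge is then absent); symmetrically a present dual edge forces equal $\rho'$-value. One checks likewise that Assumption $(A4)$, the finite-energy condition for $\nu_1$, is precisely single-site finite energy for $\rho'$ (flipping the four sides of a face of $\LL_1$ flips exactly the spin of the $\LL_2$-vertex at its centre).

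The second fact has a crucial consequence: every primal contour is a connected set of agreement edges of $\rho$, hence lies inside a single $\rho$-cluster; symmetrically every dual contour lies inside a single $\rho'$-cluster. In particular the infinite primal contour $\gamma$ lies inside one infinite $\rho$-cluster $\mathcal{C}$. Now I would invoke $(A5)$: since $\lambda_1$ is $2\ZZ\times 2\ZZ$-ergodic and $\theta$-symmetric, the numbers of infinite $\rho$-clusters of colour $0$ and of colour $1$ are each $\mu$-a.s.\ equal to a common constant $m$, and $\mathcal{C}$ forces $m\geq 1$. Thus $\mu$-a.s.\ there exist infinite $\rho$-clusters of \emph{both} colours. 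Two disjoint infinite $\rho$-clusters of opposite colour cannot be separated by a finite edge-set in the planar lattice $\LL_1$, so their interface contains an infinite connected component of dual-contour edges: an infinite dual contour $\delta$. By the symmetric containment fact, $\delta$ lies inside a single infinite $\rho'$-cluster.

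I have thus produced, $\mu$-a.s., an infinite primal contour $\gamma$ and an infinite dual contour $\delta$ that, by the non-crossing condition built into every contour configuration, never cross; moreover $\gamma$ lies inside a single $\rho$-cluster while separating two infinite $\rho'$-clusters, and $\delta$ lies inside a single $\rho'$-cluster while separating two infinite $\rho$-clusters. The plan is to close the argument with a Burton--Keane-type count on the primal contours: using the single-site finite energy for $\rho'$ coming from $(A4)$ together with the translation invariance $(Ak1)$, local surgeries along $\gamma$ should create branch (trifurcation) points of infinite primal contours, and the abundance of infinite $\rho$-clusters of both colours supplied by $(A5)$ should let these surgeries be performed at a positive density of locations in a translation-covariant way; a positive density of trifurcations of infinite primal contours is impossible in $\ZZ^2$, and it would in any case force more than one infinite primal contour, contradicting \Cref{m4}.

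I expect this last step to be the main obstacle. The delicate point is that the two pieces of leverage sit on opposite lattices — finite energy $(A4)$ controls only the primal system $\rho'$, while ergodicity $(A5)$ controls only the dual-wall system $\rho$ — so the counting argument must be transported across the planar duality between $\LL_1$ and $\LL_2$, using the non-crossing and containment relations above to convert the guaranteed coexistence of infinite $\rho$-clusters into genuine branchings of the primal contour network. A secondary but essential technical difficulty is that every local modification must be kept inside the constrained space $\Omega$, that is, it must respect both the even-degree condition and the non-crossing condition simultaneously; verifying that the finite-energy flips of $(A4)$ can be promoted to admissible modifications of the full contour configuration (primal and dual together) is exactly where the combinatorial planar-duality input is really needed.
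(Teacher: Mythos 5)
Your opening moves coincide with the paper's: reduce by $(Ak2)$ and \Cref{m4} to the case of a $\mu$-a.s.\ unique infinite primal contour $C_1$; observe that $C_1$ lies inside a single cluster of the auxiliary configuration $\rho$ on $V(\LL_1)$ (because present primal and dual edges never cross); and use the coin-flip symmetry of $\lambda_1$ together with $(A5)$ to conclude that $\rho$ a.s.\ has infinite clusters of \emph{both} colours, whence an infinite dual contour exists. Up to that point you are on the paper's track.

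The endgame is where the gap is, and it is a real one. First, the trifurcation plan cannot work: by \Cref{att} and \Cref{m4} there is a.s.\ exactly one infinite primal contour and it has at most two ends, so outside any finite box there are at most two infinite branches of primal contour available; no local surgery (finite-energy or otherwise) can therefore manufacture a three-ended infinite primal contour, at a single location or at positive density. Second, and more fundamentally, merely exhibiting one infinite primal contour $\gamma$ and one infinite dual contour $\delta$ that do not cross is not a contradiction --- this is exactly the configuration that occurs in Cases II and III of the proof of \Cref{m2}, and ruling it out is the whole content of the theorem. The paper closes the argument with a much more specific incidence analysis that your proposal does not reach: $(A4)$ plus Burton--Keane (\cite{bk89}) gives \emph{uniqueness} of the infinite $0$-cluster $\eta_0$ and the infinite $1$-cluster $\eta_1$ of $\rho$; \Cref{up} then yields a \emph{unique} infinite dual contour $C_2$ incident to both $\eta_0$ and $\eta_1$; since $C_1$ sits inside one of $\eta_0,\eta_1$ while $C_2$ touches both, some infinite cluster of $\omega$ is incident to both $C_1$ and $C_2$; a symmetry-plus-ergodicity upgrade then forces \emph{both} an infinite $0$-cluster and an infinite $1$-cluster of $\omega$ to be incident to both $C_1$ and $C_2$, which contradicts \Cref{pnp} (two distinct infinite contours cannot both be incident to the same two distinct infinite clusters). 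None of these ingredients --- uniqueness of the $\rho$-clusters, \Cref{up}, or \Cref{pnp} --- appears in your outline, so the proof as proposed does not close.
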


\subsection{Examples of Constrained Percolation Measures}\label{ex}

We present some applications of Theorems \ref{m1}--\ref{m5}
to perfect matchings on the square-octagon lattice, as well
as the XOR Ising model on the square grid.  We will obtain
results about infinite clusters and infinite contours in
these well-known models.

\begin{figure}
 \centering
  \includegraphics[width=0.6\textwidth]{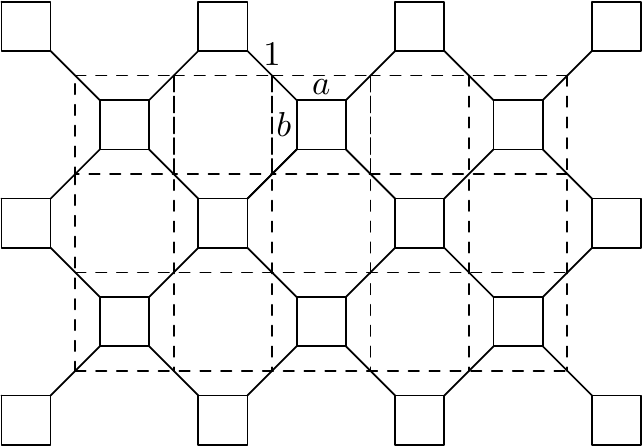}
 \caption{Square-octagon lattice: solid lines are edges of the square-octagon lattice; dashed lines are edges of $G$.}\label{sol}
 \end{figure}

Consider perfect matchings on the square-octagon lattice. See Figure
\ref{sol} for a picture of the square-octagon lattice. Each perfect matching
(or dimer configuration) on the square-octagon lattice is a subset of edges
such that each vertex is incident to exactly one edge in the subset. There
are two types of edges in the lattice: \df{Type-I edges} are edges of the
square faces, and \df{Type-II edges} are edges of the octagonal faces but not
of the square faces. (Type-II edges are diagonal lines in Figure \ref{sol}).

We now connect perfect matchings on the square-octagon
lattice with constrained percolation configurations on $G$.
Recall that $G=(\ZZ^2,E)$ is the square grid, whose faces
are unit squares. We place a vertex of $G$ at the midpoint
of each Type-II edge. A face of $G$ is constructed from the
midpoints of four Type-II edges around a square or the
midpoints of four Type-II edges around an octagon; see
Figure \ref{sol}. If a face of $G$ encloses a square face
of the square-octagon lattice, we color it black.  If a
face of $G$ is enclosed by an octagon face, we color it
white. %See Figure \ref{sol}.

Given a perfect matching of the square-octagon lattice, we may consider its restriction to the set of all Type-II edges. There is a bijective correspondence between such restrictions and site percolation configurations on $\ZZ^2$ in $\Omega$. Specifically,   a vertex of $\ZZ^2$ at the midpoint of a Type-II edge has state 1 in the constrained site configuration in $\Omega$ if and only if the Type-II edge is present in the perfect matching of the square-octagon lattice. It is easy to verify that this is indeed a bijection. A \df{present Type-II cluster} (resp.\ \df{absent Type-II cluster}) of a dimer configuration on the square-octagon lattice is a set of present (resp.\ absent) Type-II edges such that their midpoints form a 1-cluster (resp.\ 0-cluster) of the constrained percolation configuration, given
by the above bijection. Equivalently, Type-II clusters may be defined by considering two Type-II edges to be adjacent if some endpoint of one is adjacent to some endpoint of the other.

In order to define a probability measure for perfect matchings on the square-octagon lattice, we introduce edge weights. We assign weight 1 to each Type-II edge, and weight $w_e$ to the Type-I edge $e$. Assume that the edge weights of the square-octagon lattice satisfy the following conditions.
 \begin{enumerate}[label=({B}{\arabic*})]
\item The edge weights are $2\ZZ\times2 \ZZ$-translation-invariant.
\item If $e_1$, $e_2$ are two Type-I edges around the same square face, such that both $e_1$ and $e_2$ are horizontal, or both of them are vertical, then $w_{e_1}=w_{e_2}$.
\item If $e_1$, $e_2$ are two Type-I edges around the same square face, such that exactly one of $e_1$, $e_2$ is horizontal and the other is vertical, then $w_{e_1}^2+w_{e_2}^2=1$.
\end{enumerate}

The reason we assume (B1) is to define a $2\ZZ\times 2\ZZ$-translation-invariant measure. The reason we assume (B2) and (B3) is to define a measure for dimer configurations of the square-octagon lattice, which, under the connection described above to constrained percolation configurations in $\Omega$, will induce a probability measure on $\Omega$ satisfying the symmetry assumption (A4).

Under (B1)--(B3), the edge weights are described by two independent parameters. We may sometimes assume the stronger translation-invariance condition below, which reduces the parameters to one.
\begin{enumerate}[label=({B}{\arabic*})]
\setcounter{enumi}{3}
\item The edge weights are $\sH$-translation-invariant.
\end{enumerate}

In \cite{KOS06}, the authors define a probability measure for any bi-periodic, bipartite, 2-dimensional lattice. Specializing to our case, let $\mu_{n,D}$ be the probability measure of dimer configurations on a toroidal $n\times n$ square-octagon lattice $S_n$ (see \cite{KOS06} for details). Let $\mathcal{M}_n$ be the set of all perfect matchings on $S_n$, and let $M\in \mathcal{M}_n$ be dimer configuration, then
\begin{eqnarray}
\mu_{n,D}(M)=\frac{\prod_{e\in M}w_{e}}{\sum_{M\in \mathcal{M}_n}\prod_{e\in M}w_e},\label{mnd}
\end{eqnarray}
where $w_e$ is the weight of the edge $e$. As $n\rightarrow\infty$, $\mu_{n,D}$ converges weakly to a translation-invariant measure $\mu_{D}$ (see \cite{KOS06}).

Contours separating present Type-II clusters and absent Type-II clusters are
be defined to be the contours in the corresponding constrained percolation
model.

\begin{theorem}\label{dc}For given edge
weights satisfying $(B1)$--$(B4)$, $\mu_{D}$-a.s.\ there
are no infinite present Type-II clusters or infinite absent
Type-II clusters; morevoer there are no infinite contours.
\end{theorem}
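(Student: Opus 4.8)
The plan is to reduce \Cref{dc} to \Cref{m2} via the bijection, described above, between restrictions to Type-II edges of perfect matchings of the square-octagon lattice and constrained configurations in $\Omega$. Under this bijection the Gibbs measure $\mu_D$ pushes forward to a probability measure $\mu$ on $\Omega$; present (resp.\ absent) Type-II clusters correspond exactly to $1$-clusters (resp.\ $0$-clusters); and the contours separating present from absent Type-II clusters correspond to the contours of $\phi(\omega)$. Consequently it suffices to show that $\mu$ satisfies Assumptions $(A1)$--$(A4)$, for then both conclusions of \Cref{dc} follow at once from parts (1) and (2) of \Cref{m2}.

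The core of the argument is thus the verification of $(A1)$--$(A4)$, which I would carry out one assumption at a time. For $(A1)$, I would note that the \cite{KOS06} construction of $\mu_D$ is equivariant under the symmetries of the weighted square-octagon lattice; since the weights are $\sH$-translation-invariant by $(B4)$, and these symmetries correspond under the bijection to the action of $\sH$ on $G$, the measure $\mu$ is $\sH$-translation-invariant. For $(A2)$, the weaker periodicity $(B1)$ already makes $\mu_D$, and hence $\mu$, invariant under the $2\ZZ\times2\ZZ$ action; the required ergodicity then follows from the fact, established in \cite{KOS06}, that the limiting Gibbs measure is extremal (indeed mixing) under translations, transported through the bijection. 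Assumption $(A3)$ is where $(B2)$ and $(B3)$ enter: integrating out the Type-I edges assigns to each black face a local weight depending only on the four surrounding Type-II states, and a direct computation using $a:=w_t=w_b$, $c:=w_l=w_r$ (by $(B2)$) and $a^2+c^2=1$ (by $(B3)$) shows these local weights are invariant under $0\leftrightarrow1$ exchange, the complementary pairs $(1111,0000)$, $(1001,0110)$, $(0011,1100)$ receiving weights $(1,1)$, $(a,a)$, $(c,c)$ respectively. Since $\theta$ therefore corresponds to a measure-preserving involution of the dimer ensemble, $\mu$ is $\theta$-invariant.

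For $(A4)$ I would invoke the finite-energy property of the dimer Gibbs measure: flipping the four edges of a face $S$ of $\LL_1$ corresponds to a bounded rearrangement of dimers in a finite region, and because $\mu_D$ assigns positive conditional probability to every locally admissible pattern given its complement (with $(B2)$--$(B3)$ guaranteeing $a,c>0$ so the relevant rearrangement has strictly positive weight), the event $E_S$ has positive $\nu_1$-probability whenever $E$ does. With $(A1)$--$(A4)$ in hand, \Cref{m2} gives that $\mu$-a.s.\ there are no infinite clusters and no infinite primal or dual contours; translating back through the bijection yields the absence of infinite present and absent Type-II clusters and of infinite contours, as claimed.

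The step I expect to be the main obstacle is the ergodicity in $(A2)$: one must correctly match the translation action on $G$ with that on the square-octagon lattice and invoke the extremality/mixing of the \emph{specific} Gibbs measure $\mu_D$, rather than merely its translation invariance. By contrast, the symmetry computation for $(A3)$ is routine once the local weights are tabulated, and $(A4)$ reduces to the standard finite-energy property of dimer measures; pinning down ergodicity of $\mu$ from the dimer side is the least automatic part of the verification.
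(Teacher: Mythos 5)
Your overall strategy -- push $\mu_D$ through the Type-II bijection to a measure on $\Omega$, verify (A1)--(A4), and invoke \Cref{m2} -- is exactly the paper's, and your treatments of (A1), (A2) (via the mixing of $\mu_D$ from \cite{KOS06}, which is the content of \Cref{ed}) and (A3) (the local weight computation around a square, using $w_{e_1}^2+w_{e_2}^2=1$ to match the $0000$ and $1111$ weights) coincide with what the paper does. You flagged (A2) as the delicate step, but that part goes through essentially as you describe.

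The genuine gap is in your verification of (A4). You assert that flipping the four edges of a face $S$ of $\LL_1$ ``corresponds to a bounded rearrangement of dimers in a finite region,'' but this is not true as a local move, because of the non-crossing constraint between primal and dual contours. The four sides of $S$ cross the four black faces adjacent to the white face $W$ at the center of $S$, and the natural candidate move -- flipping the four Type-II edges around the octagon $W$ -- takes a black face with configuration of type $(0110)$ (read so that the first two vertices lie on $W$) to the forbidden alternating configuration $(1010)$. This happens precisely when a \emph{dual} contour edge crosses that black face; in that situation no modification of the dimer configuration can realize $\phi_S$ without also altering the dual contour configuration, and repairing the even-degree condition of the dual configuration can force further changes whose extent depends on the ambient primal contours, so there is no a priori bounded region in which the rearrangement takes place. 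Since (A4) concerns the marginal $\nu_1$ on primal contours only, the statement is still true, but your argument does not establish it. The paper circumvents this by invoking the identification from \cite{bd14} of the law of the primal contours with the contour law of an XOR Ising model on $\LL_2$ with coupling constants given by \eqref{we}: flipping $\partial S$ is then a single XOR spin flip, and (A4) follows from the elementary finite-energy property of the Ising model. To repair your proof you should either route (A4) through this correspondence as the paper does, or supply a genuinely local construction of a dimer rearrangement realizing $\phi\mapsto\phi_S$ on the primal marginal, which is not available in the naive form you propose.
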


Without (B4), we have a weaker conclusion.

\begin{theorem}\label{m17}For given edge weights satisfying $(B1)$--$(B3)$, $\mu_{D}$-a.s.\ there is at most one infinite contour.
\end{theorem}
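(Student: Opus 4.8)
The plan is to place $\mu_D$ inside the constrained-percolation framework and then reduce to \Cref{m4} together with its dual. Recall from the construction above that restricting a square-octagon perfect matching to the Type-II edges gives, via the stated bijection, a configuration in $\Omega$; let $\mu$ denote the push-forward of $\mu_D$ under this map. First I would verify that $\mu$ meets the hypotheses of \Cref{m4}. Condition (B1) makes the weights $2\ZZ\times 2\ZZ$-periodic, so the translation-invariance of $\mu_D$ descends to $2k\ZZ\times 2k\ZZ$-invariance of $\mu$, i.e.\ (Ak1), for a suitable $k$; since $\mu_D$ is an extremal (mixing) limiting Gibbs measure, $\mu$ inherits the ergodicity (Ak2) for that $k$; and, as already recorded in the text, (B2)--(B3) are precisely what force the marginal $\nu_1$ on $\LL_1$ to have the finite-energy property (A4). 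Because (B2)--(B3) treat the horizontal and vertical Type-I edges around each square symmetrically, the identical computation yields finite energy for the marginal $\nu_2$ on $\LL_2$ as well.

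With these in hand, \Cref{m4} applied to $\nu_1$ gives that $\mu_D$-a.s.\ there is at most one infinite primal contour, and the same argument applied to $\nu_2$ (legitimate since $\nu_2$ also has finite energy) gives at most one infinite dual contour. It therefore remains only to exclude the simultaneous occurrence of an infinite primal and an infinite dual contour. For this I would note that the events $\{\exists\text{ an infinite primal contour}\}$ and $\{\exists\text{ an infinite dual contour}\}$ are translation-invariant, so by (Ak2) (which implies (A2)) each has probability $0$ or $1$. If coexistence is impossible, then at least one type of infinite contour is a.s.\ absent, and the surviving type contributes at most one infinite contour by the previous step; this is exactly the assertion of \Cref{m17}.

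The exclusion of coexistence is the heart of the matter and the step I expect to be the main obstacle; it is here that planar duality between $\LL_1$ and $\LL_2$ must be used, since present primal and present dual edges never cross. Suppose for contradiction that $\mu$-a.s.\ there is both an infinite primal contour $C_1\subseteq\LL_1$ and an infinite dual contour $C_2\subseteq\LL_2$; by the preceding paragraph each is the unique one, hence a translation-equivariant function of the configuration, and the two are disjoint and non-crossing. The mechanism I would aim for is a separation argument: tracing along an infinite contour, the cells on a fixed side lie in a single cluster, so $C_1$ is flanked by an infinite $0$-cluster and an infinite $1$-cluster, and $C_1$ separates the plane so that these two clusters sit in its two complementary regions. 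The non-crossing $C_2$ is confined to one such region $H$; the $0$-cluster flanking $C_1$ lies in the other region $H'$; but $C_2$, being an infinite interface inside $H$, is flanked by its own infinite $0$-cluster, necessarily contained in $H$ and hence distinct from the one in $H'$. Coexistence thus produces two distinct infinite $0$-clusters, and the desired contradiction is a uniqueness statement for the infinite cluster.

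The delicate points, which I expect to absorb most of the effort, are exactly where this sketch is not yet rigorous. First, contours may have degree-$4$ vertices (Figure~\ref{spc}), so the ``fixed side is a single cluster'' and ``two-ended, plane-separating'' bookkeeping must be justified rather than assumed, using the combinatorial structure of $\Phi$. Second, and more seriously, the finite-energy assumption (A4) lives only at the level of the $\LL_1$-marginal $\nu_1$: flipping the four edges of an $\LL_1$-face keeps one in $\Phi_1$ but need not correspond to any element of $\Omega$ (it can violate the non-crossing constraint with $\LL_2$), so a Burton--Keane uniqueness for $0$- and $1$-clusters is not directly available and must instead be extracted from the joint non-crossing structure of $\nu$ via the paper's planar-duality techniques. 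Carrying out this final uniqueness step under only (B1)--(B3), without the symmetry that makes \Cref{dc} available, is the crux.
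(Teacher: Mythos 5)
Your reduction to \Cref{m4} for each of the two marginals $\nu_1$ and $\nu_2$ is fine as far as it goes, but the proof is not complete: everything hinges on excluding coexistence of one infinite primal and one infinite dual contour, and you explicitly leave that step as a sketch. The sketch does not close. It needs (i) that the infinite primal contour $C_1$ lies in $\mathcal{C}_2$, i.e.\ that $\RR^2\setminus C_1$ has \emph{two} unbounded components (it could instead lie in $\mathcal{C}_1$, in which case it does not separate its flanking clusters into different regions), and (ii) uniqueness of the infinite $0$-cluster of the constrained configuration. Neither is available under (B1)--(B3): without (B4) the measure is only $2\ZZ\times 2\ZZ$-invariant, so (A1) fails and \Cref{m1,m2} cannot be invoked; and, as you note yourself, site-level finite energy is forbidden by the constraint, so Burton--Keane does not apply to the $0$- and $1$-clusters. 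The step you call ``the crux'' is indeed the whole difficulty, and it is left unproved.

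The paper avoids this impasse entirely by a different mechanism: Kramers--Wannier duality. The primal and dual contour ensembles are the contour ensembles of two mutually \emph{dual} XOR Ising models (on $\LL_2$ and $\LL_1$ respectively). Given (B1)--(B3), either both are critical, in which case \Cref{cxor} gives no infinite contours at all; or one is high-temperature and the other low-temperature, in which case \Cref{uc} (essentially your application of \Cref{m4}) bounds the non-critical one by at most one infinite contour, while \Cref{nicl} shows the low-temperature one has \emph{no} infinite contours. The coexistence question never arises, because one of the two families is outright empty of infinite contours; the input replacing your missing uniqueness step is \Cref{m5} together with the ergodicity hypothesis (A5), verified via strong mixing of the high-temperature Ising model (\Cref{hle}). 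To salvage your route you would have to prove the coexistence exclusion from scratch, which essentially amounts to reproving \Cref{nicl}.
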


Next, we discuss the XOR Ising model. Consider an Ising model with spins
located on vertices of the dual square grid $\LL_2$. Assume each edge $e$ of
$\LL_2$ has coupling constant $J_e>0$. In order to make the connection to the
dimer model, note that $e$ crosses exactly one square face $S_e$ of the
square-octagon lattice; see Figure \ref{sol}. Let $e'$ be either of the two
sides of $S_e$ parallel to $e$. Assume the coupling constant $J_e$ and the
edge weight $w_{e'}$ satisfy the following identity
\begin{eqnarray}
w_{e'}=\frac{2\exp(-2J_e)}{1+\exp(-4J_e)}\label{we}
\end{eqnarray}
When $w_e\in(0,1)$, there is a unique $J_e>0$ satisfying identity (\ref{we}).

The XOR Ising model (see \cite{DBW11}) is a random spin
configuration on $\LL_2$ given by
\begin{eqnarray*}
\sigma_{XOR}(v)=\sigma_1(v)\sigma_2(v), \quad v\in V(\LL_2)
\end{eqnarray*}
where $\sigma_1$, $\sigma_2$ are two independent Ising models on vertices of
$\LL_2$, taking values in $\{\pm 1\}^{V(\LL_2)}$. Assume both $\sigma_1$ and
$\sigma_2$ have coupling constants given by (\ref{we}), and both $\sigma_1$
and $\sigma_2$ are sampled according to the law of the Ising model obtained
as the weak limit under periodic boundary conditions; see \cite{ZL12}. A
\df{``$+$''-cluster} (resp.\ \df{``$-$''-cluster}) of an XOR Ising
configuration, whose spins are located on vertices of $\LL_2$, is a maximal
connected set of vertices of $\LL_2$ in which every spin has state ``$+$''
(resp.\ ``$-$'') in $\sigma_{XOR}$. Similarly we can define an XOR Ising
model with spins located on vertices of $\LL_1$.

A \df{contour configuration} for an XOR Ising configuration, $\sigma_{XOR}$,
defined on $\LL_2$ (resp.\ $\LL_1$), is a subset of $\{0,1\}^{E(\LL_1)}$
(resp.\ $\{0,1\}^{E(\LL_2)}$), whose state-1-edges (present edges) are edges
of $\LL_1$ (resp.\ $\LL_2$) separating neighboring vertices of $\LL_2$ (resp.
$\LL_1$) with different states of $\sigma_{XOR}$. (Recall that $\LL_1$ and
$\LL_2$ are planar duals of each other.)  Contour configurations of the XOR
Ising model were first studied in \cite{DBW11}, in which the scaling limits
of contours of the critical XOR Ising model are conjectured to be level lines
of Gaussian free field. It is proved in \cite{bd14} that the contours of the
XOR Ising model on the square grid correspond to level lines of height
functions of the dimer model on the square-octagon lattice, inspired by the
correspondence between Ising model and bipartite dimer model in \cite{Dub}.
We will study  the percolation properties of the XOR Ising model, as an
application of the main theorems proved in this paper for the general
constrained percolation process.

Before stating the results on the percolation properties of the XOR Ising model, we identify the critical and non-critical phases for the family of XOR Ising models under consideration. Consider an Ising model, with spins located on vertices of $\LL_2$ and coupling constants obtained from dimer edge weights of the square-octagon lattice by (\ref{we}), such that the dimer edge weights satisfy Assumptions (B1)--(B3). Under the translation-invariance assumption (B1), the Ising model obtained above has the same coupling constant on all the horizontal edges (denoted by $J_h$), and the same coupling constant on all the vertical edges (denoted by $J_v$).

Let
\begin{eqnarray}
F(x,y):=\exp(-2x)+\exp(-2y)+\exp(-2x-2y).\label{f}
\end{eqnarray}
An Ising model on the square grid with coupling constants $J_h\geq 0$ on each horizontal edge and $J_v\geq 0$ on each vertical edge is said to be \df{critical} if
\begin{eqnarray}
F(J_h,J_v)=1.\label{ci}
\end{eqnarray}
The Ising model is said to be in the \df{low temperature state} if
\begin{eqnarray}
F(J_h,J_v)<1.\label{li}
\end{eqnarray}
The Ising model is said to be in the \df{high temperature state} if
\begin{eqnarray}
F(J_h,J_v)>1.\label{hii}
\end{eqnarray}
It is known that in the high temperature state, the Ising model has a unique Gibbs measure, and the spontaneous magnetization vanishes; while in the low temperature state, the Gibbs measures are not unique and the spontaneous magnetization is strictly positive under the ``$+$''-boundary condition. See  \cite{JL72, Ai80,ZL12}.

 We claim that if the dimer edge weights also satisfy (B4), then the Ising model has critical coupling constants; otherwise the Ising model has non-critical coupling constants.
See \cite{ZL12}. It is straightforward to check that given (\ref{we}) and (B1)--(B3), (\ref{ci}) is equivalent to (B4).

We define the \df{critical XOR Ising model} (resp.\ \df{non-critical XOR Ising model}) to be one obtained from the product of two independent critical Ising models on a square grid such that
\begin{enumerate}[label=\Roman*.]
\item each Ising model has coupling constants $J_h$ on horizontal edges, and $J_v$ on vertical edges, such that $J_h$, $J_v$ satisfy (resp.\ do not satisfy) (\ref{ci});
\item each Ising model has a probability measure that is the weak limit of measures on finite graphs with periodic boundary conditions.
\end{enumerate}

 \begin{theorem}\label{cxor}For the critical XOR-Ising model as defined above,
\begin{enumerate} [label=\Roman*.]
\item almost surely there are no infinite \text{``$+$''}-clusters, and no infinite \text{``$-$''}-clusters;
\item almost surely there are no infinite contours.
 \end{enumerate}
 \end{theorem}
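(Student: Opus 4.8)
The plan is to realize the critical XOR Ising model as the contour configuration of a constrained percolation measure $\mu$ on $\Omega$ coming from the dimer model on the square-octagon lattice, and then to read off both conclusions from \Cref{m2} and \Cref{m3}. Starting from the two independent critical Ising models on $\LL_2$ whose product is $\sigma_{XOR}$, I pass through the Ising-to-dimer correspondence of \cite{bd14} (building on \cite{Dub}): the contours of $\sigma_{XOR}$, which live on $E(\LL_1)$ and separate neighboring $\LL_2$-spins of opposite sign, are exactly the level lines of the height function of the dimer model $\mu_D$ on the square-octagon lattice with edge weights given by (\ref{we}). Under the Type-II edge bijection described above, $\mu_D$ induces a measure $\mu$ on $\Omega$, and the primal contours of $\mu$ coincide with the XOR Ising contours. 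It therefore suffices to verify that $\mu$ satisfies $(A1)$--$(A4)$ and then translate the two assertions back into XOR language.

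Verification of the hypotheses proceeds as follows. Since both Ising models are critical and homogeneous, their common coupling constants $(J_h,J_v)$ satisfy (\ref{ci}); by the equivalence of (\ref{ci}) with $(B4)$ recorded above, the associated dimer weights satisfy $(B1)$--$(B4)$. Then $(B4)$ gives $\sH$-invariance of the weights, hence of $\mu_D$ and of $\mu$, which is $(A1)$. The induced dimer measure is the weak limit of periodic-boundary measures of \cite{KOS06}, which is extremal and mixing, hence $2\ZZ\times2\ZZ$-ergodic; this gives $(A2)$. Conditions $(B2)$--$(B3)$ are exactly what make the $0/1$ exchange $\theta$ a weight-preserving symmetry of the induced configuration (equivalently, the global flip $\sigma_{XOR}\mapsto-\sigma_{XOR}$ inherited from the symmetry of the critical Ising measures), giving $(A3)$. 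Finally, all dimer weights are strictly positive, so the move that flips the four Type-II edges around a square face — which corresponds to flipping the four sides of a face $S$ of $\LL_1$ in $(A4)$ — alters the weight by a bounded positive factor, yielding the finite-energy condition $(A4)$ for $\nu_1$.

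With $(A1)$--$(A4)$ in hand, Part II is immediate: by \Cref{m2}(1) there are $\mu$-a.s.\ no infinite primal contours (and no infinite dual contours), and since the XOR Ising contours are precisely the primal contours, there are a.s.\ no infinite XOR Ising contours. (One may equally quote the no-infinite-contour conclusion of \Cref{dc}, which applies since $(B1)$--$(B4)$ hold.)

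For Part I the main work is to match the ``$+$''/``$-$'' clusters on $\LL_2$ with an object controlled by the general theorems. Writing $\rho=\sigma_{XOR}$ for the spin field on $V(\LL_2)$, two neighboring vertices $v,w\in V(\LL_2)$ satisfy $\rho(v)\neq\rho(w)$ exactly when the unique primal edge crossing the $\LL_2$-edge $\langle v,w\rangle$ is present; hence each ``$+$''- or ``$-$''-cluster is a connected component of $\LL_2$ with all edges crossed by $\phi_1$ deleted. Viewing the union $\phi_1$ of primal contours as a locally finite family of curves in the plane, the present edges partition the plane into open regions; a path in $G\setminus\phi_1$ and a path in $\LL_2$ with the crossed edges removed are each confined to a single region, and conversely vertices of $G$ (resp.\ of $\LL_2$) lying in a common region are joined within $G\setminus\phi_1$ (resp.\ within $\LL_2$ after deletion). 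Consequently an infinite XOR Ising cluster would force an infinite component of $G\setminus\phi_1$, which is excluded by \Cref{m3}, proving Part I. The genuinely delicate point is this planar-region identification: because a primal contour may have four edges meeting at one vertex, one must invoke the non-crossing property and the even-degree condition defining $\Phi_1$ to check that the self-touching of a contour does not merge regions distinguished by $\rho$, so that the correspondence between infinite XOR clusters and infinite components of $G\setminus\phi_1$ is exact.
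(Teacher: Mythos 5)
Your overall strategy coincides with the paper's: pass through the correspondence of \cite{bd14} and the Type-II edge bijection, verify (A1)--(A4) for the induced measure on $\Omega$, and then quote \Cref{m2} for Part II and \Cref{m3} for Part I. The extra care you take in matching infinite components of $\LL_2\setminus\phi_1$ with infinite components of $G\setminus\phi_1$ is welcome (the paper leaves this implicit); in fact only one direction is needed, and it follows directly from the observation that the four corners of a white face of $G$ centered at an $\LL_2$-vertex are joined by edges of $G$ that no primal contour ever crosses, and that two such quadruples at adjacent $\LL_2$-vertices are joined through the intervening black face whenever the primal edge crossing it is absent.

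The gap is in your verification of (A4). You claim the move $\phi\mapsto\phi_S$ of (A4) is realized by ``flipping the four Type-II edges around a square face,'' with cost controlled by positivity of the dimer weights. That move is not admissible: the four Type-II edges around a square face of the square-octagon lattice have midpoints at the four sites of a \emph{black} face of $G$, and flipping all four changes the parity of the number of $1$'s around each of the four diagonally adjacent black faces, so the result leaves $\Omega$ (equivalently, each of those Type-II edges has a second endpoint on a neighboring square whose matching count becomes odd). The move that does correspond to flipping $\partial S$ is flipping the four sites of the \emph{white} face of $G$ centered at the center of $S$, as in \Cref{ss} --- but that is only a legal move in $\Omega$ when those four sites already share a common state, i.e.\ when no dual contour passes through the center of $S$. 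When a dual edge through one of the four surrounding black faces is present, the primal edge through that face cannot be switched on at all without first rerouting the dual contour, so there is in general no local modification of the dimer configuration that realizes $\phi\mapsto\phi_S$ on the primal marginal. This is exactly why the paper verifies (A4) not from the dimer weights but from the XOR Ising representation: by \cite{bd14} the primal marginal $\nu_1$ is the law of the XOR contour configuration, $\phi\mapsto\phi_S$ is the flip of a single XOR spin, and the finite energy of the two underlying Ising models gives the required absolute continuity. Since you already invoke \cite{bd14}, the fix is available within your own argument: route (A4) through the spin flip rather than through a dimer-edge flip.
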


  Now let us turn to the non-critical XOR Ising model.

 \begin{theorem}\label{uc}The non-critical XOR Ising model, as defined above, almost surely has at most one infinite contour.
\end{theorem}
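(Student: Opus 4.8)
The plan is to deduce Theorem \ref{uc} from Theorem \ref{m4} by transporting the non-critical XOR Ising model into the constrained percolation framework. First I would recall the chain of correspondences set up above: the product of two independent Ising models on $\LL_2$ with coupling constants given by (\ref{we}) is encoded by a single dimer model on the square-octagon lattice whose edge weights satisfy (B1)--(B3) (the correspondence of \cite{bd14}, building on \cite{Dub}), and the dimer model in turn induces, via the Type-II-edge bijection, a probability measure $\mu$ on the constrained percolation state space $\Omega$, with an associated contour measure. The non-critical hypothesis is precisely that (B4) fails, equivalently that $F(J_h,J_v)\neq 1$. Under this chain of identifications the contours of $\sigma_{XOR}$ on $V(\LL_2)$ become exactly the primal contours of $\mu$, that is, the present edges of $\LL_1$. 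It therefore suffices to prove that $\mu$-a.s.\ there is at most one infinite primal contour.

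Next I would check that $\mu$ satisfies the hypotheses $(Ak1)$, $(Ak2)$, $(A4)$ of Theorem \ref{m4} with $k=1$. The invariance $(Ak1)$, here $2\ZZ\times 2\ZZ$-translation-invariance, follows from (B1): the weights, hence $\mu_D$ and the induced $\mu$, are invariant under the shifts $(2,0)$ and $(0,2)$, which preserve the black/white coloring of the faces of $G$. The finite-energy assumption $(A4)$ for $\nu_1$ is exactly what (B2)--(B3) were introduced to guarantee, and these hold whether or not (B4) does. For the ergodicity $(Ak2)$, here $2\ZZ\times 2\ZZ$-ergodicity, I would use that $\mu_D$ is the weak limit of the toroidal Gibbs measures of \cite{KOS06}, an ergodic and indeed mixing Gibbs measure; mixing under the full period group implies ergodicity under the finite-index subgroup $2\ZZ\times 2\ZZ$, and since $\mu$ is a translation-equivariant measurable factor of $\mu_D$ it inherits this ergodicity. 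Note that here we have only $2\ZZ\times 2\ZZ$-invariance and not the stronger $\sH$-invariance (A1), which would require (B4); this is precisely why the non-critical case must be handled through Theorem \ref{m4}, giving ``at most one,'' rather than through Theorem \ref{m2}, which would give ``none.''

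With $(Ak1)$, $(Ak2)$ and $(A4)$ verified for $k=1$, Theorem \ref{m4} gives that $\mu$-a.s.\ there is at most one infinite primal contour. Reading this back through the identification of the first paragraph shows that the non-critical XOR Ising model almost surely has at most one infinite contour, which is the assertion of Theorem \ref{uc}.

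The step I expect to be the main obstacle is not the invocation of Theorem \ref{m4}, which is immediate once the hypotheses are in place, but rather making the two transfers rigorous. Concretely: (i) verifying that the domain walls of $\sigma_{XOR}$ on $V(\LL_2)$ correspond edge-for-edge, under the composition of the \cite{bd14} height-function correspondence with the Type-II-edge bijection, to the present edges of $\LL_1$, so that ``infinite XOR contour'' and ``infinite primal contour of $\mu$'' genuinely refer to the same object; and (ii) confirming that the ergodicity, extremality, and mixing of $\mu_D$ survive the passage to the constrained percolation factor $\mu$, so that $(Ak2)$ truly holds in the non-critical regime.
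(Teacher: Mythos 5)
Your proposal is correct and follows essentially the same route as the paper: the non-critical XOR Ising contours are identified with the primal contours of the constrained percolation measure induced via the square-octagon dimer model, the hypotheses $(Ak1)$, $(Ak2)$, $(A4)$ with $k=1$ are verified from (B1)--(B3) and the mixing of $\mu_D$, and Theorem \ref{m4} is applied. Your added remark on why the failure of (B4) forces the use of Theorem \ref{m4} rather than Theorem \ref{m2} is a correct reading of the paper's structure.
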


An XOR Ising model $\sigma_{XOR}=\sigma_1\sigma_2$ is said to be in the \df{low temperature state} (resp.\ \df{high temperature state}) if both $\sigma_1$ and $\sigma_2$ are in the low temperature state (resp.\ high temperature state). Recall that both $\sigma_1$ and $\sigma_2$ have the same parameters $J_h$ and $J_v$.

\begin{theorem}\label{nicl}In the low temperature XOR Ising model, almost surely there are no infinite contours.
\end{theorem}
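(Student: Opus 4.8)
The plan is to realize the low-temperature XOR Ising model as the primal-contour marginal of a constrained percolation measure and then to invoke Theorem \ref{m5}. Concretely, the XOR Ising model $\sigma_{XOR}=\sigma_1\sigma_2$ on $\LL_2$ arises from the dimer measure $\mu_D$ on the square-octagon lattice, and hence---exactly as in the proof of Theorem \ref{uc}---induces a measure $\mu$ on $\Omega$ satisfying $(Ak1)$, $(Ak2)$ and $(A4)$ (with $k=1$, using $(B1)$). Under this correspondence the contours of $\sigma_{XOR}$ are precisely the primal contours (edges of $\LL_1$). Thus it suffices to prove that for this $\mu$ there are a.s.\ no infinite primal contours, and the only ingredient missing relative to Theorem \ref{m4} (which already gives at most one) is the ergodicity hypothesis $(A5)$.

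First I would identify the induced site measure $\lambda_1$ on $V(\LL_1)$. By its definition $\lambda_1$ is the law of the spin configuration whose domain walls are the dual contours; under the dimer--XOR dictionary these dual contours are the domain walls of the dual XOR Ising model living on $V(\LL_1)$, so $\lambda_1$ is, up to an independent global sign, the law of that dual XOR Ising model. Here the low-temperature hypothesis enters decisively: since $\sigma_{XOR}$ on $\LL_2$ is at low temperature, i.e.\ $F(J_h,J_v)<1$, Kramers--Wannier duality places the two Ising factors on $\LL_1$ in the high-temperature regime $F>1$.

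Next I would verify $(A5)$, that $\lambda_1$ is $2\ZZ\times 2\ZZ$-ergodic. Each high-temperature Ising factor has a unique Gibbs measure with exponentially decaying correlations, hence is mixing---a fortiori ergodic---under the period lattice $2\ZZ\times 2\ZZ$ of the coupling constants supplied by $(B1)$. The product of two independent mixing measures is mixing, and $\lambda_1$ is the image of this product under the ($2\ZZ\times 2\ZZ$-equivariant) XOR map, together with an independent global sign. Since the high-temperature XOR measure has vanishing magnetization it is already invariant under the global sign flip, so randomizing the sign yields the same (ergodic) measure rather than a nontrivial mixture. Hence $\lambda_1$ is ergodic and $(A5)$ holds.

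With $(Ak1)$, $(Ak2)$, $(A4)$ and $(A5)$ all in force, Theorem \ref{m5} applies and yields that $\mu$-a.s.\ there are no infinite primal contours; equivalently, the low-temperature XOR Ising model a.s.\ has no infinite contours. The main obstacle is the middle step: making the identification of $\lambda_1$ with the dual XOR Ising model precise, and confirming---via Kramers--Wannier duality applied to the periodic couplings of $(B1)$--$(B3)$---that low temperature on $\LL_2$ forces the high-temperature (hence ergodic, sign-symmetric) regime on $\LL_1$. It is worth noting why this scheme does not simply give absence in all non-critical cases: applying it to the \emph{dual} contours would require ergodicity of a \emph{low}-temperature XOR measure, which fails, since such a measure is a nontrivial mixture $\tfrac12(\pi+\theta_*\pi)$ of two oppositely magnetized ergodic components. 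This asymmetry is exactly why the high-temperature side of Theorem \ref{uc} yields only uniqueness, whereas the low-temperature side yields absence.
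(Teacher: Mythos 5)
Your proposal follows essentially the same route as the paper's proof: realize the low-temperature XOR contours as the primal contours of a dimer-induced constrained percolation measure satisfying $(Ak1)$, $(Ak2)$, $(A4)$, identify the induced site measure $\lambda_1$ on $V(\LL_1)$ with the dual, high-temperature XOR Ising model to obtain $(A5)$ via its strong mixing (the paper's \Cref{hle}), and conclude by \Cref{m5}. The only cosmetic difference is that you invoke Kramers--Wannier duality by name where the paper verifies $F(K_h,K_v)>1$ directly from the dimer edge weights.
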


For the high-temperature XOR Ising model, we can prove the existence of a unique infinite contour in sufficiently high temperature as follows.

\begin{theorem}\label{ehel}Let $p_c$ be the critical probability for the i.i.d Bernoulli site percolation on the square grid. Note that $p_c>\frac{1}{2}$. Let $h_0>0$ satisfy
\begin{eqnarray*}
\frac{e^{h_0}}{e^{h_0}+e^{-h_0}}=p_c.\label{h0}
\end{eqnarray*}
Consider a high-temperature XOR Ising model on the square grid, in which each horizontal edge has coupling constant $J_h\geq 0$, and each vertical edge has coupling constant $J_v\geq 0$ satisfying (\ref{hii}). If $2(J_h+J_v)<h_0$, then almost surely
\begin{enumerate}[label=\Roman*.]
\item there are no infinite ``$+$''-clusters or infinite ``$-$''-clusters;
\item there exists exactly one infinite contour.
\end{enumerate}
Moreover, let $J_h'>0$, $J_v'>0$ be obtained from $J_h$, $J_v$ by
\begin{eqnarray}
\exp(-2J_h)+\exp(-2J_v')+\exp(-2J_h-2J_v')=1;\label{da1}\\
\exp(-2J_h)+\exp(-2J_v')+\exp(-2J_h-2J_v')=1.\label{da2}
\end{eqnarray}
If we assign the coupling constant $J_h'$ to each horizontal edge of the square grid, and $J_v'$ to each vertical edge, then we obtain a low-temperature XOR Ising model in which the total number of infinite ``$+$''-clusters and ``$-$''-clusters is exactly one almost surely.
\end{theorem}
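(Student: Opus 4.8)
The plan is to treat conclusions (I) and (II) by pairing a stochastic-domination estimate — which is what the hypothesis $2(J_h+J_v)<h_0$ buys us — with the planar duality between clusters and contours.

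First I would prove (I). For a single Ising model $\sigma$ on $\LL_2$ with couplings $J_h,J_v$, each vertex has two horizontal and two vertical neighbours, so the local specification gives, for \emph{every} boundary condition,
\[
\frac{e^{-2(J_h+J_v)}}{e^{2(J_h+J_v)}+e^{-2(J_h+J_v)}}
\le \mathbb{P}\bigl(\sigma(v)=+\mid \sigma|_{\LL_2\setminus\{v\}}\bigr)
\le \frac{e^{2(J_h+J_v)}}{e^{2(J_h+J_v)}+e^{-2(J_h+J_v)}}=:u .
\]
Since $\sigma_1,\sigma_2$ are independent, conditioning on both fields off $v$ makes $\sigma_1(v),\sigma_2(v)$ conditionally independent with $+$-probabilities $p_1,p_2\in[1-u,u]$, so
\[
\mathbb{P}\bigl(\sigma_{XOR}(v)=+\mid(\sigma_1,\sigma_2)|_{\LL_2\setminus\{v\}}\bigr)
=p_1p_2+(1-p_1)(1-p_2)\le u ,
\]
the bound because this affine-in-each-variable function is maximized at the corner $(u,u)$, where its value $u^2+(1-u)^2\le u$ for $u\in[\tfrac12,1]$. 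Averaging over $(\sigma_1,\sigma_2)|_{\ne v}$ conditionally on $\sigma_{XOR}|_{\ne v}$ shows $\mathbb{P}(\sigma_{XOR}(v)=+\mid \sigma_{XOR}|_{\ne v})\le u$, and the $+\leftrightarrow-$ symmetry of the XOR model gives the same for the state $-$. Now $2(J_h+J_v)<h_0$ is \emph{exactly} $u<p_c$; since the bound holds for the full conditional, a site-by-site (lossless) coupling dominates the $+$-sites of $\sigma_{XOR}$ above by an i.i.d.\ Bernoulli$(u)$ field, whose subcriticality forbids infinite $+$-clusters, and symmetry forbids infinite $-$-clusters. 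This is (I).

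For (II), ``at most one infinite contour'' is immediate from \Cref{uc}, since $F(J_h,J_v)>1$ is non-critical. For existence I would reuse the two-sided bound: it also gives $\mathbb{P}(\sigma_{XOR}(v)=+\mid\cdot)\ge 1-u$, and since $1-u>1-p_c$, the critical density for diagonal ($*$-)site percolation on $\ZZ^2$, a lower site-by-site coupling makes the $+$-sites dominate a supercritical Bernoulli$(1-u)$ field; hence a.s.\ there is an infinite $+$ $*$-cluster, and symmetrically an infinite $-$ $*$-cluster. Given (I) (no infinite nearest-neighbour cluster of either sign) together with infinite $*$-clusters of both signs, planar duality forces an infinite contour: two unbounded, disjoint-interior $*$-connected regions of opposite sign must share an unbounded interface. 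With \Cref{uc} this yields exactly one infinite contour, proving (II).

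The main obstacle is precisely this last step — producing the infinite contour from the cluster picture — and the dual uniqueness needed below. I would organize it around the $2$D dichotomy for ergodic finite-energy colourings: exactly one of \{infinite $+$ nearest-neighbour cluster, infinite $-$ nearest-neighbour cluster, infinite contour\} occurs. Its delicate inputs are a Burton–Keane argument for uniqueness of each infinite $*$-cluster and a Gandolfi–Keane–Russo-type no-coexistence statement for opposite-sign nearest-neighbour clusters; both must be re-examined here, since the XOR measure is \emph{not} monotone and only $2\ZZ\times2\ZZ$-invariant. The saving feature is that the model retains reflection symmetry across the coordinate axes even when $J_h\neq J_v$, which is what lets no-coexistence go through. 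Turning the crossing/Jordan-curve heuristic into a proof — using uniqueness of the infinite $*$-clusters to force long contour crossings of growing annuli and gluing them through \Cref{uc} — is the part I expect to be hardest, and it is exactly where the paper's planar-duality combinatorics must enter.

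Finally, for the ``moreover'' clause I would argue by Kramers–Wannier duality. The relations (\ref{da1})--(\ref{da2}) send the high-temperature couplings $(J_h,J_v)$ with $F>1$ to dual couplings $(J_h',J_v')$ with $F(J_h',J_v')<1$, i.e.\ to a genuine low-temperature XOR model. By \Cref{nicl} that model has no infinite contour, so by the same dichotomy (applied within each ergodic component of the, possibly non-ergodic, symmetric low-temperature measure) exactly one sign carries an infinite nearest-neighbour cluster, and uniqueness makes it the only infinite cluster. Thus the contour count of the high-temperature model and the total cluster count of its low-temperature dual are both equal to one, as claimed.
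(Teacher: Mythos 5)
Your Part I is correct, and it takes a genuinely different route from the paper. You bound the \emph{single-site} conditional probability $\mathbb{P}(\sigma_{XOR}(v)=+\mid\cdot)$ by $u=e^{2(J_h+J_v)}/(e^{2(J_h+J_v)}+e^{-2(J_h+J_v)})$ (the corner computation $u^2+(1-u)^2\le u$ for $u\in[\tfrac12,1]$ is right, and the tower property lets you pass from conditioning on $(\sigma_1,\sigma_2)|_{\neq v}$ to conditioning on $\sigma_{XOR}|_{\neq v}$), and then dominate by product Bernoulli$(u)$ with $u<p_c$. The paper instead conditions on $\sigma_1$, gauge-transforms the couplings of the second Ising model, and dominates the resulting measure by a zero-coupling Ising model with external field $h$ via the FKG lattice condition, obtaining the quantitative bound $\mu(E_v)\le C_1e^{-C_2|v|}$ and hence $\chi<\infty$. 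That quantitative output is not merely stylistic: the paper reuses $\chi<\infty$ (via the argument of Proposition 1 of Russo) to \emph{produce} the infinite contour, which is exactly the step your proposal does not close.

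The genuine gap is in the existence half of Part II and in the ``moreover'' clause. Both rest on your asserted dichotomy ``exactly one of $\{$infinite $+$-cluster, infinite $-$-cluster, infinite contour$\}$ occurs for ergodic finite-energy colourings,'' and that dichotomy is false: the critical XOR Ising model of \Cref{cxor} in this very paper is such a colouring with none of the three. Consequently you cannot deduce, from \Cref{nicl} alone, that the low-temperature dual model has any infinite cluster; the paper obtains this by transporting the infinite contour of the high-temperature model across the planar duality into a cluster of the dual model. For the high-temperature existence statement you explicitly leave the key step (``two unbounded $*$-connected regions of opposite sign must share an unbounded interface'') unproved, and the machinery you propose for it (Burton--Keane plus a Gandolfi--Keane--Russo no-coexistence argument) is both inapplicable here, as you yourself note the measure is non-monotone, and unnecessary. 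Your $*$-percolation idea can in fact be completed directly and more cheaply: an infinite $+$-$*$-cluster containing no infinite $+$-cluster is a chain of finite nearest-neighbour clusters joined at $2\times2$ blocks whose diagonals alternate in sign; at the centre of such a block all four dual edges are present, so the outer boundary contours of the two finite clusters lie in one contour, and iterating along the chain yields a single infinite contour. But that argument does not appear in your proposal, so as written the existence of the infinite contour, and hence the entire ``moreover'' statement, is not established.
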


The paper is organized as follows. In \Cref{cocl}, we prove combinatorial results regarding configurations of contours and clusters. In \Cref{pf1}, we prove \Cref{m1}.  In \Cref{pm4}, we prove \Cref{m4}.   In \Cref{pf23}, we prove \Cref{m2}. In \Cref{pf1w}, we prove a few combinatorial and probabilistic results in preparation to prove the remaining main theorems of the paper.  In \Cref{ucpc}, we discuss similar combinatorial results in  unconstrained percolation. In \Cref{s8},  we prove \Cref{m3}. In \Cref{pf4}, we prove \Cref{m5}. In \Cref{pap}, we prove \Cref{dc,m17,cxor,uc,nicl,ehel}. In \Cref{p51}, we prove a combinatorial lemma required for the proof of \Cref{m4}.

\section{Contours and Clusters}\label{cocl}

In this section, we prove some combinatorial and
probabilistic results regarding infinite contours and
infinite clusters in constrained percolation processes, in
preparation for the proofs of  \Cref{m1,m2,m3,m4}. It will
frequently be convenient to consider graphs embedded into
the plane in the usual way; we identify and edge with
closed line segment joining its the endpoints.

 We begin with the following elementary lemma.

\begin{lemma}\label{ss}Consider the four vertices of a white face of $G$.   If in a constrained configuration in $\Omega$, all the four vertices have state 0, then flipping the states of all vertices to 1, and preserving the states of all the other vertices of $G$, we obtain another constrained configuration in $\Omega$. Similarly, we may change the states of all the four vertices of one white face of $G$ from 1 to 0, and obtain another configuration in $\Omega$.
\end{lemma}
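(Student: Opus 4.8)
The plan is to exploit the fact that the hard constraint is imposed only on black faces, so that altering the four corners of a white face $f$ can affect the validity of the configuration only through the black faces incident to those corners. First I would identify precisely which black faces are affected. Since $f$ is white, its lower-left corner $(m_f,n_f)$ satisfies $m_f+n_f$ odd, and a direct check of the coloring rule shows that the four faces sharing an edge with $f$ (to its left, right, top, and bottom) are all black, while the four faces meeting $f$ only at a corner are white. Each of the four corners of $f$ lies on exactly two black faces, both among these four edge-neighbours; hence the only black faces containing any corner of $f$ are these four, and every black face disjoint from the corners of $f$ keeps its constraint trivially.

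Next I would record how each of these four black faces meets $f$. A black face $B$ edge-adjacent to $f$ shares a full edge with $f$, so exactly two vertices of $B$ --- an adjacent pair forming one side of $B$ --- are corners of $f$, while the opposite side of $B$ consists of two vertices not belonging to $f$. Thus flipping all four corners of $f$ flips precisely the states on one side of each such $B$ and leaves the opposite side of $B$ untouched.

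The crux is then a short inspection of the six allowed local configurations. Writing the four vertices of $B$ in cyclic order, suppose the two corners of $f$ lying on $B$ (an adjacent pair) both have state $0$, as they do when all four corners of $f$ are $0$. Among the allowed configurations $(0000)$, $(1111)$, $(0011)$, $(1100)$, $(0110)$, $(1001)$, the only ones in which a given adjacent pair is $00$ are $(0000)$ and the one in which exactly the opposite side carries $1$; in either case the two vertices of the opposite side are equal. Flipping the pair on $f$ from $00$ to $11$ therefore produces either the configuration with exactly that side equal to $1$, or $(1111)$ --- both allowed. Hence every affected black face $B$ remains valid, and the flipped configuration lies in $\Omega$.

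The reverse operation, changing the four corners of a white face from $1$ to $0$, follows from the identical argument: the set of six allowed configurations is closed under exchanging $0$ and $1$ (indeed $\theta$ pairs them as $0000\leftrightarrow 1111$, $0011\leftrightarrow 1100$, $0110\leftrightarrow 1001$), so the same reasoning applies with the roles of the two states swapped. I expect the only real care to be the geometric bookkeeping --- verifying the colours of the neighbouring faces and that each affected black face meets $f$ in exactly one of its sides --- rather than any genuine difficulty; the combinatorial enumeration is immediate once the incidences are set up.
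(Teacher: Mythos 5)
Your proof is correct and takes essentially the same approach as the paper, which simply asserts that one verifies the constraint on each of the four adjacent black faces; you have spelled out that verification (the geometric bookkeeping of which faces are black and the case check that an allowed configuration with an adjacent $00$ pair must be $(0000)$ or have the opposite side equal to $11$) in full detail.
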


\begin{proof}It is easy to verify that each of the four adjacent black faces has configurations satisfying the constraint.
\end{proof}

We introduce an \df{augmented square grid} $A\ZZ^2$, whose vertices are either vertices of $G$, centers of faces of $G$, or midpoints of edges of $G$. Two vertices $u,w$ of $A\ZZ^2$ are joined by an edge of $A\ZZ^2$ if and only if $\|u-v\|_1=\frac{1}{2}$.

Note that  $A\ZZ^2$ is a square grid with edge length $\frac{1}{2}$. Let $[A\ZZ^2]^*$ be the planar dual lattice of $A\ZZ^2$, which is also a square grid with edge length $\frac{1}{2}$.

For any edge $e\in E(\LL_1)\cup E(\LL_2)$, consider the rectangle $R(e)$ in $\RR^2$ consisting of all the points within $\ell^{\infty}$ distance at most $\frac{1}{4}$ of the line segment joining its endpoints. For a contour $C$, let $\widetilde{C}=\{R(e):e\in C\}$. The topological boundary of $\widetilde{C}$ in $\RR^2$ is precisely a union of line segments corresponding to a set $S$ of edges of $[A\ZZ^2]^*$. The \df{interface} of $C$ is this set of edges $S$. See Figure \ref{pdi}.

In particular, each component of the interface of the contour $C$ is either a self-avoiding cycle or a doubly-infinite self-avoiding path, consisting of edges of $[A\ZZ^2]^*$, and each vertex of $[A\ZZ^2]^*$ is incident to 0 or 2 edges in the interface. Here by self-avoiding cycle we mean a finite connected component of edges of $[A\ZZ^2]^*$ in which each vertex of $[A\ZZ^2]^*$ has two incident edges.

The \df{interface of a contour configuration} $\phi\in\Phi$ is the union of interfaces of all contours in $\phi$. Each component of the interface of $\phi$ is a self-avoiding cycle or doubly-infinite self-avoiding path in $[A\ZZ^2]^*$. See Figure \ref{pdi} for an example of the interface. See also \cite{GH}.

\begin{lemma}\label{cin}For any contour configuration $\phi\in \Phi$, contours can never intersect interfaces, when interpreted as subsets of $\RR^2$.
\end{lemma}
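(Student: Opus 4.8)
The plan is to exploit the fact that, once everything is drawn in the plane, contours and interfaces live on two mutually dual lattices. First I would record the elementary observation that each edge of $\LL_1$ or $\LL_2$, viewed as a segment in $\RR^2$, is exactly a union of four edges of $A\ZZ^2$; consequently a contour $C$ is (the realization of) a subcomplex of $A\ZZ^2$, whereas by construction its interface is a subcomplex of the dual lattice $[A\ZZ^2]^*$. The whole problem therefore reduces to a combinatorial question about incidences between $A\ZZ^2$-vertices and contours, the key object being the set of $A\ZZ^2$-vertices lying on a given contour.

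Next I would identify the thickened contour cell by cell. For an edge $e\in E(\LL_1)\cup E(\LL_2)$, a direct check shows that the rectangle $R(e)$ is precisely the union of the five faces of $[A\ZZ^2]^*$ centered at the five $A\ZZ^2$-vertices lying on $e$ (each such face being the $\ell^{\infty}$-ball of radius $\frac14$ about its center). Summing over the edges of a contour $C'$ gives $\widetilde{C'}=\bigcup_{v}\mathrm{cell}(v)$, the union taken over all $A\ZZ^2$-vertices $v$ lying on $C'$, where $\mathrm{cell}(v)$ is the dual face centered at $v$. Since distinct dual faces have disjoint interiors, the interface of $C'$, i.e.\ the topological boundary $\partial\widetilde{C'}$, consists of exactly those dual edges $\langle v,w\rangle^{*}$ (dual to the $A\ZZ^2$-edge $\langle v,w\rangle$) for which exactly one of the endpoints $v,w$ lies on $C'$. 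Finally, a point of $C$ can meet such a dual edge only at its midpoint, and this occurs if and only if the corresponding $A\ZZ^2$-edge $\langle v,w\rangle$ is itself one of the sub-edges making up $C$. Thus $C$ meets the interface of $C'$ if and only if there is an $A\ZZ^2$-edge $\langle v,w\rangle\subseteq C$ exactly one of whose endpoints lies on $C'$.

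The heart of the argument is then the purely combinatorial claim that every $A\ZZ^2$-vertex lies on at most one contour, so that distinct contours are vertex-disjoint. I would prove this by classifying $A\ZZ^2$-vertices into the three types in the definition of $A\ZZ^2$. A vertex of $G$ lies on no contour edge at all. A center of a white face is an endpoint of edges of only one of $\LL_1,\LL_2$ (never both), and all present edges meeting it lie in a single connected component, hence in one contour. A center of a black face is the midpoint of exactly one edge of $\LL_1$ and one edge of $\LL_2$; if both were present they would cross, which is forbidden by the non-crossing requirement (condition \ref{c2}), so at most one contour passes through it. A midpoint of an edge of $G$ is an interior quarter-point of a unique candidate contour edge, belonging to a unique one of the two auxiliary lattices. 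In every case at most one contour is incident; equivalently, distinct contours share no $A\ZZ^2$-vertex.

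To conclude, if $\langle v,w\rangle\subseteq C$ then both endpoints $v,w$ lie on $C$; by vertex-disjointness neither can also lie on a different contour $C'$, and if $C'=C$ then both lie on $C'$. In no case does exactly one of $v,w$ lie on a contour $C'$, so by the reduction above $C$ meets the interface of no contour, and hence meets the interface of $\phi$ nowhere. I expect the main obstacle to be the case analysis of the third paragraph: the classification itself is routine, but one must be careful to invoke constraint \ref{c2} at black-face centers (this is the only place where the contour-configuration axioms enter) and to verify the parity statement that each edge-midpoint of $G$ is a quarter-point of edges of only one of $\LL_1$ and $\LL_2$. The cell-by-cell identification of $\widetilde{C'}$ in the second step is the other delicate point, since it is what converts the topological statement about boundaries into the clean combinatorial incidence condition.
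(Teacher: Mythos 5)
Your argument is correct and is essentially a fully detailed version of the paper's own (very terse) proof: the paper localizes any putative intersection to the interior of a black or white square and ``checks the two cases,'' invoking the non-crossing of primal and dual contours at black squares, and your classification of $A\ZZ^2$-vertices reproduces exactly these two nontrivial cases (black-face centers, where constraint \ref{c2} is used, and white-face centers), with the vertices of $G$ and edge-midpoints of $G$ as the easy degenerate cases. The extra scaffolding you add --- identifying $\widetilde{C'}$ cell by cell with dual faces, characterizing the interface as the dual edges with exactly one endpoint on $C'$, and reducing the lemma to vertex-disjointness of distinct contours in $A\ZZ^2$ --- is sound and makes the ``straightforward check'' rigorous.
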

\begin{proof}Any intersection must lie in the interior of either a black square or a white square. It is straightforward to check the two cases separately. In the case of a black square, we use the fact that primal contours and dual contours cannot cross each other.
\end{proof}

Throughout this section, we let $\omega\in \Omega$ be a constrained percolation configuration, and let $\phi$ be the corresponding contour configuration in $\LL_1\cup\LL_2$.

\begin{figure}
 \centering
  \includegraphics[width=0.6\textwidth]{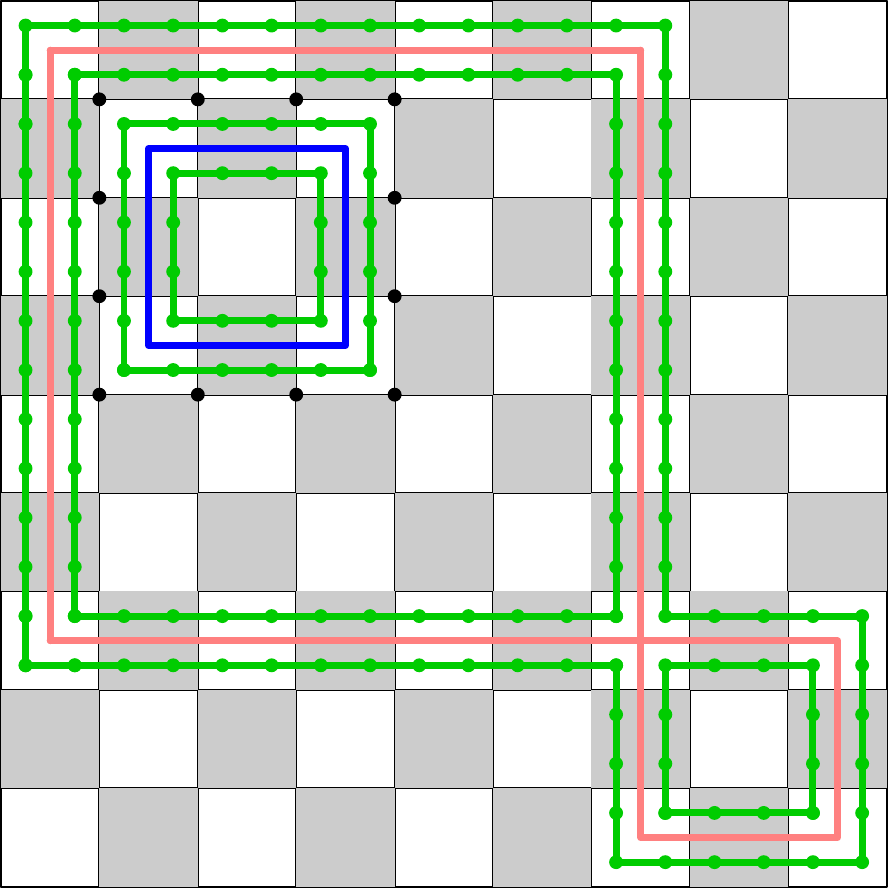}
 \caption{Primal contour, dual contour and interface: red lines represent primal contours; blue lines represent dual contours, green lines represent interfaces; green vertices represent vertices of $[A\ZZ^2]^*$ along the interface; black vertices represent part of a cluster.}\label{pdi}
 \end{figure}

 \begin{lemma}\label{rl}For any component $I$ of the interface of $\phi$, Let $F_{I}$ be the set consisting of all the vertices of $G$ whose $\ell^{\infty}$ distance to $I$ is $\frac{1}{4}$. Then all the vertices in $F_I$ lie in a single cluster, and $F_I$ is the vertex set of a doubly-infinite self-avoiding path (resp.\ self-avoiding cycle) if $I$ is a doubly-infinite self-avoiding path (resp.\ self-avoiding cycle).
 \end{lemma}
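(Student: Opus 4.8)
The plan is to establish a \emph{parallel-offset} correspondence between the interface component $I$ and the vertex set $F_I$, and to run a finite local case analysis along $I$. First I would record the relevant geometry. Each present edge $e$ runs along the center line of its tube $R(e)$, which has $\ell^{\infty}$-half-width $\frac14$; since $e$ is centered on a (half-integer) black-face center, the tube contains no vertex of $G$, so every vertex of $G$ lies strictly in the complement of $T:=\bigcup_{e}R(e)$, the union over all present edges, whose topological boundary is the interface of $\phi$. Consequently every vertex of $G$ at $\ell^{\infty}$-distance $\frac14$ from $I$ lies on the complement side and simply hugs $I$ there, the relevant vertices sitting on the side of $I$ toward the nearer integer grid line. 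I would invoke \Cref{cin} to guarantee that the contour edges lie strictly inside $T$ and never touch $I$, so that the separation between $F_I$ and the contour is clean.

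For part (2) I would traverse $I$. Since $I$ is a simple cycle or a doubly-infinite simple path in $[A\ZZ^2]^*$ with every dual vertex of degree $0$ or $2$, it suffices to examine the local picture at each dual vertex: a straight passage, a convex turn, or a concave turn, together with the slightly more involved picture near a degree-$4$ contour vertex (a white-face center at which four present edges meet), where the tubes meet and $I$ splits locally into two arcs. In each case one reads off the vertex or vertices of $G$ contributed to $F_I$ and checks that, moving along $I$, these are visited in order with consecutive ones adjacent in $G$. Because the offset distance $\frac14$ is smaller than the spacing of $G$, distinct passages of $I$ contribute distinct vertices, so the offset is injective and $F_I$ inherits the topology of $I$: a cycle maps to a $G$-cycle and a doubly-infinite path to a doubly-infinite $G$-path, each self-avoiding.

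For part (1) the key is that consecutive vertices $u,u'$ of $F_I$ have the same state. I would show that the unit segment $\langle u,u'\rangle$ does not meet $T$: both endpoints hug the same side of $I$ and the edge runs parallel to $I$ through the complement, a fact verified in the same finite list of local pictures. If some present edge $f$ crossed $\langle u,u'\rangle$, the crossing point would be the midpoint of a side of the black face crossed by $f$, hence a point of the center line of $R(f)\subseteq T$, contradicting $\langle u,u'\rangle\cap T=\emptyset$. Thus no contour edge separates $u$ from $u'$, so $\omega(u)=\omega(u')$. Since $F_I$ is connected in $G$ by part (2), all of its vertices share one state and therefore lie in a single cluster.

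I expect the main obstacle to be the bookkeeping in the local case analysis, particularly at degree-$4$ contour vertices, where one must check that the two arcs of $I$ separate correctly and that the induced vertices of $F_I$ still thread together into a single parallel path without spurious branching or self-contact. It is precisely \Cref{cin} that rules out a contour doubling back across $I$ and producing such pathologies, so I would lean on it to keep each local picture on the short finite list.
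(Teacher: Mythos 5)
Your proposal is correct and follows essentially the same route as the paper's proof: a local case analysis along $I$ to show that the offset set $F_I$ forms a degree-$2$ (hence path or cycle) subgraph of $G$, combined with Lemma~\ref{cin} (contours never meet interfaces) to conclude that no edge between consecutive vertices of $F_I$ can be crossed by a contour, so all of $F_I$ has a single state. Your tube-based phrasing of the second step (a crossing point would lie on the center line of $R(f)\subseteq T$, while the edge $\langle u,u'\rangle$ avoids $T$) is just a slightly more explicit packaging of the paper's one-line observation that such a contour would have to cross $I$.
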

 \begin{proof}First of all, note that $F_I$ is a connected set of vertices in $G$. Now, if not all the vertices in $F_I$ are in the same infinite cluster, then there exist a pair of adjacent vertices $x,y\in F_I$, such that the edge $(x,y)$ of $G$ crosses a contour. Then the contour crossing $(x,y)$ must cross the interface $I$ as well, but this is a contradiction to \Cref{cin}.

Next we show that $F_I$ is the vertex set of a doubly infinite self-avoiding path or a self-avoiding cycle. Let $E_I$ be the set of edges of $G$ whose distance to $I$ is $\frac{1}{4}$. It is straightforward to check that $F_I=V(E_I)$. The fact that $E_I$ has degree 2 follows by local case analysis.

 See Figure \ref{pdi} for an example of such a part of a cluster, represented by black points.
 \end{proof}

 \begin{lemma}\label{l24}Let $\mathcal{C}$ be a nonempty collection of contours. Any two vertices $x,y\in \ZZ^2$ in a connected component of $\RR^2\setminus \mathcal{C}$ are connected by a path in $G$ that lies in the same component of $\RR^2\setminus \mathcal{C}$.
 \end{lemma}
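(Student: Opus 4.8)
The plan is to show that the continuum connectivity of $x$ and $y$ in $\RR^2 \setminus \mathcal{C}$ can be realized by a lattice path in $G$ that stays within the same continuum component. The key obstacle is that a generic continuous arc from $x$ to $y$ avoiding $\mathcal{C}$ need not run along edges of $G$; I must convert it into a $G$-path without ever crossing a contour. The main tool should be Lemma~\ref{cin}, which says contours never intersect interfaces, together with Lemma~\ref{rl}, which identifies each interface component $I$ with the vertex set $F_I$ of a cluster lying at $\ell^\infty$-distance $\frac14$ from $I$.

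I'd like the reader to understand why I think this is straightforward given those lemmas.

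Let me sketch the construction. First I would fix a continuous path $\gamma:[0,1]\to\RR^2\setminus\mathcal{C}$ with $\gamma(0)=x$, $\gamma(1)=y$, which exists because $x,y$ lie in the same component of the open set $\RR^2\setminus\mathcal{C}$ (here $\mathcal{C}$, as a finite-or-infinite union of closed segments along $\LL_1\cup\LL_2$ edges, is closed, so its complement is open and locally path-connected). The plan is to perturb $\gamma$ so that it only crosses edges of $G$ transversally at their midpoints and passes through no vertex of $A\ZZ^2$; a small generic perturbation achieves this while staying in the same component, since $\mathcal{C}$ has empty interior. Then $\gamma$ visits a finite sequence of faces of $A\ZZ^2$, equivalently it determines a walk in $G$: record the vertices of $G$ that $\gamma$ passes near, in order. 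Whenever $\gamma$ crosses an edge of $G$, that edge is \emph{not} crossed by any contour — otherwise $\gamma$ would meet $\mathcal{C}$, contradicting $\gamma\subset\RR^2\setminus\mathcal{C}$. Hence consecutive vertices in the recorded walk are joined by an edge of $G$ not crossed by any contour, so they lie in the same cluster and, more importantly, the straight segment realizing that edge lies in $\RR^2\setminus\mathcal{C}$.

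The remaining point is that the resulting $G$-path stays in a single component of $\RR^2\setminus\mathcal{C}$. Each edge of the path is a segment disjoint from $\mathcal{C}$, and consecutive edges share a vertex of $G$ which itself lies off $\mathcal{C}$ (vertices of $G$ are midpoints of contour edges only through the incidence relation, but a vertex of $G$ is never a point of a contour segment, since contours run along $\LL_1\cup\LL_2$ whose edges avoid the vertices of $G$). Therefore the whole $G$-path is a connected subset of $\RR^2\setminus\mathcal{C}$, so it lies in one component; since it contains $x$ and $y$, that component is the one containing $x$ and $y$.

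I expect the main obstacle to be the bookkeeping in the perturbation step — making precise that $\gamma$ can be homotoped, rel endpoints and within $\RR^2\setminus\mathcal{C}$, to a path meeting $G$ only transversally at edge-midpoints. This is a routine transversality/general-position argument: $\mathcal{C}$ is a locally finite union of closed segments, the $1$-skeleton of $G$ is another such union, and the bad set (points where $\gamma$ is tangent to $G$ or passes through a vertex of $A\ZZ^2$) is avoidable by an arbitrarily small piecewise-linear approximation of $\gamma$ that keeps its image in the open set $\RR^2\setminus\mathcal{C}$. Once this is set up, the combinatorial conclusion follows immediately, and Lemmas~\ref{cin} and \ref{rl} guarantee the consistency between ``segment avoids $\mathcal{C}$'' and ``edge of $G$ not crossed by a contour.''
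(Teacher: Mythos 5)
Your argument is correct and is essentially the paper's own proof: both discretize a continuous path in $\RR^2\setminus \mathcal{C}$ via the unit squares $S_x=x+[-\tfrac12,\tfrac12]^2$ centered at vertices of $G$, and observe that each transition between adjacent squares certifies that the corresponding edge of $G$ is not crossed by any contour, yielding a $G$-walk from $x$ to $y$ inside the same component. (The only cosmetic difference is that the paper treats the case of the path passing through a corner of the squares directly instead of perturbing it away, and neither \Cref{cin} nor \Cref{rl} is actually needed for this step.)
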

 \begin{proof}For each $x\in \ZZ^2$, let $S_x=x+[-\frac{1}{2},\frac{1}{2}]^2$ be the unit square centered at $x$. Consider a path $p_{xy}$ in $\RR^2\setminus\mathcal{C}$ from $x$ to $y$. Let
 \begin{eqnarray*}
 A=\{x\in\ZZ^2:p_{xy}\ \text{intersects}\ S_x\}.
 \end{eqnarray*}
 Then we claim that $A$ is a connected set in $G\setminus \mathcal{C}$. Indeed, when the path enters a new square, it must do so either by crossing an edge of $G^*$ (the dual graph of $G$), or by passing through a vertex of $G^*$. In the former case, the edge of $G^*$ does not lie in a contour of $\mathcal{C}$; in the latter case, the vertex of $G^*$ does not lie in a contour of $\mathcal{C}$.
 \end{proof}

 In the following lemma, contours may be primal or dual as usual. We say a cluster is \df{incident} to a contour, if there exists a vertex of $\ZZ^2$ in the cluster and an edge of $\LL_1$ or $\LL_2$ in the contour, such that the Euclidean distance of the vertex and the contour is $\frac{1}{2}$.

\begin{lemma}\label{cc}Consider any configuration $\omega\in \Omega$ and associated contour configuration $\Phi=\Phi(\omega)$.
\begin{enumerate}[label=\Roman*.]
\item If there exist at least two infinite contours, then there exists an infinite 0-cluster or an infinite 1-cluster.
\item If $C_1$ and $C_2$ are two infinite contours, then there exists an infinite cluster incident to $C_1$.
\item If $\xi$ is a cluster incident to two infinite contours, then $\xi$ is an infinite cluster.
\item Suppose that $C_1$ is an infinite contour and $\mathcal{C}$ is a nonempty collection of infinite contours, such that $C_1\notin\mathcal{C}$. Let $R$ be the unbounded component of $\RR^2\setminus \cup_{C\in\mathcal{C}}C$ containing $C_1$. Then there is an infinite cluster in $R$.
\end{enumerate}
\end{lemma}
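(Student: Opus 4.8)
The unifying tool here is the \emph{interface} together with \Cref{rl}, supplemented by planarity (the Jordan curve theorem) and the non-crossing property of \Cref{cin}. The basic observation behind (I) and (II) is this: if $C$ is an infinite contour, its fattened region $\widetilde C=\bigcup_{e\in C}R(e)$ is connected and unbounded, and every point of $\widetilde C$ lies within $\ell^\infty$-distance $\tfrac14$ of its topological boundary; hence that boundary, which is the interface of $C$, is unbounded and so contains at least one doubly-infinite self-avoiding path component $I$. By \Cref{rl}, $F_I$ is a single cluster whose vertex set is a doubly-infinite path, so it is infinite, and a short local check shows that the $G$-vertices of $F_I$ adjacent to any interface edge of $I$ are corners of the black face crossed by the corresponding edge of $C$, hence at Euclidean distance $\tfrac12$ from its center. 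Thus $F_I$ is infinite and incident to $C$, which proves (II) (using only that $C_1$ is infinite), and (I) follows at once since $F_I$ is a $0$- or $1$-cluster.

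For (III) I argue by contradiction: suppose $\xi$ is finite and incident to two infinite contours $C_1,C_2$. Incidence of $\xi$ to $C_i$ produces, as above, an interface edge at $\ell^\infty$-distance $\tfrac14$ from a vertex of $\xi$; the interface component $I_i$ containing it satisfies $F_{I_i}\subseteq\xi$ by \Cref{rl}, so $I_i$ is a finite self-avoiding cycle, i.e.\ a Jordan curve, with $\xi$ on its inner side and the fattened contour $\widetilde{C_i}$ on its outer side. Since the outer side of $I_i$ is occupied by $\widetilde{C_i}$, and distinct contours have disjoint fattened regions, the curve $I_i$ faces only $C_i$, so $C_1\ne C_2$ forces $I_1\ne I_2$; and because every vertex of $[A\ZZ^2]^*$ is incident to $0$ or $2$ interface edges, distinct components are vertex-disjoint, so $I_1$ and $I_2$ are disjoint Jordan curves. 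As $\xi$ is connected and cannot cross an interface, $\xi\subseteq\operatorname{int}(I_1)\cap\operatorname{int}(I_2)$, forcing the two curves to be nested. If $I_1$ is the inner curve, then the edges of $C_1$ adjacent to $\xi$ lie at distance $\tfrac14$ outside $I_1$, hence in the annulus strictly inside $I_2$; since $C_1$ is connected and cannot cross $I_2$ by \Cref{cin}, all of $C_1$ lies in the bounded region $\operatorname{int}(I_2)$, contradicting that $C_1$ is infinite. Hence $\xi$ is infinite.

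For (IV) I apply (II) to obtain an infinite cluster $\xi$ incident to $C_1$. Every edge of $G$ joining two equal-state vertices is crossed by no present edge, so $\xi$ never crosses any contour and therefore lies in a single component of $\RR^2\setminus\bigcup_{C\in\mathcal C}C$. The vertex $v\in\xi$ realizing incidence with $C_1$ is joined to the center of an edge $e\in C_1$ by a segment lying inside a single black face, and that black face carries no present edge other than $e$; since the contours of $\mathcal C$ are disjoint from $C_1\subseteq R$, this segment avoids $\bigcup_{C\in\mathcal C}C$, so $v$ lies in the same component $R$ as $C_1$. Consequently $\xi\subseteq R$, and $\xi$ is the desired infinite cluster.

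The main obstacle is the planar bookkeeping in (III): one must verify rigorously that the two interface cycles are genuinely nested Jordan curves enclosing $\xi$ with each contour on the outside, and that the annulus between them is wide enough (disjoint edges of $[A\ZZ^2]^*$ are at distance at least $\tfrac12$) to place $C_1$ strictly inside $I_2$. A secondary point requiring care is the proof that the boundary of the unbounded thin region $\widetilde C$ must contain a doubly-infinite path rather than only finite cycles; this is precisely where planarity of the interface in $[A\ZZ^2]^*$ and the degree-$0$-or-$2$ structure underlying \Cref{rl} are used.
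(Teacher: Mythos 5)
There is a genuine gap at the very first step, and it propagates through parts I, II and IV. You claim that because the interface of an infinite contour $C$ (the boundary of $\widetilde C$) is unbounded, it ``contains at least one doubly-infinite self-avoiding path component.'' That inference is invalid: an unbounded set of interface edges can decompose into infinitely many \emph{finite} cycles with no infinite component. This actually happens in the model. Take the contour configuration in which every edge of $\LL_1$ is present and every edge of $\LL_2$ is absent (this satisfies both defining conditions of $\Phi$: all degrees are $4$, and no primal/dual crossings). Then there is a single infinite contour, its interface is one small square cycle inside each face of $\LL_1$, and every cluster of the corresponding $\omega$ is a finite $2\times 2$ block. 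So the strengthening you announce --- that (II) holds ``using only that $C_1$ is infinite'' --- is false, and your argument for (II), hence for (I) and for the application of (II) inside (IV), does not go through. The second infinite contour in the hypothesis of (II) is not decoration: the paper's proof uses it essentially. There one takes a path $p_{xy}$ from a point $x\in C_1$ to a point $y\in C_2$ meeting no infinite contour except at its endpoints, and looks at the interface component $\ell_u$ of $C_1$ at the start of the path. If $\ell_u$ were a finite cycle, then the first vertex $v$ of the path would lie in the bounded complementary component of $\ell_u$ while $C_1$ lies in the unbounded one; minimality of $p_{xy}$ then forces $y$, and hence all of $C_2$, into that bounded component, contradicting that $C_2$ is infinite. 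Only this dichotomy argument lets one conclude that $\ell_u$ is a doubly-infinite path and invoke \Cref{rl}. Any repair of your proof must reinstate some version of this step.

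Your part (III), by contrast, is essentially sound and is a legitimately different route from the paper's (the paper derives (III) as a byproduct of the path construction in (II), whereas you argue directly by contradiction: finiteness of $\xi$ forces both incident interface components to be finite cycles containing $\xi$ in their interiors, and the resulting nesting traps one of the infinite contours in a bounded region). The planar bookkeeping you flag there --- disjointness and nesting of the two Jordan curves, and the fact that the half-edge from the incidence vertex of $\xi$ to $C_1$ crosses $I_1$ but not $I_2$ --- can indeed be checked, using that each edge of $G$ is crossed by at most one contour and that contours never meet interfaces (\Cref{cin}). Likewise your reduction of (IV) to (II) via the observation that a cluster cannot cross $\cup_{C\in\mathcal C}C$ is cleaner than the paper's re-run of the path construction inside $R$; but it is only as good as your proof of (II), which at present it is not.
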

\begin{proof}We first prove II., which immediately implies I.. If there exist at least two infinite contours, then we can find two distinct infinite contours $C_1$ and $C_2$, two points $x\in C_1$ and $y\in C_2$ (midpoints of edges of $G$), and a self-avoiding path $p_{xy}$, consisting of edges of $G$ and two half-edges, one starting at $x$ and the other ending at $y$, and connecting $x$ and $y$, such that $p_{xy}$ does not intersect any infinite contours except at $x$ and at $y$. Indeed, we may take any path intersecting two distinct infinite contours, and then take a minimal subpath with this property.

Let $v\in \ZZ^2$ be the first vertex along $p_{xy}$ starting from $x$. Let $u$ be the midpoint of the line segment $[v,x]$. Then $u$ lies on the interface of $C_1$. Let $\ell_u$ be the connected component of the interface of $C_1$ containing $u$. Then $\ell_u$ is either a doubly-infinite self-avoiding path or a self-avoiding cycle  consisting of edges of $[A\ZZ^2]^*$.

We consider these two cases separately. Firstly, if $\ell_u$ is a doubly-infinite self-avoiding path, then we claim that $v$ is in an infinite (0 or 1-)cluster of the constrained site configuration on $\ZZ^2$. Indeed, by \Cref{rl}, all the vertices in $F_{\ell_u}$ are in the same cluster and $F_{\ell_u}$ is a doubly infinite self-avoiding path in $G$.

Secondly, if $\ell_u$ is a self-avoiding cycle, then considering $\ell_u$ as a union of line segments in $\RR^2$, $\RR^2\setminus \ell_u$ has two components, $Q_v$ and $Q_v'$, where $Q_v$ is the component including $v$. Since $\ell_u$ is a cycle, exactly one of $Q_v$ and $Q_v'$ is bounded, and the other is unbounded. Using \Cref{cin}, $x\in Q_v'$ implies $C_1\in Q_v'$. Since $C_1\subseteq Q_v'$, and $C_1$ is an infinite contour, we deduce that $Q_v'$ is unbounded, and $Q_v$ is bounded. Note that $y\notin \ell_u$ by \Cref{cin}, so either $y\in Q_v$, or $y\in Q_v'$. If $y\in Q_v'$, then any path consisting of edges of $G$ and one half-edge incident to $y$ and connecting $v$ and $y$ must intersect $\ell_u$, and therefore must intersect $C_1$ also. In particular, $p_{xy}$ intersects $C_1$ not only at $x$, but also at some point other than $x$. This contradicts the definition of $p_{xy}$. Hence $y\in Q_v$. By \Cref{cin}, this implies $C_2\subseteq Q_v$. But $C_2\subseteq Q_v$ is impossible since $C_2$ is infinite and $Q_v$ is bounded. Hence this second case is impossible.

Therefore we conclude that  there exists an infinite (0 or 1)-cluster incident to $C_1$. This establishes II., and hence I..

We now turn to III.. Assume that $\xi$ is a cluster incident to two distinct infinite contours $C_1$ and $C_2$. We can find a path $p_{xy}$, as above, such that every vertex of $G$ along $p_{xy}$ is in $\xi$. Then $\xi$ is infinite since the interface $\ell_u$ is infinite. This establishes III..

Consider Part IV.\ of the lemma. We say a contour is \df{incident} to $R$, if there exists an edge $e$ of $\LL_1$ or $\LL_2$ in the contour and a vertex $v$ of $G$ in $R$, such that the Euclidean distance of $e$ and $v$ is $\frac{1}{2}$. We claim that there exists at least one infinite contour in $\mathcal{C}$ incident to $R$. Recall that $R$ is a connected component of $\RR^2\setminus \mathcal{C}$. Indeed, if there is no infinite contour in $\mathcal{C}$ incident to $R$, then $R=\RR^2$. But this is impossible since $\mathcal{C}$ is nonempty.

 Let $C_2\in \mathcal{C}$ be an infinite contour in $\mathcal{C}$ incident to $R$. Since $R$ contains at least one infinite contour, we can find an infinite contour $C_3\subset R$ such that there exists a path $\ell_{xy}$ connecting a point $x\in C_2$ and $y\in C_3$, consisting of a half-edge starting from $x$, a half-edge starting from $y$ and edges of $G$, such that $\ell_{xy}$ crosses no infinite contours except at $x$ and at $y$, and all the vertices of $G$ along $\ell_{xy}$ are in $R$ by \Cref{l24}. Following the same procedure as above we can find an infinite cluster in $R$ adjacent to $C_2$ and $C_3$.
\end{proof}

\Cref{cc} has the following straightforward corollary.

\begin{corollary}\label{ncl} Let $\mu$ be a probability measure on $\Omega$ satisfying Assumptions $(A1)$--$(A3)$. If $\mu$-a.s.\ there are no infinite clusters, then $\mu$-a.s.\ there are no infinite contours.
\end{corollary}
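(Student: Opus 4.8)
The plan is to deduce the absence of infinite contours from the absence of infinite clusters by a symmetry argument that pairs primal contours with dual contours. Let $N_1$ (resp.\ $N_2$) denote the number of infinite primal (resp.\ dual) contours of $\phi(\omega)$. First I would note that translation by $(2,0)$ or $(0,2)$ preserves both $\LL_1$ and $\LL_2$, so each of $N_1$ and $N_2$ is a $2\ZZ\times 2\ZZ$-translation-invariant random variable taking values in $\{0,1,2,\dots\}\cup\{\infty\}$. Hence by the ergodicity assumption $(A2)$, each event $\{N_1=k\}$ and $\{N_2=k\}$ has probability $0$ or $1$, so $N_1$ and $N_2$ are almost surely equal to (possibly infinite) constants.

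The crucial point is that translation by $(1,1)$ interchanges $\LL_1$ and $\LL_2$. Indeed, an $\LL_1$ vertex $(m-\tfrac12,n+\tfrac12)$ with $m,n$ even is sent by $(1,1)$ to $(m+\tfrac12,n+\tfrac32)=((m+1)-\tfrac12,(n+1)+\tfrac12)$, which is an $\LL_2$ vertex since $m+1,n+1$ are odd; the translation also sends $\LL_1$-edges to $\LL_2$-edges and preserves the black/white coloring of faces. Because $(1,1)\in\sH$, assumption $(A1)$ gives that this translation preserves $\mu$. I would then check that the contour map $\phi$ is equivariant under this shift with the roles of $\LL_1$ and $\LL_2$ exchanged: present primal edges of $\phi(\omega)$ are carried exactly onto present dual edges of the shifted configuration. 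Consequently the shift carries infinite primal contours bijectively onto infinite dual contours, so $N_1$ and $N_2$ have the same law under $\mu$; since both are almost surely constant, their common law forces $N_1=N_2$ almost surely.

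To finish, suppose for contradiction that $\mu$-a.s.\ there is no infinite cluster, but that there is an infinite contour with positive probability, i.e.\ $N_1+N_2\geq 1$ with positive probability. Since $N_1=N_2=c$ almost surely for a common constant $c$, the event $\{N_1+N_2\geq 1\}$ has full probability and $c\geq 1$, so $N_1+N_2=2c\geq 2$: there exist at least two infinite contours (one primal and one dual) almost surely. By part~I of \Cref{cc}, there is then an infinite $0$-cluster or an infinite $1$-cluster almost surely, contradicting the hypothesis. Hence $c=0$, that is, $\mu$-a.s.\ there are no infinite contours.

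I expect the main obstacle to lie in the equivariance verification of the second paragraph: one must confirm that the $(1,1)$-shift genuinely swaps the primal and dual contour configurations, which is slightly delicate because the defining rule for $\phi$ distinguishes horizontal from vertical edges. The verification reduces to the observation that the shift sends the unique $\LL_1$-edge crossing each black face to the unique $\LL_2$-edge crossing the image face (and conversely), so the horizontal/vertical assignment is respected while the primal/dual labels are exchanged. Note that only $(A1)$ and $(A2)$ are used in this argument; the $0/1$-exchange symmetry $(A3)$ plays no role here.
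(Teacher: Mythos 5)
Your proof is correct and follows essentially the same route as the paper: both use the $(1,1)$-translation (which lies in $\sH$ and swaps $\LL_1$ with $\LL_2$) to equate the laws of primal and dual infinite contours, then ergodicity $(A2)$ to make the relevant quantities a.s.\ constant, and finally part~I of \Cref{cc} to derive an infinite cluster from the coexistence of a primal and a dual infinite contour. Your observation that only $(A1)$ and $(A2)$ are actually used is accurate (the paper's citation of $(A3)$ for the equality $\mu(\sA_1)=\mu(\sA_2)$ appears to be a slip, since the bijection is the $\sH$-translation by $(1,1)$), and working with the counting variables $N_1,N_2$ rather than the events ``at least one infinite primal/dual contour'' is an immaterial difference.
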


\begin{proof}Let $\sA_1$ (resp.\ $\sA_2$) be the event that there exists at least one infinite primal (resp.\ dual) contour. There is a bijection between configurations in $\sA_1$ and configurations in $\sA_2$; specifically, we translate each configuration in $\sA_1$ by $(1,1)$, and obtain a configuration in $\sA_2$, and vice versa.  By Assumption (A3), we have $\mu(\sA_1)=\mu(\sA_2)$.

Moreover, since $\sA_1$ and $\sA_2$ are $2\ZZ\times 2\ZZ$ translation invariant events, by Assumption (A2), we have either $\mu(\sA_1)=\mu(\sA_2)=0$, or $\mu(\sA_1)=\mu(\sA_2)=1$.

Suppose that  $\mu(\sA_1)=\mu(\sA_2)=1$.  Since primal contours and dual
contours are distinct, $\mu$-a.s.\ there exist at least two distinct infinite
contours. By \Cref{cc} I., $\mu$-a.s.\ there exists an infinite cluster.
\end{proof}

If $C$ is a contour, we write $G\setminus C$ for the subgraph obtained from $G$ by removing all the edges of $G$ crossed by edges of $C$.

\begin{lemma}\label{icic}Let $C_{\infty}$ be an infinite contour. Then each infinite component of $G\setminus C_{\infty}$ contains an infinite cluster that is incident to $C_{\infty}$.
\end{lemma}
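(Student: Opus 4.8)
The plan is to analyze $G\setminus C_\infty$ through the geometry of the thickened contour $\widetilde{C_\infty}=\bigcup_{e\in C_\infty}R(e)$, whose topological boundary is the interface of $C_\infty$. First I would record two elementary geometric facts, both provable by the local case analysis used repeatedly in this section: no vertex of $G$ lies in $\widetilde{C_\infty}$ (vertices of $G$ are at $\ell^\infty$-distance at least $\frac12$ from every contour edge, while $\widetilde{C_\infty}$ reaches only distance $\frac14$), and an edge of $G$ is removed in $G\setminus C_\infty$ if and only if it meets $\widetilde{C_\infty}$, i.e.\ if and only if it is crossed by an edge of $C_\infty$. Consequently every edge of $G$ not crossed by $C_\infty$ lies in a single connected component of $\RR^2\setminus\widetilde{C_\infty}$. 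Using \Cref{l24} to convert continuous paths into $G$-paths, this yields a correspondence between the components of $G\setminus C_\infty$ and those components $R$ of $\RR^2\setminus\widetilde{C_\infty}$ with $R\cap\ZZ^2\neq\emptyset$, under which a component $K$ equals $R\cap\ZZ^2$. An infinite component $K$ then forces the corresponding region $R$ to be unbounded.

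Fix an infinite component $K=R\cap\ZZ^2$ with $R$ unbounded. Since $\widetilde{C_\infty}\neq\RR^2$, the boundary $\partial R$ is nonempty, and, by the degree-$2$ structure of the interface, it is a union of entire interface components, each a finite self-avoiding cycle or a doubly-infinite self-avoiding path in $[A\ZZ^2]^*$. The key step is to rule out finite cycles here. Suppose some finite interface cycle $J\subseteq\partial R$ occurred. Because $R$ is connected, unbounded, and disjoint from $J$, it must lie in the unbounded (exterior) component of $\RR^2\setminus J$; hence $\widetilde{C_\infty}$ occupies the interior side of $J$ near $J$. But $\widetilde{C_\infty}$ is connected, so if it contained a point outside the bounded region $D_J$ enclosed by $J$, then a path in $\widetilde{C_\infty}$ joining that point to one just inside $J$ would cross $J$ and enter $R=\RR^2\setminus\widetilde{C_\infty}$, a contradiction. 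Thus $\widetilde{C_\infty}\subseteq\overline{D_J}$ would be bounded, contradicting the fact that $C_\infty$, hence $\widetilde{C_\infty}$, is infinite. Therefore every interface component in $\partial R$ is a doubly-infinite self-avoiding path.

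Finally I would pick any such infinite interface component $I\subseteq\partial R$ and apply \Cref{rl}: the associated set $F_I$ is contained in a single cluster and is itself the vertex set of a doubly-infinite self-avoiding path in $G$, hence an infinite cluster. Since the vertices of $F_I$ lie at distance $\frac14$ from $I$ on its non-$\widetilde{C_\infty}$ side, they lie in $R$, so $F_I\subseteq K$; and they lie at distance $\frac12$ from edges of $C_\infty$, so $F_I$ is incident to $C_\infty$, completing the proof. I expect the main obstacle to be the topological bookkeeping of the middle paragraph — establishing rigorously that $\partial R$ decomposes into entire interface components with $\widetilde{C_\infty}$ lying on a single side of each, and then running the Jordan-curve ``trapping'' argument — rather than the appeals to \Cref{rl} and \Cref{l24}, which become routine once the correspondence between components of $G\setminus C_\infty$ and regions of $\RR^2\setminus\widetilde{C_\infty}$ is in place.
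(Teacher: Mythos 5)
Your argument is correct and takes essentially the same route as the paper: both locate an interface component separating the infinite component of $G\setminus C_{\infty}$ from $C_{\infty}$, rule out its being a finite self-avoiding cycle by a Jordan-curve argument (a finite cycle would trap one of the two infinite objects, $C_{\infty}$ or the component, in a bounded region), and conclude with \Cref{rl}. The only difference is organizational: the paper fixes a single adjacent pair $x\in C_{\infty}$, $y\in S$ and examines the one interface component through the midpoint of $[x,y]$, whereas you decompose the entire boundary $\partial R$ of the corresponding planar region; both reduce to the same dichotomy.
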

\begin{proof} Let $S$ be an infinite component of $G\setminus C_{\infty}$. Let $x\in C_{\infty}$ be the midpoint of an edge of $G$, and let $y\in S$ be a vertex of $G$, such that the Euclidean distance of $x$ and $y$ is $\frac{1}{2}$. Let $v$
be the midpoint of the line segment $[x,y]$. Then $v$ lies on the interface of $C_{\infty}$. Let $\ell_v$ be the component of the interface of $C_{\infty}$ containing $v$.

We claim that $\ell_v$ is infinite. Suppose that $\ell_v$ is finite. Then $\ell_v$ is a self-avoiding cycle. Let $Q_x$ (resp.\ $Q_y$) be the component of $\RR^2\setminus \ell_v$ containing $x$ (resp.\ $y$). Then exactly one of $Q_x$ and $Q_y$ is bounded, and the other is unbounded. Note that $C_{\infty}\subset Q_x$ by \Cref{cin}.

We claim that $S\subset Q_y$. To see why that is true, note that since $S$ is connected and $y\in S\cap Q_y$, if $S$ is not a subset of $Q_y$, there exist a pair of adjacent vertices $p,q\in S$, such that $p\in Q_y$ and $q\notin Q_y$. Then the edge $\langle x,y \rangle$ of $G$ crosses the interface $\ell_v$, and therefore crosses the contour $C_{\infty}$ as well. But this is impossible since $S$ is an infinite component of $G\setminus C_{\infty}$.

 Since it is impossible that $C_{\infty}\in Q_x$ and $S\subset Q_y$ both $C_{\infty}$ and $S$ are infinite, we infer that $\ell_v$ is infinite.

According to \Cref{rl}, all the vertices in $F_{\ell_v}$ lie in an infinite cluster incident to $C_{\infty}$.
\end{proof}

\begin{lemma}\label{io} Let $x\in\ZZ^2$ be in an infinite 0-cluster, let $y\in\ZZ^2$ be in an infinite 1-cluster, and  let $\ell_{xy}$ be a path, consisting of edges of $G$ and connecting $x$ and $y$. Then $\ell_{xy}$ has an odd number of crossings with infinite contours in total.

In particular, if there exist both an infinite 0-cluster and an infinite 1-cluster, then there exists an infinite contour.

\end{lemma}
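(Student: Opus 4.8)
The plan is to count, modulo $2$, how often the value of $\omega$ changes along $\ell_{xy}$, and then to split this count according to whether the contour responsible for a given change is finite or infinite.

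First I would record the basic correspondence underlying the construction of $\phi$: for any edge $\langle v,w\rangle$ of $G$ one has $\omega(v)\neq\omega(w)$ if and only if $\langle v,w\rangle$ is crossed by a present contour edge, and in that case by exactly one such edge. This is immediate from inspecting the six admissible local configurations around the black face of which $\langle v,w\rangle$ is a side (the state changes across a side precisely when a present contour edge crosses it). Writing $\ell_{xy}=(x=v_0,v_1,\dots,v_n=y)$, the number of edges $\langle v_{i-1},v_i\rangle$ that are crossed by a contour edge therefore equals the number of indices $i$ with $\omega(v_{i-1})\neq\omega(v_i)$. Since $\omega(x)=0$ and $\omega(y)=1$, this number is odd; and because each such edge is crossed by exactly one contour edge, lying in a unique contour, the total number of crossings of $\ell_{xy}$ with all contours is odd. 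It remains to show that the contribution of the finite contours is even.

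The main step is to prove that $\ell_{xy}$ crosses every finite contour $C$ an even number of times. Here the crucial input is condition (\ref{c1}): every vertex of $\LL_1$ (or $\LL_2$) incident to $C$ has an even number of incident present edges, so $C$ is a finite even subgraph, i.e.\ a $\ZZ/2$-cycle in the plane. For $p\in\RR^2\setminus C$ I would set $w_C(p)\in\{0,1\}$ to be the parity of the number of transversal intersections of $C$ with a generic ray from $p$ to infinity. The even-degree property guarantees that $w_C(p)$ does not depend on the direction of the ray, and $w_C\equiv 0$ on the unbounded region because $C$ is bounded. Since $w_C$ changes value exactly when a point crosses an edge of $C$, the number of crossings of $\ell_{xy}$ with $C$ is congruent to $w_C(x)+w_C(y)\pmod 2$. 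To evaluate these, I would use that $w_C$ is constant on each cluster: two adjacent vertices of a cluster are joined by an edge of $G$ separating equal states, which therefore crosses no contour edge, so the segment between them avoids $C$ and leaves $w_C$ unchanged. As $x$ lies in an infinite, hence unbounded, cluster, that cluster contains vertices arbitrarily far from the bounded set $C$, where $w_C=0$; thus $w_C(x)=0$, and similarly $w_C(y)=0$. Hence $\ell_{xy}$ crosses $C$ an even number of times. Because $\ell_{xy}$ is finite it meets only finitely many contours, so the total number of crossings with finite contours is even; subtracting this from the odd grand total shows that the number of crossings with infinite contours is odd. For the final assertion, if there are both an infinite $0$-cluster and an infinite $1$-cluster, I pick $x,y$ in them and any path $\ell_{xy}$ in $G$: the crossing count with infinite contours is odd, hence positive, so at least one infinite contour must exist.

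The step I expect to be most delicate is the well-definedness of $w_C$, which is exactly where (\ref{c1}) is used: as the ray sweeps past a vertex of $C$ it meets an even number of incident edges, so the crossing parity is preserved; equivalently, the $\ZZ/2$-cycle $C$ has even mod-$2$ intersection with the boundary of the large circular sector bounded by any two rays from $p$, whence the two rays yield the same parity. I would also note that, by construction, every crossing of $\ell_{xy}$ with a contour edge occurs at the midpoint of an edge of $G$ and never at a vertex of $\LL_1\cup\LL_2$, so all the intersections counted above are genuinely transversal and the mod-$2$ bookkeeping is valid.
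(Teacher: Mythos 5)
Your proof is correct, and for the key step it takes a genuinely different route from the paper. The first half (counting state changes along $\ell_{xy}$ to get an odd total number of contour crossings) is identical to the paper's. For the claim that finite contours contribute an even count, the paper argues globally: it shows that $x$ and $y$ both lie in the unique infinite component of $G\setminus\cup_{i}C_i$, produces a second path $\ell'_{xy}$ avoiding all the finite contours, and then deforms $\ell_{xy}$ into $\ell'_{xy}$ by moves across single faces of $G$, checking that each move preserves the parity of crossings because a face contains at most one vertex of $V(\LL_1)\cup V(\LL_2)$ and that vertex has even degree in $\cup_i C_i$. You instead handle each finite contour $C$ separately via its mod-$2$ winding number $w_C$, whose well-definedness rests on exactly the same even-degree condition (\ref{c1}), and evaluate $w_C(x)=w_C(y)=0$ using constancy of $w_C$ on clusters together with unboundedness of the infinite clusters containing $x$ and $y$. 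Both arguments are sound and use the hypotheses in parallel ways (the even-degree condition for a parity invariance, the infinitude of the clusters to reach ``infinity''); yours is arguably more self-contained, since it dispenses with the existence of the auxiliary path $\ell'_{xy}$ and the homotopy bookkeeping, at the cost of setting up the winding-number formalism. Your closing remarks on transversality (crossings occur only at midpoints of edges of $G$, never at vertices of $\LL_1\cup\LL_2$) correctly address the one place where the mod-$2$ bookkeeping could otherwise break down.
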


\begin{proof}Moving along $\ell_{xy}$, two neighboring vertices $u,v\in \ZZ^2$ of $\ell_{xy}$ have different states if and only if the edge $\langle u,v\rangle$ crosses a contour. Since the states of $x$ and $y$ are different, moving along $\ell_{xy}$, the states of vertices must change an odd number of times.  Therefore $\ell_{xy}$ crosses (primal and dual) contours an odd number of times.

It remains to show that the total number of crossings of $\ell_{xy}$ with finite contours is even. Since $\ell_{xy}$ crosses finitely many finite contours in total, let $C_{1},\ldots,C_{m}$ be all the finite contours intersecting $\ell_{xy}$, where $m$ is a nonnegative integer.

Let $G\setminus \cup_{i=1}^{m}C_{i}$ be the subgraph obtained from $G$ by removing all the edges of $G$ crossed by the $C_i$'s.
Since all the $C_i$'s are finite, $G\setminus \cup_{i=1}^{m}C_{i}$ has exactly one infinite component. We claim that both $x$ and $y$ lie in the infinite connected component of $G\setminus \cup_{i=1}^{m}C_{i}$. Indeed, if $x$ is in a finite component of $G\setminus \cup_{i=1}^{m}C_{i}$, then it is a contradiction to the fact that $x$ is in an infinite 0-cluster, because the infinite 0-cluster including $x$ cannot be a subset of a finite component of $G\setminus \cup_{i=1}^{m}C_{i}$. Similarly $y$ is also in an infinite component of $G\setminus \cup_{i=1}^{m}C_{i}$. Since $G\setminus \cup_{i=1}^{m}C_{i}$ has a unique infinite component, we infer that both $x$ and $y$ are in the same infinite component of $G\setminus \cup_{i=1}^{m}C_{i}$.

Since both $x$ and $y$ lie in the infinite connected component of $G\setminus \cup_{i=1}^{m}C_{i}$, we can find a path $\ell'_{xy}$ connecting $x$ and $y$, using edges of $G$, such that the path does not intersect $\cup_{i=1}^{m}C_{i}$ at all. Moreover, each vertex of $\LL_1$ or $\LL_2$ has an even number of incident edges in $\cup_{i=1}^{m}C_{i}$. We can transform $\ell_{xy}$ to $\ell'_{xy}$ using a finite sequence of moves; in each move, the path only changes along the boundary of a single face of $G$. Since the face contains either no vertex of $V(\LL_1)\cup V(\LL_2)$, or a single vertex of even degree in $\cup_{i}C_i$, it is easy to verify that the parity of the total number of crossings is preserved. This implies that $\ell_{xy}$ must cross infinite contours an odd number of times, because $\ell_{xy}$ crosses (infinite and finite) contours an odd number of times in total, and $\ell_{xy}$ crosses finite contours an even number of times.
\end{proof}

\begin{lemma}\label{isc}Assume that $\xi$ is an infinite cluster, and $C$ is an infinite contour. Assume that $x$ is a vertex of $G$ in  $\xi$, and let $y\in C$ be the midpoint of an edge of $G$. Assume that there exists a path $p_{xy}$ connecting $x$ and $y$, consisting of edges of $G$ and a half-edge incident to $y$, such that $p_{xy}$ crosses no infinite contours except at $y$.  Let $z$ be the first vertex of $\ZZ^2$ along $p_{xy}$ starting from $y$. Then $z\in\xi$.
\end{lemma}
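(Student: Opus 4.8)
The plan is to reduce everything to a statement about the component of the relevant vertex in the complement of the contours, and to treat the finite contours and the infinite contour $C$ separately. First I would name the other endpoint $z'$ of the $G$-edge whose midpoint is $y$, and write $q$ for the part of $p_{xy}$ running from $x$ to $z$, which consists only of edges of $G$. Since $p_{xy}$ crosses infinite contours only at $y$, and $y$ lies beyond $z$ on the half-edge $[z,y]$, the subpath $q$ crosses no infinite contour at all; in particular $q$ does not meet $C$. Hence $x$ and $z$ lie in a common component $T$ of $G\setminus C$, and because a cluster can never cross a contour we have $\xi\subseteq T$, so $T$ is infinite.

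Next I would show that $z$ itself lies in an infinite cluster, mimicking the argument of \Cref{icic}. Let $w$ be the midpoint of $[y,z]$; it lies on the interface of $C$, and I let $\ell_w$ be the component of that interface containing $w$. If $\ell_w$ were a finite self-avoiding cycle it would bound a finite region of $\RR^2$; by \Cref{cin} the contour $C$ lies entirely on one side, and since $C$ is infinite that side is unbounded, forcing the side containing $z$ to be bounded. But $T$ is connected, contains $z$, and cannot cross $\ell_w$ (and hence cannot cross $C$), so $T$ would be confined to the bounded side, contradicting that $T$ is infinite. Therefore $\ell_w$ is doubly infinite, and \Cref{rl} gives that all of $F_{\ell_w}$, in particular $z$, lies in a single infinite cluster $\xi_z$.

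It then remains to identify $\xi_z$ with $\xi$, and this is where the finite contours must be handled. Since $\xi$ and $\xi_z$ are both infinite and no cluster crosses a contour, neither can be contained in the finite region bounded by any finite contour; consequently $x$ and $z$ each lie in the unbounded complementary component of every finite contour. The path $q$ meets only finitely many finite contours, each a disjoint closed curve whose bounded interior contains neither $x$ nor $z$. I would then push each excursion of $q$ into such an interior outward along a thin collar of the curve; because distinct contours are pairwise disjoint and the lattice is locally finite, a sufficiently thin collar meets no other contour, so after finitely many modifications $q$ becomes a path from $x$ to $z$ meeting no contour whatsoever. Thus $x$ and $z$ lie in one component of $\RR^2$ minus the collection of all contours, and \Cref{l24} produces a path in $G$ from $x$ to $z$ crossing no contour; every vertex along this path shares the state of $x$, so $z\in\xi$.

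The hard part will be the last paragraph: the hypothesis forbids crossing only infinite contours, so $q$ may genuinely cross finite ones, and a priori $z$ could be cut off from $\xi$ by a small finite loop. The crux is the no-enclosure fact, which is exactly where the infiniteness of both $\xi$ and $\xi_z$ is used, followed by the rerouting past the finitely many finite contours without introducing new crossings. I would take care to state the collar/rerouting step cleanly; alternatively one can replace it by a parity-preserving homotopy across single faces of $G$, as in the proof of \Cref{io}, combined with the fact that $G$ minus the finitely many finite contours crossed by $q$ has a unique infinite component containing both $x$ and $z$.
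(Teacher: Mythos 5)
Your argument is correct in substance, but it is organized differently from the paper's proof, and one of the fallback suggestions you offer would not work as stated. The paper never establishes separately that $z$ lies in an infinite cluster: it anchors everything on $y\in C$ instead, noting that neither the infinite cluster $\xi\ni x$ nor the infinite contour $C\ni y$ can sit in a bounded complementary component of the finitely many finite contours $C_1,\dots,C_m$ crossing $p_{xy}$. It then takes the outer interface cycles $D_1,\dots,D_t$ of these finite contours, applies \Cref{rl} to each $F_{D_i}$, and chains $x$ to $z$ through the contour-free segments of $p_{xy}$ lying in the unbounded component of $\RR^2\setminus\cup_i D_i$. Your route instead first shows $z$ is in an infinite cluster by running the interface argument of \Cref{icic} on $C$ (a correct but extra step), and then argues that no single contour separates $x$ from $z$ so that the union of all contours cannot either; that second half is exactly the mechanism the paper uses in \Cref{bf} via \Cref{p34}, and combined with \Cref{l24} it closes the proof. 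Two cautions on your formulation: a finite contour is a connected even subgraph, not a simple closed curve, so the "thin collar" should really be routed along the outer interface cycle, which by \Cref{cin} meets no contour at all -- this makes the rerouting rigorous and is essentially what the paper does; and your final alternative (the parity-preserving homotopy of \Cref{io} plus uniqueness of the infinite component of $G\setminus\cup_{i=1}^m C_i$) is not sufficient by itself, since parity only controls crossing numbers mod $2$ and a rerouted path could cross finite contours other than $C_1,\dots,C_m$, forcing an iteration you have not justified. The main line of your argument, however, is sound and buys a slightly more modular proof (infinite-cluster membership of $z$ first, then a pure separation argument) at the cost of one additional application of the interface machinery.
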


\begin{proof}Since $p_{xy}$ crosses no infinite contours except at $y$, let $C_{1},\ldots,C_{m}$ be all the finite contours crossing $p_{xy}$.  We claim that $\RR^2\setminus \cup_{i=1}^{m}C_{i}$ has a unique unbounded component, which contains both $x$ and $y$. Indeed,  since $x\in \xi$ and $y\in C$; neither the infinite cluster $\xi$ nor the infinite contour $C$ can lie in a bounded component of $\RR^2\setminus \cup_{i=1}^{m}C_{i}$.

Let $I$ be the intersection of the interface of $\cup_{i=1}^{m}C_{i}$ with the unique unbounded component of $\RR^2\setminus \cup_{i=1}^{m}C_{i}$. Since each $C_{i}$, $1\leq i\leq m$, is a finite contour, each component of the interface of $C_{i}$ is finite. In particular, $I$ consists of finitely many disjoint self-avoiding cycles, denoted by $D_1,\ldots,D_t$. For $1\leq i\leq t$, $\RR^2\setminus D_i$ has exactly one unbounded component, and one bounded component.

By \Cref{rl}, each $F_{D_i}$ form a self-avoiding cycle of $G$, and all the vertices in $F_{D_i}$, for each fixed $i$, are in the same cluster. Note that each time $p_{xy}$ crosses $D_i$, it must intersect $F_{D_i}$ at a vertex of $G$. We claim that all the vertices in $\cup_{i=1}^{t}F_{D_i}$, are also in the same cluster. Indeed, $p_{xy}$ is divided by crossings with the interfaces $D_i$ ($1\leq i\leq t$) into nonoverlapping segments; the interior of each segment is either in a bounded component of $\RR^2\setminus \cup_{i=1}^{t}D_i$, or in the unbounded component of $\RR^2\setminus\cup_{i=1}^{t} D_i$. See Figure \ref{cyc}. %Moreover, if the interior of one segment is in a bounded component of $\RR^2\setminus \cup_{i=1}^{\infty}D_i$, then the interiors of the two neighboring segments are in the unbounded component of $\RR^2\setminus \cup_{i=1}^{\infty}D_i$. %
\begin{figure}
\includegraphics{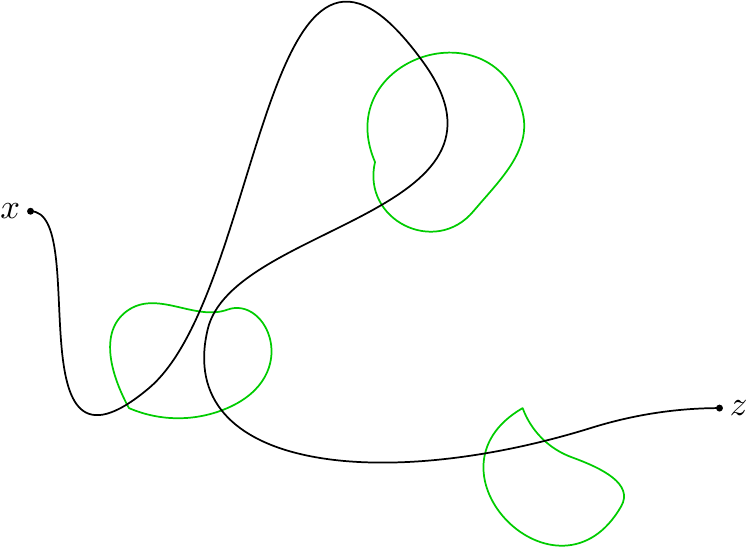}
\caption{$p_{xy}$ and finite interfaces crossing $p_{xy}$: green cycles represent interfaces, the black line represent $p_{xy}$. }\label{cyc}
\end{figure}
For each segment of $p_{xy}$ whose interior lies in an unbounded component of $\RR^2\setminus \cup_{i=1}^{t}D_i$, all the vertices of $G$ on the segment are in the same cluster (since the segment crosses no contours), including two vertices in $F_{D_i}$ and $F_{D_j}$, for some $i\neq j$. Since for any $i,j$, $i\neq j$, $F_{D_i}$ and $F_{D_j}$ can be connected by finitely many such steps as described above, we conclude that all the vertices in $\cup_{i=1}^{t}F_{D_i}$ are in the same infinite cluster. Similarly, $x$ is in the same cluster as $F_{D_i}$, for some $i$, and $z$ is in the same cluster as $F_{D_j}$, for some $j$. Therefore we have $z\in\xi$.
\end{proof}

Using the same arguments as in \Cref{isc}, we can prove the following.

\begin{lemma}\label{cscc}Assume that $C_1$, $C_2$ are two infinite contours. Assume that $x$, $y$ be the midpoints of edges of $G$. Assume that there exists a path $p_{xy}$ connecting $x$ and $y$, consisting of edges of $G$ and two half-edges incident to $x$ and $y$, such that $p_{xy}$ crosses no infinite contours except at $x$ and $y$.  Let $z$ (resp.\ $w$) be the first (resp.\ last) vertex of $\ZZ^2$ along $p_{xy}$ starting from $y$. Then $z, w$ are in the same cluster.
\end{lemma}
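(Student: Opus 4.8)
The plan is to imitate the proof of \Cref{isc} almost verbatim; the only structural difference is that here neither endpoint of the path is pinned to a prescribed infinite cluster, so instead of transporting membership in a fixed cluster $\xi$ along the path, I will build a single finite family of cycles in $G$ that captures both $z$ and $w$ and show that this family lies in one cluster.

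First I would enumerate the finitely many finite contours $\widetilde C_1,\dots,\widetilde C_m$ crossing $p_{xy}$ (finiteness holds because $p_{xy}$ is a finite path and, as each $G$-edge is crossed by at most one present contour edge at its midpoint, each edge of $p_{xy}$ contributes at most one crossing). The infinite contours $C_1,C_2$ enter only through the endpoints: since $x\in C_1$ and $y\in C_2$ lie on infinite contours, and distinct contours are pairwise disjoint in $\RR^2$, both $C_1$ and $C_2$ are disjoint from the bounded set $\bigcup_{i=1}^m\widetilde C_i$ and are unbounded, hence each lies entirely in the unique unbounded component $R$ of $\RR^2\setminus\bigcup_{i=1}^m\widetilde C_i$; in particular $x,y\in R$. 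Moreover the half-edge of $p_{xy}$ from $y$ to its first lattice vertex $z$ crosses only $C_2$ (at $y$) and no $\widetilde C_i$, so $z\in R$, and symmetrically $w\in R$.

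Next, exactly as in \Cref{isc}, I would let $I$ be the intersection of the interface of $\bigcup_{i=1}^m\widetilde C_i$ with $R$. Because each $\widetilde C_i$ is finite, every component of $I$ is a finite self-avoiding cycle of $[A\ZZ^2]^*$; call them $D_1,\dots,D_t$. By \Cref{rl}, each $F_{D_j}$ is a self-avoiding cycle of $G$ whose vertices all lie in a single cluster. Decomposing $p_{xy}$ at its crossings with the $D_j$ into segments, any segment whose interior lies in $R$ crosses no contour, so its $G$-vertices lie in one cluster; since any two of the cycles $F_{D_j}$ are joined by finitely many such segments, the whole union $\bigcup_{j=1}^t F_{D_j}$ lies in one cluster. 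Finally, $z$ lies on the segment from $y$ up to the first crossing with some $D_j$, so $z$ is in the cluster of $F_{D_j}$; symmetrically $w$ is in the cluster of some $F_{D_{j'}}$. As all the $F_{D_j}$ share one cluster, $z$ and $w$ are in the same cluster. (If $t=0$ no finite contour crosses $p_{xy}$, and then the entire lattice portion of $p_{xy}$ from $z$ to $w$ crosses no contour, so $z,w$ trivially share a cluster.)

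The step I expect to require the most care, precisely as in \Cref{isc}, is the segment bookkeeping that fuses all the $F_{D_j}$ into a single cluster; the new feature relative to \Cref{isc} is that both endpoints $z$ and $w$ (rather than one endpoint and a fixed $\xi$) must be attached to this common family, which is exactly why the preliminary check that $z,w\in R$ and that each endpoint segment reaches some $F_{D_j}$ is the load-bearing part. Since that fusion argument is identical to the one already carried out in \Cref{isc}, no genuinely new difficulty arises.
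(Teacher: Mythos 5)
Your proposal is correct and matches the paper, which proves this lemma simply by remarking that the same arguments as in \Cref{isc} apply; you have carried out exactly that adaptation, with the only new ingredient being the (correct) observation that both endpoints $z$ and $w$ lie in the unbounded component $R$ and attach to the common cluster containing all the cycles $F_{D_j}$.
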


\section{Non-existence of finitely many infinite clusters}\label{pf1}

In this section, we prove Theorem \ref{m1}. Throughout this section, let $\omega\in\Omega$ and let $\phi$ be the associated contour configuration.

The number of \df{ends} of a connected graph is the supremum over its finite subgraphs of the number of infinite components that remain after removing the subgraph.

\begin{lemma}\label{att}Let $\mu$ be a probability measure on $\Omega$ satisfying (Ak1). Then $\mu$-a.s.\ no contour has more than two ends.
\end{lemma}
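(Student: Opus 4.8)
The plan is to run a Burton--Keane type counting argument, which uses only the translation invariance (Ak1) together with the amenability of $\ZZ^2$ (no ergodicity, finite energy, or symmetry is needed). Fix a contour configuration $\phi$, and recall that every vertex of $\LL_1\cup\LL_2$ carries an even number of present edges, so each contour is a connected subgraph in which every vertex has degree $2$ or $4$. Call a vertex $v$ a \emph{trifurcation} if it has degree $4$ in its contour $C$, and $C\setminus\{v\}$ (delete $v$ and its four incident present edges) has at least three infinite components. The argument has two deterministic ingredients and one probabilistic conclusion.

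First I would establish the structural fact that a contour with at least three ends must contain a trifurcation. The even-degree constraint is essential: a contour with no degree-$4$ vertex is a single cycle or a single doubly-infinite path, hence has at most two ends, so all branching is concentrated at degree-$4$ vertices. Starting from a finite connected set $W$ with $C\setminus W$ having at least three infinite components (which exists by the definition of ends), I would use the degree-$2$-or-$4$ structure together with the planarity of $\LL_i$ to contract the branching down to a single separating degree-$4$ vertex. The potential obstruction --- three thick ``arms'' meeting along a cycle so that no \emph{single} vertex separates them --- is ruled out here, because attaching an extra infinite direction at a degree-$2$ vertex of a cycle would raise its degree to the forbidden odd value $3$. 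Thus, on the event that some contour has more than two ends, at least one vertex is a trifurcation.

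Second I would prove the Burton--Keane counting bound: for every box $\Lambda_n$ of side $n$, the number of trifurcations inside $\Lambda_n$ is at most the number of present contour edges meeting $\partial\Lambda_n$, which is $O(n)$. This is the usual forest inequality: removing a trifurcation leaves three infinite pieces, each of which must exit $\Lambda_n$ and hence cross $\partial\Lambda_n$; treating the trifurcations as the branch vertices of the forest they induce inside $\Lambda_n$, their number is bounded by the number of boundary exits. Taking expectations gives $\mathbb{E}\bigl[\#\{\text{trifurcations in }\Lambda_n\}\bigr]=O(n)$.

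To finish, suppose for contradiction that $\mu$ assigns positive probability to the existence of a contour with more than two ends. By the first step a trifurcation then exists with positive probability, and since there are only countably many candidate vertices, some vertex $v_0$ is a trifurcation with probability $p>0$. By (Ak1) every $2k\ZZ\times 2k\ZZ$-translate of $v_0$ is a trifurcation with the same probability $p$, so $\mathbb{E}\bigl[\#\{\text{trifurcations in }\Lambda_n\}\bigr]\geq c\,p\,n^2$ for some $c>0$, and for large $n$ this contradicts the $O(n)$ bound above; hence $p=0$ and $\mu(\text{some contour has more than two ends})=0$. The same argument applies verbatim to primal and to dual contours. I expect the main obstacle to be the first, purely combinatorial, step: upgrading ``at least three ends'' to a single-vertex trifurcation while respecting the even-degree and planarity constraints, since for general graphs this reduction can fail and one must genuinely exploit the structure of contours.
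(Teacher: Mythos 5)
Your overall strategy (a Burton--Keane density argument using only translation invariance and the amenability of $\ZZ^2$) is the right one, and it is essentially what the references cited by the paper for this lemma actually carry out (the paper itself gives no argument, deferring to the remark after Corollary 5.5 of \cite{blps} and Exercise 7.24 of \cite{lp}). However, your first, combinatorial step --- that a contour with at least three ends must contain a \emph{single vertex} whose removal leaves at least three infinite components --- is false, and the obstruction is precisely the one you tried to dismiss. The parity argument you give only excludes attaching a single extra edge at a degree-$2$ vertex of a cycle (which would create degree $3$); it does not exclude attaching \emph{two} edges there, which keeps the degree even. Concretely, take the contour consisting of the boundary cycle $Z$ of a large box in $\LL_1$ together with three semi-infinite ``staircases'' of corner-glued square faces emanating from three distinct corners $q_1,q_2,q_3$ of $Z$ (each staircase square sharing only its two diagonally opposite corners with its neighbours). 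Every vertex has degree $2$ or $4$, so this is a legitimate contour (take all $\LL_2$-edges absent), and it has three ends. Yet no vertex is a trifurcation: deleting $q_i$ leaves staircase $i$ as one infinite component and $Z\setminus q_i$ (a path still carrying the other two staircases) as a second, for a total of two; deleting an interior gluing corner of a staircase gives two components; deleting any degree-$2$ vertex disconnects nothing, since it lies on a $4$-cycle. So the event ``some contour has $\geq 3$ ends'' is \emph{not} contained in the event ``some vertex is a trifurcation'', and your counting argument never gets started.

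The repair is standard but must be made explicit: for each $r\in\mathbb{N}$ call $v$ an $r$-trifurcation of its contour $C$ if $C\setminus B(v,r)$ has at least three infinite components. If $C$ has at least three ends then it has an $r$-trifurcation for \emph{some} $r$ (take $W$ as in the definition of ends and a ball containing it), so it suffices to show that for each fixed $r$ the density of $r$-trifurcations vanishes and then take a countable union over $r$. The forest inequality still gives that the number of pairwise $2r$-separated $r$-trifurcations in $\Lambda_n$ is $O(n)$, which, after accounting for the constant factor $C_r$ lost to the separation requirement, still contradicts the $\Omega(p\,n^2)$ lower bound from (Ak1). With that modification your argument is correct and matches the cited proofs; as written, step one is a genuine gap, and it is exactly the step you identified as the main obstacle.
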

\begin{proof}
The lemma follows from remark after Corollary 5.5 of
\cite{blps}; see also Exercise 7.24 of \cite{lp}
or Lemma 4.5 of \cite{HGK14}.
\end{proof}

\begin{lemma}\label{ec2}Let $C$ be an infinite contour with at most 2 ends. Then $\RR^2\setminus C$ has at most 2 unbounded components.
\end{lemma}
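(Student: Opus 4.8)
The plan is to count the unbounded components of $\RR^2\setminus C$ by relating them to the doubly-infinite components of the interface of $C$, and then to bound those by the number of ends. Recall from \Cref{cin} that a contour never meets its interface, so the entire interface of $C$ lies in $\RR^2\setminus C$; its components are disjoint self-avoiding cycles and doubly-infinite self-avoiding paths in $[A\ZZ^2]^*$. Write $d$ for the number of doubly-infinite (bi-infinite) interface components and $e$ for the number of ends of $C$, so $e\le 2$ by hypothesis (this is the situation guaranteed a.s.\ by \Cref{att}). I would establish the chain
\begin{equation*}
\#\{\text{unbounded components of }\RR^2\setminus C\}\ \le\ d\ \le\ e\ \le\ 2 .
\end{equation*}

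For the first inequality, note that each interface component, being connected and disjoint from $C$, lies in a single face of $\RR^2\setminus C$. A bounded face can contain only bounded (cyclic) interface components, so every bi-infinite interface arc lies in an unbounded face. Conversely, every unbounded face $U$ has unbounded boundary $\partial U\subseteq C$ (otherwise $C$, being connected and unbounded, would be forced to enter $U$), and this unbounded boundary is tracked just inside $U$ by an interface component, which is therefore a bi-infinite arc contained in $U$ — this is the same local bookkeeping used for $F_I$ in \Cref{rl}. Assigning to each unbounded face one such arc gives a map into the set of bi-infinite interface arcs; since each arc lies in exactly one face, the map is injective, and $\#\{\text{unbounded faces}\}\le d$ follows.

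The remaining inequality $d\le e$ is the crux. One clean route is to pass to the thickened contour $\widetilde C=\bigcup_{e'\in C}R(e')$, a closed regular neighborhood of $C$ whose boundary is exactly the interface; the inclusion $C\hookrightarrow\widetilde C$ is a proper homotopy equivalence, so $\widetilde C$ has exactly $e$ ends. Its boundary is a $1$-manifold with $d$ non-compact (line) components, hence $2d$ ``line-ends,'' and the inclusion of the boundary distributes these $2d$ line-ends among the $e$ ends of $\widetilde C$. The key point is that at most two line-ends converge to any single end of $\widetilde C$: the requirement that every contour vertex have degree $0$, $2$, or $4$ forbids degree-$1$ and degree-$3$ vertices, which rules out both leaves and the exposed straight boundary of a two-dimensionally spreading region; consequently each one-ended tail of $C$ is thin, and a thin one-ended tail carries at most two bi-infinite interface arcs running alongside it (three or more would force the tail itself to have more than one end). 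This gives $2d\le 2e$, i.e.\ $d\le e$.

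I expect the thin-end analysis to be the main obstacle: making precise that the even-degree condition prevents an end of $C$ from carrying three or more bi-infinite interface arcs. The cleanest implementation probably avoids the manifold language and argues combinatorially on $\LL_1$ (or $\LL_2$): fix a large box $B$, use that $C\setminus B$ has at most $e$ infinite components (the definition of ends, together with the hypothesis), and verify by the degree constraint that each such infinite component emits at most two bi-infinite interface arcs to infinity, whence $d\le e$. The first inequality and all the interface bookkeeping are then routine consequences of \Cref{cin,rl}.
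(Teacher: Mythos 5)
Your overall strategy is genuinely different from the paper's, but it has a real gap at exactly the step you flag as ``the crux.'' The inequality $d\le e$ --- equivalently, the claim that at most two line-ends of the interface converge to any single end of $C$ --- is never proved, and the justification you sketch does not supply a proof. The parenthetical ``three or more would force the tail itself to have more than one end'' is an assertion of essentially the same strength as the lemma itself: since (as your first inequality shows) each unbounded component of $\RR^2\setminus C$ contributes at least one bi-infinite interface arc, proving $d\le e$ is not easier than proving the lemma, so the reduction is circular unless that step is carried out independently. Moreover, the mechanism you propose for it is misdirected. The even-degree condition plays no role here: a ``two-dimensionally spreading'' tail such as a half-plane's worth of grid edges (which does have odd-degree boundary vertices) still carries only one bi-infinite boundary arc, and the paper's proof of \Cref{ec2} uses only that $C$ is a connected closed unbounded set with at most two ends --- no parity of degrees, no interface machinery. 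The notion of a ``thin'' tail is also left undefined, and the ends of $C$ need not be thin in any naive sense (consider a one-ended contour built from the $x$-axis together with all the concentric square cycles $\partial([-n,n]^2)$: its unique end is as fat as the whole plane, yet $d=0$).

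For comparison, the paper's argument is a direct topological separation argument that bypasses interfaces entirely: assuming three unbounded components $\xi_1,\xi_2,\xi_3$ of $\RR^2\setminus C$, one chooses three disjoint rays $\ell_u\subset\xi_1$, $\ell_v\subset\xi_2$, $\ell_w\subset\xi_3$ and a large simply connected $B$ containing their base points, so that $\RR^2\setminus[\ell_u\cup\ell_v\cup\ell_w\cup B]$ has three unbounded sectors; since $C$ is disjoint from the three rays and $C\setminus B$ has at most two infinite components, one sector contains no infinite piece of $C\setminus B$, and a path through that sector joins two of the rays while avoiding $C$, merging two of the $\xi_i$ --- a contradiction. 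If you try to make your ``at most two line-ends per end'' claim rigorous, the natural candidates for the three rays are precisely three interface rays converging to one end, and you are led back to this same sector argument; so the missing step is not a technicality but the entire content of the lemma. Your first inequality (each unbounded component of $\RR^2\setminus C$ sees a bi-infinite interface arc, injectively) is fine and consistent with \Cref{cin,rl}, but it is not where the difficulty lives.
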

\begin{proof}Assume that $\RR^2\setminus C$ has three unbounded components $\xi_1$, $\xi_2$ and $\xi_3$; we will obtain a contradiction.

We can find three points $u\in\xi_1$, $v\in\xi_2$ and $w\in\xi_3$ such that
there exist three semi-infinite paths $\ell_u\subset \xi_1$,
$\ell_v\subset\xi_2$ and $\ell_w\subset \xi_3$ starting from $u$, $v$ and
$w$, respectively. Moreover, there exists a simply connected domain $B\subset
\RR^2$ containing $u,v,w$.  We can choose the domain $B$ such that
$\RR^2\setminus [\ell_u\cup\ell_v\cup\ell_w\cup B]$ has exactly 3 unbounded
components, denoted by $\eta_1$, $\eta_2$ and $\eta_3$, such that $\eta_1$
(resp.\ $\eta_2$, $\eta_3$) is incident to $\ell_v$ and $\ell_w$ (resp.
$\ell_u$ and $\ell_w$, $\ell_u$ and $\ell_v$). Since $C\cap
[\ell_u\cup\ell_v\cup\ell_w]=\emptyset$, and $C\setminus B$ has at most two
infinite components (this follows from the fact that $C$ has at most two
ends), at least one of $\eta_1$, $\eta_2$, $\eta_3$ does not include an
infinite component of $C\setminus B$. Without loss of generality, assume that
$\eta_1$ does not include an infinite component of $C\setminus B$. Then we
can find a path $\ell_{vw}$ connecting $\ell_v$ and $\ell_w$, such that
$\ell_{vw}\cap C=\emptyset$. Hence $\xi_2$ and $\xi_3$ are the same component
of $\RR^2\setminus C$. But this is impossible.

Therefore we conclude that if $C$ is an infinite contour with 2 ends, $\RR^2\setminus C$ has at most 2 unbounded components.
\end{proof}

Let $\mathcal{C}_1$ (resp.\ $\mathcal{C}_2$, $\mathcal{C}_0$) be the set of infinite contours $C$ such that $\RR^2\setminus C$ has exactly 1 (resp.\ 2,0) unbounded components.

\begin{lemma}\label{lc}If $\mathcal{C}_1\cup \mathcal C_2\neq \emptyset$, then $\mathcal{C}_0=\emptyset$.
\end{lemma}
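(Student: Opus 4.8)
The plan is to argue by contradiction, exploiting the fact that an element of $\mathcal{C}_0$ would be forced to enclose every other contour in a bounded region. Suppose $\mathcal{C}_1\cup\mathcal{C}_2\neq\emptyset$, so there is an infinite contour $C\in\mathcal{C}_1\cup\mathcal{C}_2$, and suppose for contradiction that $\mathcal{C}_0\neq\emptyset$, so there is an infinite contour $C_0\in\mathcal{C}_0$. Since $C$ and $C_0$ have different numbers of unbounded complementary components, they are distinct contours.

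First I would record two elementary facts. (i) Any infinite contour, regarded as a union of closed edges of $\LL_1\cup\LL_2$ in $\RR^2$, is an \emph{unbounded} connected subset of the plane: it consists of infinitely many edges of fixed positive length, and only finitely many such edges meet any bounded region, so it cannot be contained in a bounded set. (ii) Distinct contours are \emph{disjoint} as subsets of $\RR^2$. Indeed, two distinct primal (or two distinct dual) contours share no vertex or edge of $\LL_1$ (resp.\ $\LL_2$), since sharing a vertex would place their present edges in a single connected component, contradicting their being distinct components; and a primal contour is disjoint from a dual contour because present edges of $\LL_1$ and $\LL_2$ never cross (the only possible meeting point of a present primal edge and a present dual edge is the center of a black face, which is excluded by the crossing condition in the definition of a contour configuration) while the vertex sets of $\LL_1$ and $\LL_2$ are disjoint.

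With these in hand the contradiction is immediate. By (ii), $C\cap C_0=\emptyset$, so $C\subseteq\RR^2\setminus C_0$; and since $C$ is connected it lies in a single connected component $R$ of the open set $\RR^2\setminus C_0$. By (i), $C$ is unbounded, hence $R$ is unbounded. But $C_0\in\mathcal{C}_0$ means $\RR^2\setminus C_0$ has no unbounded component, a contradiction. Therefore $\mathcal{C}_0=\emptyset$.

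The argument is short, and there is no serious obstacle beyond verifying the two elementary facts; the geometric disjointness in (ii) is the only place requiring genuine care, and it rests entirely on the defining property that present primal and dual edges do not cross. I would also note that the same reasoning proves slightly more: if $\mathcal{C}_0\neq\emptyset$, then its element is the \emph{unique} infinite contour, since any other infinite contour, being unbounded, connected, and disjoint from it, would have to lie in a bounded complementary component. This stronger statement makes transparent why the existence of any ``ordinary'' infinite contour (one in $\mathcal{C}_1\cup\mathcal{C}_2$) rules out the pathological type in $\mathcal{C}_0$.
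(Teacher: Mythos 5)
Your argument is correct and is essentially the paper's own proof: both deduce that a contour in $\mathcal{C}_0$ would force every other (necessarily disjoint, connected, unbounded) infinite contour into a bounded complementary component, which is impossible. The extra details you supply (disjointness of distinct contours and unboundedness of infinite contours) are just explicit verifications of steps the paper leaves implicit, and your closing remark that the element of $\mathcal{C}_0$ would be the unique infinite contour is exactly the observation the paper's proof uses.
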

\begin{proof}If $\mathcal{C}_0\neq \emptyset$, i.e.\ $\mathcal{C}_0$ contains at least one contour $C$, then $C$ is the only infinite contour, since every other contour lies in a component of $\RR^2\setminus C$, and $\RR^2\setminus C$ has only finite components.  But this is impossible since $\mathcal{C}_1\cup\mathcal C_2\neq \emptyset$.
\end{proof}

\begin{lemma}\label{l33}Assume that every infinite contour in $\phi$ has at most 2 ends.  Assume that $\mathcal{C}_1\cup\mathcal{C}_2\neq \emptyset$. Let $m_2$ (resp.\ $m$) be the number of unbounded components in $\RR^2\setminus \cup_{C\in\mathcal{C}_2}C$ (resp.\ $\RR^2\setminus \cup_{C\in \mathcal{C}_1\cup \mathcal{C}_2}C$). Then $m=m_2$ (where possibly both are $\infty$.)
\end{lemma}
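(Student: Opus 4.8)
The plan is to view both quantities as counts of unbounded complementary components and to show that adjoining the contours of $\mathcal{C}_1$ changes nothing. Write $X:=\RR^2\setminus\bigcup_{C\in\mathcal{C}_1\cup\mathcal{C}_2}C$ and $X_2:=\RR^2\setminus\bigcup_{C\in\mathcal{C}_2}C$, so $X\subseteq X_2$ and every unbounded component of $X$ lies in a unique, necessarily unbounded, component of $X_2$. This gives a map $f$ from the unbounded components of $X$ to those of $X_2$; here $m$ and $m_2$ are the cardinalities of its domain and codomain, and the lemma is equivalent to $f$ being a bijection. By \Cref{lc} we have $\mathcal{C}_0=\emptyset$, so each contour present has at least one (and, by \Cref{ec2}, at most two) unbounded complementary component. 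Surjectivity of $f$ (that is, $m\ge m_2$) is the easy direction: a contour of $\mathcal{C}_1$ is connected and disjoint from $\bigcup_{\mathcal{C}_2}C$, hence lies in a single component of $X_2$, and removing from an unbounded planar region a locally finite family of contours, none of which surrounds infinity, always leaves an unbounded component; thus each unbounded component of $X_2$ contains at least one unbounded component of $X$.

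The content of the lemma is injectivity of $f$ (that is, $m\le m_2$), which is the statement that the contours of $\mathcal{C}_1$ never split an unbounded component of $X_2$ into two unbounded pieces. This reduces at once to the following core claim: if $W$ is an unbounded component of $\RR^2\setminus\bigcup_{\mathcal{C}_2}C$, then $W\setminus\bigcup_{\mathcal{C}_1}C$ has exactly one unbounded component. Indeed, were two distinct unbounded components $P,Q$ of $X$ contained in a common component $U$ of $X_2$, applying the claim with $W=U$ would give a contradiction.

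To prove the core claim I would work with large circles rather than the one-point compactification, so as to exploit local finiteness. For each $R$ the circle $S_R=\partial B_R$ is compact and therefore meets only finitely many edges of $\LL_1\cup\LL_2$, hence only finitely many contours; and an unbounded complementary component is detected, for all large $R$, by the collection of arcs of $S_R$ it contains in the exterior of $B_R$. A contour $C\in\mathcal{C}_1$ has at most two ends, so for large $R$ its intersection with the exterior of $B_R$ consists of its two ends, each crossing $S_R$; the defining property $C\in\mathcal{C}_1$ (a unique unbounded complementary component) is exactly the statement that these two crossings do not separate the outer arcs between them, i.e.\ the two sides remain joined in $\RR^2\setminus C$ outside $B_R$. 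Comparing the arc pattern produced by $\bigcup_{\mathcal{C}_2}C$ with that produced by $\bigcup_{\mathcal{C}_1\cup\mathcal{C}_2}C$, the additional crossings contributed by $\mathcal{C}_1$ are non-separating in this sense, so they merge no new unbounded regions and split none; a path-pushing argument in the exterior annuli, of the type used in \Cref{l24}, upgrades this to equality of the unbounded-component counts in the limit $R\to\infty$.

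The step I expect to be the main obstacle is precisely this core claim: translating the global property ``$\RR^2\setminus C$ has a unique unbounded component'' for $C\in\mathcal{C}_1$ into the local, scale-by-scale statement that the two ends of $C$ exit each large circle in a non-separating configuration, and then controlling the infinitely many infinite contours, all of which accumulate at infinity, uniformly enough to pass to the limit. Local finiteness on each fixed $S_R$ (only finitely many contours meet it) is what keeps this manageable, reducing every finite-scale comparison to finitely many contours, while the $\mathcal{C}_1$ non-separation property guarantees the count is unchanged.
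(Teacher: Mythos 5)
Your reduction of the lemma to a bijection between unbounded components is the right framing, and it matches the paper's strategy in outline, but both directions of your bijection rest on claims you do not establish, and you explicitly flag the harder one as an unresolved ``main obstacle.'' That obstacle is exactly the mathematical content of the lemma, so the proposal as written is a plan rather than a proof. Concretely: for injectivity you need that an unbounded component $R$ of $\RR^2\setminus\bigcup_{C\in\mathcal{C}_2}C$ contains at most one unbounded component after the contours of $\mathcal{C}_1$ are also removed. The paper gets this by forming $g(R)=R\cap\bigcap_{C\in\mathcal{C}_1}f(C)$, where $f(C)$ is the unique unbounded complementary component of $C\in\mathcal{C}_1$, and invoking \Cref{p34} (Basye's separation theorem for countable families of closed sets): since no single contour separates two given points of $g(R)$, neither does the union, so $g(R)$ is connected and is therefore a single component; any unbounded component of the finer complement inside $R$ must sit inside $g(R)$ and hence equal it. Your circle-crossing sketch is aimed at the same conclusion, but it never resolves the two issues you yourself identify: an end of a contour can cross a large circle $S_R$ arbitrarily many times, the ``two sides rejoin in $\RR^2\setminus C$'' property may only hold via paths passing through $B_R$, and there is no argument for passing to the limit over the infinitely many contours accumulating at infinity. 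Without some substitute for \Cref{p34}, the core claim is unproven.

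The surjectivity direction also has a gap. You assert that removing a locally finite family of contours, each with an unbounded complementary component, from an unbounded region ``always leaves an unbounded component.'' When $\mathcal{C}_1$ is infinite this is not automatic: one must rule out that $\bigcap_{C\in\mathcal{C}_1}f(C)$ is empty or bounded. The paper does not treat this as free; it uses \Cref{cc} Part IV (which in turn rests on the interface construction of \Cref{rl}) to exhibit an infinite cluster lying in $R\cap\bigcap_{C\in\mathcal{C}_1}f(C)$, which simultaneously certifies nonemptiness and unboundedness, and then again \Cref{p34} for connectedness. So both halves of your bijection ultimately need the two tools you have omitted: the topological non-separation result and the cluster-from-interface construction. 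I would suggest either importing those (at which point your argument essentially becomes the paper's) or genuinely carrying out the $S_R$-arc bookkeeping, including a uniform control as $R\to\infty$, which looks substantially harder than the route the paper takes.
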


To prove \Cref{l33}, we first need a fact about metric spaces.

If $x$ and $y$ are two points of a locally connected metric space $M$, and $H\subseteq M$ is a closed set. We say $H$ \df{separates} $x$ from $y$, if $x$ and $y$ are in two distinct components of $M\setminus H$.

\begin{proposition}\label{p34}Let $x$ and $y$ be two points of a connected and locally arcwise connected metric space $S$, and let $G=\{g_i\}$ be a countable collection of closed sets such that
\begin{enumerate}
\item the common part of every pair of elements of $G$ is the closed set $H$ (which may be empty);
\item if $b_1$ and $b_2$ are two arcs from $x$ to $y$ that lie in $S\setminus H$, then $b_1\cup b_2$ lies in a compact set which is simply connected in the weak sense and whose closure contains no point of $H$;
\item $\cup_{g_i\in G}g_i$ is locally compact.
\end{enumerate}
If no element of $G$ separates $x$ from $y$ in $S$, then $\cup_{g_i\in G}g_i$ does not separate $x$ from $y$ in $S$.
\end{proposition}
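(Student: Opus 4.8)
The plan is to argue by contradiction and reduce the countable statement to a two-set Phragm\'en--Brouwer/Janiszewski-type step. First I would record the topological reduction. Since $S$ is connected and locally arcwise connected, every open subset of $S$ is locally arcwise connected, so its connected components are open and coincide with its path components. Hence for a closed set $F$, the statement ``$F$ separates $x$ from $y$'' is equivalent to ``there is no arc from $x$ to $y$ in $S\setminus F$.'' I would also note that we may assume there are at least two sets $g_i$ (otherwise there is nothing to prove); then hypothesis (1) gives $H=g_i\cap g_j\subseteq g_i$ for every $i$, so any arc avoiding some $g_i$, or avoiding a finite union $G_n:=g_1\cup\dots\cup g_n$ (which satisfies $G_{n-1}\cap g_n=\bigcup_{j<n}(g_j\cap g_n)=H$), automatically lies in $S\setminus H$.

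The heart of the argument is the two-set case: if $A,B$ are closed with $A\cap B=H$ and neither separates $x$ from $y$, then $A\cup B$ does not either. By the reduction there are arcs $a\subseteq S\setminus A$ and $b\subseteq S\setminus B$ from $x$ to $y$, and since $H\subseteq A$ and $H\subseteq B$ both lie in $S\setminus H$. Hypothesis (2) then places $a\cup b$ inside a compact, weakly simply connected set $K$ whose closure misses $H$. The key observation is that inside $K$ the two sets become disjoint: $(A\cap K)\cap(B\cap K)=(A\cap B)\cap K=H\cap K\subseteq H\cap\overline{K}=\emptyset$. Moreover $a\subseteq K\setminus A$ and $b\subseteq K\setminus B$ witness that neither $A\cap K$ nor $B\cap K$ separates $x$ from $y$ within $K$. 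I would then invoke the Phragm\'en--Brouwer property, which is exactly what weak simple connectivity of $K$ supplies: for disjoint closed subsets of $K$ neither of which separates $x$ from $y$, the union does not separate $x$ from $y$ in $K$. This produces a connected subset of $K\setminus(A\cup B)$ joining $x$ and $y$; as $K\subseteq S$, the points lie in a single component of $S\setminus(A\cup B)$, as required.

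With the two-set case in hand, the finite case follows by induction on $n$: taking $A=G_{n-1}$ and $B=g_n$, hypothesis (1) gives $A\cap B=H$, $A$ does not separate $x$ from $y$ by the inductive hypothesis, and $B=g_n$ does not by assumption, so $G_n=A\cup B$ does not separate $x$ from $y$. Local compactness is not needed at this stage, since each $G_n$ is a closed subset of the locally compact set $\bigcup_i g_i$. Equivalently, for every $n$ there is an arc $a_n$ from $x$ to $y$ in $S\setminus G_n$, and all the $a_n$ avoid $H$.

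Finally I would pass from finite to countable, and this is the step I expect to be the main obstacle. Suppose for contradiction that $\bigcup_i g_i$ separates $x$ from $y$, so $S\setminus\bigcup_i g_i=U\sqcup V$ is a clopen partition with $x\in U$, $y\in V$; equivalently the nested connected open sets $Q_n$ (the component of $x$ in $S\setminus G_n$, which also contains $y$) intersect in a set where $x$ and $y$ fall into different components. The difficulty is that a naive limit of the arcs $a_n$ need not avoid $\bigcup_i g_i$: although $a_n$ misses $g_i$ for all $n\ge i$, points of $g_i$ may still arise as limits of points $a_n(t_n)\notin g_i$, so the Hausdorff limit of the $a_n$ can accumulate onto the union even while avoiding each finite piece. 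Hypothesis (3) is what rules this out: local compactness of $\bigcup_i g_i$ lets me work in a compact neighborhood of any putative accumulation point, where only finitely many $g_i$ are relevant at a given scale, and reroute the $a_n$ there, using the weak simple connectivity from (2) to keep each rerouting inside a controlled simply connected region, so as to extract a connected set from $x$ to $y$ disjoint from every $g_i$. This contradicts the assumed separation. I expect the careful bookkeeping in this limiting and rerouting step, ensuring the limit object is connected and disjoint from the whole union rather than merely from each $G_n$, to be the technical crux of the proposition.
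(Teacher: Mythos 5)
The paper does not actually supply a proof of this proposition: it is quoted verbatim from the literature and justified by the single line ``See Theorem 3 of \cite{Bas35}.'' So the relevant comparison is between your sketch and Basye's theorem itself.

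Your reduction to the two-set case and the finite induction are in the classical spirit and look essentially right, with one caveat: the step ``weak simple connectivity of $K$ supplies the Phragm\'en--Brouwer property for disjoint closed subsets of $K$'' is not a standard fact you can simply invoke; it is itself one of the nontrivial theorems in Basye's paper (his notion of ``simply connected in the weak sense'' is defined precisely so that such Janiszewski-type statements can be proved, and the implication requires work). More seriously, the passage from finite unions to the countable union is not a proof but an acknowledged gap. You correctly identify the obstruction --- the arcs $a_n\subseteq S\setminus G_n$ may accumulate onto $\bigcup_i g_i$ even though each avoids every finite subunion --- and you correctly identify hypothesis (3) as the tool meant to defeat it, but ``work in a compact neighborhood \dots and reroute the $a_n$ there \dots so as to extract a connected set'' is a description of what a proof would have to accomplish, not an argument. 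In particular you never produce the connected subset of $S\setminus\bigcup_i g_i$ joining $x$ to $y$, nor specify the limit object (Hausdorff limit? nested intersection of components?) or why it is connected and disjoint from the full union. Since this limiting step is exactly where the content of the proposition lies, the proposal as written does not establish the statement; for the purposes of this paper the honest route is the one the authors take, namely citing Basye's Theorem 3, whose proof carries out the compactness/rerouting bookkeeping you defer.
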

\begin{proof}See Theorem 3 of \cite{Bas35}.
\end{proof}

\medskip

\noindent\textbf{Proof of \Cref{l33}.} Since every infinite
contour in $\phi$ has at most 2 ends, by \Cref{ec2}, the
complement of every infinite contour in $\phi$ has at most
2 unbounded components. In other words
$\mathcal{C}_1\cup\mathcal{C}_2\cup\mathcal{C}_0$ contains
all the infinite contours in $\phi$. Since
$\mathcal{C}_1\cup \mathcal{C}_2\neq \emptyset$, by
\Cref{lc}, $\mathcal{C}_0=\emptyset$. Therefore
$\mathcal{C}_1\cup\mathcal{C}_2$ contains all the infinite
contours in $\phi$.

 Let $\mathcal{Q}_2$ (resp.\ $\mathcal{Q}$) be the set of unbounded components in $\RR^2\setminus \cup_{C\in\mathcal{C}_2}C$ (resp.\ $\RR^2\setminus \cup_{C\in\mathcal{C}_2\cup \mathcal{C}_1}C$). To prove the lemma, it suffices to construct a bijection from $\mathcal{Q}_2$ to $\mathcal{Q}$.

For each $C\in\mathcal{C}_1$, define $f(C)$ to be the unbounded component of $\RR^2\setminus C$. For each $R\in \mathcal{Q}_2$, define
\begin{eqnarray*}
g(R)=R\cap \cap_{C\in \mathcal{C}_1}f(C).
\end{eqnarray*}
We claim that $g$ is a bijection from $Q_2$ to $Q$. To see why that is true, note that for each $C\in\mathcal{C}_1$, $C$ lies in an unbounded component of $\RR^2\setminus\cup_{C\in \mathcal{C}_2} C$. For each $R\in \mathcal{Q}_2$, the following cases might occur:
\begin{enumerate}[label=(\alph*)]
\item $R$ contains no infinite contours in $\mathcal{C}_1$;
\item $R$ contains at least one infinite contour in $\mathcal{C}_1$.
\end{enumerate}

If Case (a) occurs, then for each $C\in\mathcal{C}_1$, $R\subset f(C)$. Hence $g(R)=R\in \mathcal{Q}$.

If Case (b) occurs, by Part IV of \Cref{cc}, we can find an infinite cluster in $R$. Note that this infinite cluster must lie in $R\cap\cap_{C\in \mathcal{C}_1}f(C)$. Therefore $g(R)$ is nonempty and unbounded. According to \Cref{p34}, any two points in $g(R)$ cannot be separated by contours in $\phi$. As a result, $g(R)$ is connected, and there exists a unique component $Q\in\mathcal{Q}$ such that $g(R)\subseteq Q$.

We claim that $g(R)=Q\in \mathcal{Q}$. Indeed, if $g(R)$ is a proper subset of $Q$, then there exist $x\in g(R)$, $y\in[Q\setminus g(R)]$, such that $\cup_{C\in\mathcal{C}_1\cup\mathcal{C}_2}C$ separates $x$ from $y$ in $\RR^2$. But this is impossible since $x,y\in Q$.

Note that $g(R)\subseteq R$ for any $R\in\mathcal{Q}_2$. We claim that for any $R\in\mathcal{Q}_2$, $g(R)$ is the unique element in $\mathcal{Q}$ satisfying $g(R)\subseteq R$. Indeed, if there exists $Q,Q'\in\mathcal{Q}$, such that $Q\subseteq R$ and $Q'\subseteq R$, then using the definition of $g(R)$, we deduce that $Q\subseteq g(R)$ and $Q'\subseteq g(R)$. Since $g(R)$ is a component in $\mathcal{Q}$, and different components are disjoint, we have $g(R)=Q=Q'$.

Moreover, for each $Q\in\mathcal{Q}$, we can find a unique $R\in\mathcal{Q}_2$, such that $Q\subseteq R$. Then $Q=g(R)$. Hence $g$ is a bijection from $\mathcal{Q}_2$ to $\mathcal{Q}$; the proof is complete.
\hfill$\Box$

\begin{lemma}\label{eo}Assume that every infinite contour in $\phi$ has at most 2 ends.
 Let $\sN$ be the total number of infinite contours satisfying the condition that the complement of the infinite contour in $\RR^2$ has two unbounded components; i.e.\ $\sN=|\mathcal{C}_2|$.
 \begin{enumerate}
\item If $1\leq \sN< \infty$, then the total number of unbounded components in $\RR^2\setminus [\cup_{C\in\mathcal{C}_2}C]$ is $\sN+1$. Moreover, there is an infinite component of interface (doubly-infinite self-avoiding path) of some contour in $\mathcal{C}_2$ in each unbounded component of $\RR^2\setminus [\cup_{C\in\mathcal{C}_2}C]$.
\item If $\sN=\infty$, then there are infinitely many infinite clusters.
\end{enumerate}
\end{lemma}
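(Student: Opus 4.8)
The plan is to prove the two parts in sequence, reducing Part (2) to Part (1) applied to arbitrary finite sub-collections of $\mathcal{C}_2$. Two structural facts will be used throughout: distinct contours are pairwise disjoint as subsets of $\RR^2$ (present edges of $\LL_1$ and $\LL_2$ never cross, and edges of distinct contours share no vertex); and each $C\in\mathcal{C}_2$ has exactly two ends, so by the interface construction its interface consists of exactly two doubly-infinite self-avoiding paths, one bounding each of the two unbounded components of $\RR^2\setminus C$ (the ``two ends give four semi-infinite interface strands, pairing into two doubly-infinite paths'' count).

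For Part (1) I would induct on $\sN$. The base case $\sN=1$ is the definition of $\mathcal{C}_2$: $\RR^2\setminus C$ has exactly $2=\sN+1$ unbounded components, and by \Cref{rl} each contains one of the two doubly-infinite interface paths of $C$. For the inductive step, enumerate $\mathcal{C}_2=\{C_1,\dots,C_{\sN}\}$ and assume the statement for $C_1,\dots,C_{\sN-1}$, so $\RR^2\setminus\bigcup_{i<\sN}C_i$ has exactly $\sN$ unbounded components, each carrying a doubly-infinite interface path. Since $C_{\sN}$ is connected, unbounded, and disjoint from the others, it lies in a single unbounded component $R$; removing $C_{\sN}$ alters only $R$, so it suffices to show $R\setminus C_{\sN}$ has exactly two unbounded components, each carrying an interface path of $C_{\sN}$. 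The lower bound comes from the interface: the two doubly-infinite interface paths $I_1,I_2$ of $C_{\sN}$ lie in $R$ (a small neighbourhood of $C_{\sN}$ meets no other contour), and by \Cref{rl} each $I_j$ carries an infinite cluster $F_{I_j}\subset R$; as $F_{I_1},F_{I_2}$ lie in the two distinct unbounded components $A,B$ of $\RR^2\setminus C_{\sN}$, they lie in distinct components of $R\setminus C_{\sN}$. Combining the two new components $R\cap A$, $R\cap B$ with the untouched $\sN-1$ older components yields the count $(\sN-1)+2=\sN+1$ and the ``moreover'' assertion.

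The hard part is the matching upper bound: that $R\setminus C_{\sN}$ has at most two unbounded components, i.e.\ that $R\cap A$ (and symmetrically $R\cap B$) has at most one unbounded component. Equivalently, two points $p,q\in R\cap A$ lying in unbounded components must be joined inside $R\cap A$. The tool is \Cref{p34}, applied in the relevant side of $C_{\sN}$: take the finite family $\mathcal{G}=\{C_i:i<\sN\}$ of pairwise disjoint closed sets, so the common part $H$ is empty and local compactness is automatic for a finite union. Because $p,q$ lie in the single component $R$ of $\RR^2\setminus\bigcup_{i<\sN}C_i$, no individual $C_i$ separates $p$ from $q$ in $\RR^2$; moreover each such $C_i$, being connected and disjoint from $C_{\sN}$, lies entirely in $A$ or entirely in $B$, so those in $B$ cannot separate anything in $A$. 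The delicate and decisive point is to upgrade non-separation in $\RR^2$ to non-separation inside the subspace $A$ for the contours contained in $A$, and to check hypothesis (2) of \Cref{p34} (that two arcs from $p$ to $q$ in $A$ bound a compact simply connected set); once this is in place, \Cref{p34} gives that $\bigcup_{i<\sN}C_i$ does not separate $p$ from $q$ in $A$, contradicting their lying in different components of $R\cap A$. This is exactly the planar separation bookkeeping already carried out in \Cref{l33}, so I expect the same machinery to close it.

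For Part (2), assume $\sN=\infty$. Fix an arbitrary integer $k$ and any $k$ distinct contours of $\mathcal{C}_2$. The inductive argument of Part (1) applies verbatim to this finite sub-collection (it never uses that all of $\mathcal{C}_2$ is present), so $\RR^2$ minus these $k$ contours has exactly $k+1$ unbounded components, each containing a doubly-infinite interface path and hence, by \Cref{rl}, an infinite cluster. Since a cluster never crosses a contour it lies in a single complementary component, so these $k+1$ infinite clusters are pairwise distinct. As $k$ is arbitrary, the number of infinite clusters exceeds every integer, giving infinitely many. No consistency of the clusters across different $k$ is needed: the mere bound ``at least $k+1$ distinct infinite clusters for every $k$'' already forces infinitely many.
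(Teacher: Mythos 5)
Your proposal follows essentially the same route as the paper's proof: induction on $\sN$, adjoining one contour $C_{\sN}$ at a time, with the new contour splitting the unbounded component $R$ containing it into two pieces whose unboundedness is witnessed by the two doubly-infinite interface paths via \Cref{rl}, connectivity of the pieces via \Cref{p34}, and Part (2) obtained by applying Part (1) to arbitrary finite subcollections of $\mathcal{C}_2$. The one step you explicitly leave open --- upgrading non-separation in $\RR^2$ to non-separation inside the subspace $A$ so as to invoke \Cref{p34} within $A$ --- is resolved more simply in the paper, and no subspace version of \Cref{p34} is needed: apply \Cref{p34} in $\RR^2$ to the full family $\{C_1,\dots,C_{\sN}\}$ at once. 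For $x,y\in R\cap A$, no $C_i$ with $i<\sN$ separates $x$ from $y$ in $\RR^2$ (they lie in the common component $R$ of the complement of $\bigcup_{i<\sN}C_i$, hence in the same component of each individual complement), and $C_{\sN}$ does not separate them either since both lie in $A$; hence the union separates neither, so $R\cap A$ lies in a single component of $\RR^2\setminus\bigcup_{i\leq\sN}C_i$, which gives your upper bound. One minor imprecision: the interface of a two-ended contour need not consist of \emph{exactly} two doubly-infinite paths, since it may also contain finite cycles bounding bounded holes of the complement; what is actually needed, and what the paper's base case proves, is only that each of the two unbounded complementary components contains one doubly-infinite interface component. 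With these two repairs your argument closes, and it coincides with the paper's.
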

\begin{proof}
We first prove Part I.
We will prove this statement by induction on $\sN$.

First of all, when $\sN=1$, let $C_1$ be the unique infinite contour in
$\mathcal{C}_2$. By definition of $\mathcal{C}_2$, obviously $\RR^2\setminus
C_1$ has 2 unbounded components, denoted by $\eta_1$ and $\eta_2$. Let
$x\in\eta_1\cap \ZZ^2$ (resp.\ $y\in \eta_2\cap \ZZ^2$) be a vertex of $G$ in
$\eta_1$ (resp.\ $\eta_2$) whose Euclidean distance to $C_1$ is
$\frac{1}{2}$. Let $p,q\in C_1$ be two points along $C_1$ which are midpoints
of edges of $G$ such that the Euclidean distance of $x$ and $p$ (resp.\ $y$
and $q$) is $\frac{1}{2}$. Let $I_1$ (resp.\ $I_2$) be the component of
interface of $C_1$ passing through the midpoint of $x$ and $p$ (resp.\ $y$
and $q$). Then $I_1$ (resp.\ $I_2$) is either a self-avoiding cycle or
doubly-infinite self-avoiding path.

We claim that both $I_1$ and $I_2$ are doubly-infinite self-avoiding paths. To see why that is true, it suffices to show that both components of $\RR^2\setminus I_1$ (resp.\ $\RR^2\setminus I_2$) are unbounded. Indeed, $I_1$ passes through the midpoint of $x$ and $p$, $x\in\eta_1\cap \ZZ^2$ and $p\in C_1$. By \Cref{cin}, the contour $C_1$ and the interface $I_1$ never intersect. Therefore $C_1$ lies in one component of $\RR^2\setminus I_1$, which is unbounded since $C_1$ is infinite. All the vertices in $\eta_1\cap \ZZ^2$ lies in the other component of $\RR^2\setminus I_1$. Otherwise since $\eta_1$ is connected, there exist a pair of adjacent vertices $u,v\in \eta_1\setminus I_1$, such that $u$ and $v$ are in different components of $\RR^2\setminus I_1$. Then the edge $\langle u,v\rangle $ of $G$ must cross $I_1$ and hence the contour $C_1$. But this is impossible since $\eta_1$ is an unbounded component of $\RR^2\setminus C_1$. We deduce that the other component of $\RR^2\setminus I_1$ containing $C_1$ is also unbounded, and hence $I_1$ is a doubly infinite self-avoiding path. Similarly we can show that $I_2$ is also a doubly-infinite self-avoiding path.

 Therefore the statement holds when $\sN=1$.

Let $k\geq 1$ be a positive integer. We assume that
\begin{itemize}
\item I holds when $\sN=k$.
\end{itemize}

Now assume that $\sN=k+1$. Let $C_1,\ldots, C_{k+1}$ be all the infinite contours in $\mathcal{C}_2$. By the induction hypothesis,  $\RR^2\setminus [\cup_{i=1}^k C_k]$ has $k+1$ unbounded components, denoted by $\eta_1,\ldots,\eta_{k+1}$.

Without loss of generality, assume that $C_{k+1}\subset \eta_{k+1}$. Let $\xi_1$ and $\xi_2$ be the two unbounded components of $\RR^2\setminus C_{k+1}$. Then we claim that $\eta_1,\ldots, \eta_k$, $\eta_{k+1}\cap \xi_1$, $\eta_{k+1}\cap\xi_2$ are all the unbounded components in $\RR^2\setminus \cup_{i=1}^{k+1}C_i$. To see why that is true, note first that for $1\leq i\leq k$, $\eta_i$ is an unbounded component of $\RR^2\setminus\cup_{i=1}^{k+1}C_i$.

For $\eta_{k+1}\cap \xi_1$ the following cases might occur
\begin{enumerate}[label=(\alph*)]
\item $\xi_1$ contains none of $C_1,\ldots, C_k$.
\item $\xi_1$ contains at least one of $C_1,\ldots, C_k$.
\end{enumerate}

In Case (a), we claim that $\eta_{k+1}\cap \xi_1=\xi_1$, which is an unbounded component of $\RR^2\setminus \cup_{i=1}^{k+1}C_i$. To see why that is true, we first show that $\xi_1\subset \eta_{k+1}$. Indeed, since $\xi_1$ is an unbounded component of $\RR^2\setminus C_{k+1}$, and $C_{k+1}\in \eta_{k+1}$, there exist a vertex $v\in \xi_1\cap\eta_{k+1}\cap \ZZ^2$ where the Euclidean distance of $v$ to $C_{k+1}$ is $\frac{1}{2}$. Hence $\xi_1\cap\eta_{k+1}\neq \emptyset$. Since $\xi_1$ is an unbounded component of $\RR^2\setminus C_{k+1}$, if $\xi_1$ is not a subset of $\eta_{k+1}$, there exist a pair of adjacent vertices $u,w\in\xi_1\cap\ZZ^2$, such that $u\in \eta_{k+1}$ and $v\notin\eta_{k+1}$.
Then the edge $\langle u,v \rangle$ of $G$ must cross one of the contours $C_1,\ldots C_k$. But this is impossible since $\xi_1$ contains none of $C_1,\ldots C_k$.

In Case (b), by \Cref{p34}, $\eta_{k+1}\cap\xi_1$ is a connected component of $\RR^2\setminus \cup_{i=1}^{k+1}C_i$. We claim that $\eta_{k+1}\cap \xi_1$ is unbounded. By \Cref{cc} III, it suffices to show that $\eta_{k+1}\cap \xi_1$ contains a cluster incident to two infinite contours. Since $\xi_1$ is an unbounded component of $\RR^2\setminus C_{k+1}$, $C_{k+1}\in \eta_{k+1}$, and $\xi_1$ contains at least one of $C_1,\ldots, C_k$, we can find a vertex $p\in\xi_1\cap \eta_{k+1}\cap\ZZ^2$ such that $p$ is adjacent to $C_{k+1}$ and a vertex $q\in \xi_1\cap \cap\ZZ^2$ such that $q$ is adjacent to one of $C_1,\ldots C_k$, and a path $\ell_{pq}$ connecting $p$ and $q$ and consisting of edges of $G$, such that $\ell_{pq}$ crosses no contours in $C_1,\ldots,C_{k+1}$. All the vertices along $\ell_{pq}$ are in a cluster incident to two distinct infinite contours, and are contained in $\eta_{k+1}\cap\xi_1$. Hence $\eta_{k+1}\cap \xi_1$ is an unbounded component of $\RR^2\setminus \cup_{i=1}^{k+1}C_i$.

Similarly, we can show that $\eta_{k+1}\cap\xi_2$ is an unbounded component of $\RR^2\setminus \cup_{i=1}^{k+1}C_i$. Moreover, $\RR^2\setminus \cup_{i=1}^{k+1}C_i$ has no unbounded components other than $\eta_1,\ldots, \eta_k$, $\eta_{k+1}\cap \xi_1$, $\eta_{k+1}\cap\xi_2$. Therefore, when $\sN=k+1$, $\RR^2\setminus \cup_{i=1}^{k+1}C_i$ has exactly $k+2$ unbounded components.

Note that by the induction hypothesis, there is an infinite component of
interface of some contour in $C_1,\ldots,C_k$ (resp.\ $C_{k+1}$) in each one
of $\eta_1,\ldots,\eta_{k+1}$ (resp.\ $\xi_1$, $\xi_2$). We will show that
there is an infinite component of interface of some contour in $C_1,\ldots,
C_{k+1}$ in each one of $\eta_{k+1}\cap\xi_1$ and $\eta_{k+2}\cap\xi_2$ as
well. Let $I$ be an infinite component of interface of some contour in
$C_1,\ldots,C_k$ in $\eta_{k+1}$. Then $I$ lies in either $\eta_{k+1}\cap
\xi_1$ or $\eta_{k+1}\cap\xi_2$. Without loss of generality, assume that $I
\subset\eta_{k+1}\cap \xi_1$. Let $I'$ be an infinite component of the
interface of $C_{k+1}$ in $\xi_2$. Since $C_{k+1}\subset\eta_{k+1}$, we have
$I'\subset \eta_{k+1}\cap \xi_2$. Note that for two collection of contours
$\mathcal{C}\subseteq \mathcal{C}'$, each component of interface of contours
in $\mathcal{C}$ is also a component of interface of contours in
$\mathcal{C}'$.

Now we prove Part II of the lemma. It suffices to show that for each positive integer $k\geq 1$, if $\sN\geq k$, then there exist at least $k$ infinite clusters.

Assume that $\sN\geq k$. Let $C_1,\ldots, C_k\in\mathcal{C}_2$ be $k$ infinite contours such that $\RR^2\setminus C_i$ has two unbounded components, for $1\leq k\leq i$. By Part I of the lemma, $\RR^2\setminus\cup_{i=1}^k C_i$ has $k+1$ unbounded components, and each unbounded component contains an infinite component of interface of some contour in $\mathcal{C}_2$. By \Cref{rl}, there is an infinite cluster in each unbounded component of $\RR^2\setminus\cup_{i=1}^k C_i$. Therefore the total number of infinite clusters is at least $k+1$. Then the proof is complete.
\end{proof}

\begin{lemma}\label{uic}Assume that every infinite contour in $\phi$ has at most two ends. If $\mathcal{C}_2=\emptyset$ and $\mathcal{C}_1\neq \emptyset$, then $\RR^2\setminus[\cup_{\mathcal{C}\in \mathcal{C}_1}C]$ has exactly one unbounded component. Moreover, the unbounded component of $\RR^2\setminus [\cup_{C\in\mathcal{C}_1\cup \mathcal{C}_2}C]$ contains an infinite cluster.
\end{lemma}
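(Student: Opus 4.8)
The plan is to establish the two assertions separately: first that $\RR^2\setminus[\cup_{C\in\mathcal{C}_1}C]$ has \emph{at most} one unbounded component, via the topological separation theorem \Cref{p34}, and then to exhibit an infinite cluster inside an unbounded component, which simultaneously shows that an unbounded component exists. Since $\mathcal{C}_2=\emptyset$, the set $\cup_{C\in\mathcal{C}_1\cup\mathcal{C}_2}C$ coincides with $\cup_{C\in\mathcal{C}_1}C$, so the two parts of the statement concern the same object; moreover, by \Cref{ec2} every infinite contour lies in $\mathcal{C}_0\cup\mathcal{C}_1\cup\mathcal{C}_2$, and then \Cref{lc} gives $\mathcal{C}_0=\emptyset$, so all infinite contours lie in $\mathcal{C}_1$.

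For uniqueness I would argue by contradiction. Suppose there are two distinct unbounded components $R_1,R_2$, and fix points $x\in R_1$ and $y\in R_2$. For each fixed $C\in\mathcal{C}_1$ we have $R_1,R_2\subseteq\RR^2\setminus C$, and since each $R_i$ is connected and unbounded it is contained in an unbounded component of $\RR^2\setminus C$; but by the definition of $\mathcal{C}_1$ there is only one such unbounded component, so both $x$ and $y$ lie in it. Hence no single contour $C\in\mathcal{C}_1$ separates $x$ from $y$. I would then invoke \Cref{p34} with $S=\RR^2$ and $G=\mathcal{C}_1$ to conclude that $\cup_{C\in\mathcal{C}_1}C$ does not separate $x$ from $y$ either, contradicting $x\in R_1$, $y\in R_2$.

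To apply \Cref{p34} I must verify its hypotheses. Distinct contours are distinct connected components of present edges, hence pairwise disjoint, so the common part $H$ of every pair is empty; condition (2) then holds vacuously once one notes that any two arcs between $x$ and $y$ lie inside a large closed disk, which is compact and simply connected. Local finiteness of the lattice gives that $\cup_{C\in\mathcal{C}_1}C$ meets every bounded ball in finitely many closed segments, so it is locally compact, verifying (3); and $\mathcal{C}_1$ is countable. Matching these precise topological hypotheses of the Basye theorem is the step I expect to demand the most care.

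For the existence of an infinite cluster I would pick any $C_1\in\mathcal{C}_1$ and let $\eta$ be the unique unbounded component of $\RR^2\setminus C_1$. By \Cref{l24}, applied with $\mathcal{C}=\{C_1\}$, the vertices of $\ZZ^2$ lying in $\eta$ are all connected in $G\setminus C_1$, and since $\eta$ is unbounded this produces an infinite component of $G\setminus C_1$; \Cref{icic} then yields an infinite cluster $\xi$ incident to $C_1$. Finally, since every edge of $G$ internal to a cluster joins two vertices of equal state, no such edge is crossed by a contour; thus $\xi$, viewed as a connected subset of $\RR^2$, is disjoint from $\cup_{C\in\mathcal{C}_1}C$ and therefore lies in a single component of its complement. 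As $\xi$ is infinite it is unbounded, so this component is unbounded, and by the uniqueness already proved it must be the unique unbounded component $R$. This establishes both that an unbounded component exists, completing the ``exactly one'' claim, and that $R$ contains the infinite cluster $\xi$.
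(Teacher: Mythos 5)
Your proof is correct, and its skeleton matches the paper's: both halves ultimately rest on the Basye separation theorem (\Cref{p34}). The differences are in packaging and in the existence step. For uniqueness, the paper constructs the unbounded component explicitly as $\cap_{C\in\mathcal{C}_1}f(C)$ and uses \Cref{p34} to show this intersection is connected, whereas you argue by contradiction that no two points of distinct unbounded components can be separated by the union; these are the same use of \Cref{p34} in different clothing. For the infinite cluster, the paper takes a vertex $v$ at distance $\tfrac12$ from some contour, splits into the cases $|\mathcal{C}_1|=1$ (dismissed as obvious) and $|\mathcal{C}_1|\geq 2$ (where it asserts $v$'s cluster is incident to two infinite contours and applies \Cref{cc} III), while you fix a single $C_1\in\mathcal{C}_1$ and run \Cref{l24} plus \Cref{icic} uniformly. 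Your route avoids the case split and the paper's somewhat underjustified incidence claim, at the cost of needing that the unbounded component of $\RR^2\setminus C_1$ contains infinitely many vertices of $\ZZ^2$ (true, since it must contain infinitely many open faces of $\LL_1$, each of which contains lattice points, but worth a sentence). Your closing observation that a cluster, viewed as a subset of $\RR^2$, is disjoint from all contours and hence sits in a single complementary component is exactly the right way to transfer the cluster from $\RR^2\setminus C_1$ to $\RR^2\setminus\cup_{C\in\mathcal{C}_1}C$, and is cleaner than what the paper writes.
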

\begin{proof}Recall that for each $C\in\mathcal{C}_1$, $f(C)$ is the unbounded component of $\RR^2\setminus C$.

If $\mathcal{C}_1\neq \emptyset$, then any infinite cluster must lie in $\cap_{C\in\mathcal{C}_1}f(C)$. We claim that $\cap_{C\in\mathcal{C}_1}f(C)$ is unbounded and contains an infinite cluster. Indeed, for each $C\in\mathcal{C}_1$, we can find a vertex $v$ of $\ZZ^2$ in $f(C)$ whose Euclidean distance to $C$ is $\frac{1}{2}$. Then $v\in \cap_{C\in\mathcal{C}_1}f(C)$. The lemma obviously holds when $|\mathcal{C}_1|=1$. If $|\mathcal{C}_1|\geq 2$, then $v$ is in a cluster incident to at least two infinite contours. By \Cref{cc} III, $v$ is in an infinite cluster. Hence $\cap_{C\in\mathcal{C}_1}f(C)$ is unbounded and contains an infinite cluster. By \Cref{p34}, $\cap_{C\in\mathcal{C}_1}f(C)$ is a connected component of $\RR^2\setminus [\cup_{C\in\mathcal{C}_1\cup \mathcal{C}_2}C]$.
\end{proof}

\begin{lemma}\label{bf}Assume that every infinite contour in $\phi$ has at most two ends. Assume that $\mathcal{C}_1\cup\mathcal{C}_2\neq \emptyset$, and $|\mathcal{C}_2|<\infty$. Then every unbounded component of $\RR^2\setminus [\cup_{C\in\mathcal{C}_1\cup \mathcal{C}_2}C]$ contains exactly one infinite cluster.
\end{lemma}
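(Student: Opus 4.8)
The plan is to fix an unbounded component $Q$ of $\RR^2\setminus[\cup_{C\in\mathcal{C}_1\cup\mathcal{C}_2}C]$ and establish the two halves separately: that $Q$ contains an infinite cluster (existence), and that any two infinite clusters in $Q$ coincide (uniqueness). First I would record the global structure. Since every infinite contour has at most two ends, \Cref{ec2} puts each of them in $\mathcal{C}_0\cup\mathcal{C}_1\cup\mathcal{C}_2$, and \Cref{lc} together with $\mathcal{C}_1\cup\mathcal{C}_2\neq\emptyset$ forces $\mathcal{C}_0=\emptyset$. Hence $\mathcal{C}_1\cup\mathcal{C}_2$ consists of \emph{all} infinite contours, and the boundary $\partial Q$ is a nonempty union of (parts of) infinite contours.

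For existence I would produce inside $Q$ an infinite component $I^*$ of the interface of some contour $C^*\in\mathcal{C}_1\cup\mathcal{C}_2$ with $C^*\subseteq\partial Q$; then \Cref{rl} yields an infinite cluster $\xi^*:=F_{I^*}\subseteq Q$, incident to $C^*$. When $1\leq|\mathcal{C}_2|<\infty$, \Cref{eo} supplies an infinite interface component $I^*$ in the corresponding unbounded component $R$ of $\RR^2\setminus[\cup_{C\in\mathcal{C}_2}C]$; since $I^*$ is a doubly-infinite path crossing no contour (\Cref{cin}) and each $C\in\mathcal{C}_1$ has a unique unbounded complementary component $f(C)$, the unbounded set $I^*$ lies in every $f(C)$, hence in $R\cap\bigcap_{C\in\mathcal{C}_1}f(C)=g(R)$, which is precisely the element $Q$ of the bijection from the proof of \Cref{l33}. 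When $\mathcal{C}_2=\emptyset$ (so $\mathcal{C}_1\neq\emptyset$), the unique unbounded component of \Cref{uic} equals $\bigcap_{C\in\mathcal{C}_1}f(C)=Q$, and any infinite interface component of a fixed $C^*\in\mathcal{C}_1$ (which exists because the thickened contour is unbounded while having nonempty complement) lies in every $f(C)$ by the same reasoning, hence in $Q$.

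For uniqueness, let $\xi$ be an arbitrary infinite cluster in $Q$ and pick $x\in\xi$. Choose a vertex $z_0\in\xi^*=F_{I^*}$; by construction $z_0$ lies at Euclidean distance $\tfrac12$ from a midpoint $y$ of a present edge of $C^*$, and $z_0$ is the $Q$-side endpoint of the $G$-edge crossing $C^*$ at $y$. Using \Cref{l24} with $\mathcal{C}=\mathcal{C}_1\cup\mathcal{C}_2$, I join $x$ to $z_0$ by a path in $G$ lying in $Q$, and append the half-edge from $z_0$ to $y$; the resulting path $p_{xy}$ crosses no infinite contour except at $y$. Then \Cref{isc} gives that the first vertex of $\ZZ^2$ along $p_{xy}$ starting from $y$, namely $z_0$, lies in $\xi$. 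Since $z_0\in\xi^*$ and distinct clusters are disjoint, $\xi=\xi^*$. Thus $\xi^*$ is the unique infinite cluster of $Q$, which is the assertion.

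I expect the main obstacle to be the existence half: verifying that the infinite interface component really lands in the finer region $Q$ rather than merely in the coarser region $R$ of $\RR^2\setminus[\cup_{C\in\mathcal{C}_2}C]$. This rests on the fact that each $\mathcal{C}_1$-contour is one-sided (its complement has a single unbounded component), so adjoining it only trims bounded pieces and keeps the unbounded interface inside $g(R)=Q$; making this compatible with the bijection of \Cref{l33} is the delicate point. A secondary piece of bookkeeping is in the uniqueness step: one must choose the boundary point $y$ so that its $Q$-side neighbor is exactly the vertex $z_0\in\xi^*$, so that \Cref{isc} identifies $\xi$ with $\xi^*$ itself rather than with some unspecified cluster incident to $C^*$.
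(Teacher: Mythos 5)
Your proof is correct. The existence half coincides with the paper's: both place an infinite interface component inside the given unbounded component via \Cref{eo} and \Cref{l33} (or \Cref{uic} when $\mathcal{C}_2=\emptyset$) and then apply \Cref{rl}; you simply spell out the step that the doubly-infinite interface, being unbounded, connected, and disjoint from all contours, lies in $f(C)$ for every $C\in\mathcal{C}_1$ and hence in $g(R)=Q$. The uniqueness half is genuinely different. The paper argues by contradiction: it joins representatives $x,y$ of two putative infinite clusters by a $G$-path inside $\eta$ (using \Cref{l24}), notes that this path crosses only finite contours and that no single contour---finite or infinite---separates $x$ from $y$, and then invokes \Cref{p34} together with \Cref{l24} again to produce a contour-free path, so the two clusters coincide. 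You instead single out the concrete cluster $\xi^*=F_{I^*}$ sitting against a boundary contour $C^*$ and use \Cref{isc} to show that every infinite cluster of $Q$ contains the vertex $z_0\in\xi^*$ facing $C^*$ at the chosen crossing point, hence equals $\xi^*$. Your route buys a concrete identification of the unique cluster and sidesteps the separation argument via \Cref{p34}, which in the paper's write-up is applied a little loosely (to conclude that the clusters agree one needs the final path to avoid \emph{all} contours, not just the finitely many crossed by $\ell_{xy}$, though this is easily repaired since finite contours never separate points of infinite clusters); the cost is the extra bookkeeping, which you correctly flag, of ensuring that $z_0$ lies in $Q$ and is the first $\ZZ^2$-vertex of $p_{xy}$ after $y$.
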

\begin{proof}By \Cref{rl,l33,eo} (for the case $\mathcal{C}_2\neq\emptyset$) and \Cref{uic} (for the case $\mathcal{C}_2=\emptyset$) every unbounded component of $\RR^2\setminus[\cup_{C\in\mathcal{C}_1\cup \mathcal{C}_2}C]$ contains at least one infinite cluster.

Let $\eta$ be an unbounded component of $\RR^2\setminus[\cup_{C\in\mathcal{C}_1\cup \mathcal{C}_2}C]$. By \Cref{lc}, $\eta$ contains no infinite contours. .

 Assume that $\eta$ contains two infinite clusters $\xi_1$ and $\xi_2$; we will obtain a contradiction. Let $x\in\xi_1\cap\ZZ^2$ and $y\in\xi_2\cap\ZZ^2$ be two vertices in $\xi_1$ and $\xi_2$, respectively. We can find a path $\ell_{xy}$ in $\eta$ connecting $x$ and $y$, and consisting of edges of $G$. Since $\eta$ contains no infinite contours, $\ell_{xy}$ crosses only finite contours. Moreover, $\ell_{xy}$ crosses finitely many finite contours in total; let $C_1,\ldots, C_k$ be all the finite contours crossed by $\ell_{xy}$. Note that for $1\leq i\leq k$, $\RR^2\setminus C_i$ has exactly one unbounded component. Moreover, this unbounded component of $\RR^2\setminus C_i$ contains both $x$ and $y$. Since any $C_i$ ($1\leq i \leq k$) or $C\in\mathcal{C}_1\cup\mathcal{C}_2$ cannot separate $x$ from $y$ in $\RR^2$, then $x$ and $y$ are in the same unbounded component of $\RR^2\setminus [\cup_{i=1}^k C_i]\cup\cup_{C\in{\mathcal{C}_1\cup\mathcal{C}_2}}C]$. Indeed, $x$ and $y$ are in the same unbounded component of $\eta\setminus [\cup_{i=1}^k C_i]$. By \Cref{l24}, $x$ and $y$ are in the same infinite cluster. But this is impossible. The contradiction implies that $\eta$ contains exactly one infinite cluster.
\end{proof}

\begin{lemma}\label{eoi}Assume that for every infinite contour  in $\phi$ has at most two ends. If $1\leq \sN<\infty$, then the total number of infinite clusters in $\sN+1$.
\end{lemma}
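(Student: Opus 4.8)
The plan is to assemble the count directly from \Cref{eo}, \Cref{l33}, and \Cref{bf}, which together pin down both the number of unbounded components of the complement of the union of all infinite contours and the number of infinite clusters inside each such component. Since $1\le \sN<\infty$ we have $\mathcal{C}_2\neq\emptyset$, so in particular $\mathcal{C}_1\cup\mathcal{C}_2\neq\emptyset$ and $|\mathcal{C}_2|<\infty$, which puts us squarely in the hypotheses of all three lemmas. By \Cref{lc}, $\mathcal{C}_0=\emptyset$, so $\mathcal{C}_1\cup\mathcal{C}_2$ is the set of \emph{all} infinite contours of $\phi$; this is what lets us talk about ``the'' unbounded components without worrying about stray infinite contours in $\mathcal{C}_0$.

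First I would count the unbounded components. By Part I of \Cref{eo}, $\RR^2\setminus[\cup_{C\in\mathcal{C}_2}C]$ has exactly $\sN+1$ unbounded components; that is, in the notation of \Cref{l33}, $m_2=\sN+1$. Applying \Cref{l33} (whose hypotheses hold, since every infinite contour has at most two ends and $\mathcal{C}_1\cup\mathcal{C}_2\neq\emptyset$) gives $m=m_2$, so $\RR^2\setminus[\cup_{C\in\mathcal{C}_1\cup\mathcal{C}_2}C]$ also has exactly $\sN+1$ unbounded components.

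Next I would transfer this count to infinite clusters via \Cref{bf}: under the present hypotheses every unbounded component of $\RR^2\setminus[\cup_{C\in\mathcal{C}_1\cup\mathcal{C}_2}C]$ contains exactly one infinite cluster, so these $\sN+1$ components account for exactly $\sN+1$ infinite clusters. To finish I must check that no infinite cluster is missed, i.e.\ that every infinite cluster lies in one of these unbounded components. This is the only content beyond bookkeeping, and it is short: any cluster, being monochromatic and connected, cannot cross any contour, since two adjacent vertices differ in state precisely when the edge between them crosses a present edge of $\phi$. Hence each cluster lies in a single component of $\RR^2$ minus the union of \emph{all} contours, and a fortiori in a single component of the coarser partition $\RR^2\setminus[\cup_{C\in\mathcal{C}_1\cup\mathcal{C}_2}C]$; an infinite cluster cannot be contained in a bounded component, so it lies in one of the $\sN+1$ unbounded components. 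Combining this with the exactly-one-per-component statement of \Cref{bf} yields that the total number of infinite clusters is exactly $\sN+1$.

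The step I expect to require the most care is the bijection itself: making sure the $\sN+1$ unbounded components correspond to infinite clusters without double-counting or omission. The upper and lower bounds (at most and at least one per component) are both delivered by \Cref{bf}, and completeness (every infinite cluster sits in some unbounded component) by the monochromaticity argument above, so no genuinely new geometric obstacle arises — the lemma is essentially a corollary of the preceding results, and the main risk is merely citing \Cref{l33}, \Cref{eo}, and \Cref{bf} with their hypotheses correctly aligned.
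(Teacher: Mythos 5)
Your proposal is correct and follows essentially the same route as the paper's proof: \Cref{lc} to rule out $\mathcal{C}_0$, \Cref{eo} and \Cref{l33} to count the unbounded components of the complement of all infinite contours as $\sN+1$, and \Cref{bf} to get exactly one infinite cluster per component. The only addition is your explicit monochromaticity argument that every infinite cluster lies in one of these unbounded components, which the paper simply asserts.
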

\begin{proof} By \Cref{lc}, $\mathcal{C}_1\cup\mathcal{C}_2$ contains all the infinite contours. Note that every infinite cluster must lie in an unbounded component of $\RR^2\setminus [\cup_{C\in\mathcal{C}_1\cup\mathcal{C}_2}C]$.
By \Cref{bf}, the number of infinite clusters is equal to the number of unbounded components of $\RR^2\setminus [\cup_{C\in\mathcal{C}_1\cup\mathcal{C}_2}C]$. When $1\leq \sN<\infty$, the number of unbounded components in $\RR^2\setminus [\cup_{C\in\mathcal{C}_1\cup\mathcal{C}_2}C]$ is exactly $\sN+1$ by \Cref{l33,eo}.
\end{proof}

\medskip
\noindent\textbf{Proof of \Cref{m1}.} By (A2), either $\mu(\sA)=0$ or
$\mu(\sA)=1$. Assuming that $\mu(\sA)=1$, we will obtain a contradiction.

By (A2), there exists an integer $1\leq k< \infty$, such that $\mu$-a.s.
there exist $k$ infinite 1-clusters. By (A3), $\mu$-a.s.\ there exist $k$
infinite 0-clusters as well. Therefore the total number of infinite clusters
is $\mu$-a.s.\ even; moreover, $\mu$-a.s.\ there exists an infinite 0-cluster
and an infinite 1-cluster. By \Cref{io}, $\mu$-a.s.\ there exist infinite
contours. By \Cref{att,lc}, $\mathcal{C}_1\cup\mathcal{C}_2\neq \emptyset$.

If $\mathcal{C}_2=\emptyset$, by \Cref{bf}, there exists exactly one infinite
cluster; but this is impossible since the total number of infinite clusters
in $\mu$-a.s.\ even.

If $\mathcal{C}_2\neq \emptyset$, by Part II of \Cref{eo} and the assumption
that $\mu(\sA)=1$, we have $1\leq \sN=|\mathcal{C}_2|<\infty$. By (A1) (A2),
$\mu$-a.s.\ the number of primal contours in $\mathcal{C}_2$ and dual
contours in $\mathcal{C}_2$ are equal. Hence  $\sN\geq 2$ is even $\mu$-a.s.
By \Cref{eoi}, the total number of infinite clusters is $\sN+1$, which is
odd. But this is a contradiction to the fact that the total number of
infinite clusters is $\mu$-a.s.\ even.

Now we have completed the proof that $\mu(\sA)=1$ and $\mu$-a.s.\
$|\mathcal{C}_2|$ is 0 or $\infty$. By \Cref{lc}, if $\mathcal{C}_0\neq
\emptyset$, then there is a unique infinite contour; moreover, this infinite
contour lies in $\mathcal{C}_0$. By (A1) and (A2), $\mu$-a.s.\ the number of
primal contours in $\mathcal{C}_0$ and dual contours in $\mathcal{C}_0$ are
equal. Hence  $\mu$-a.s.\ the number of infinite contours in $\mathcal{C}_0$
is even. The contradiction implies that $\mu$-a.s.\
$\mathcal{C}_0=\emptyset$.

The fact that $\mu$-a.s.\ $\mathcal{C}_1\cup\mathcal{C}_2$ contains all the
infinite contours follows from \Cref{att,ec2}. \hfill$\Box$

\section{Uniqueness of the Infinite Primal/Dual Contour under the Finite Energy Assumption}\label{pm4}

In this section, we prove \Cref{m4}.  Let $\mu$ be a probability measure on $\Omega$ satisfying Assumptions (A1), (A2) and (A4).

Let $B_{m,n}^*$ be an $m\times n$ box of $\LL_2$. Assume that the location of $B_{m,n}^*$ satisfies the following condition:
\begin{itemize}
\item $B_{1,1}^*$ consists of the square face of $\LL_2$ containing the origin.
\item when $n$ is odd (resp.\ even), $B_{m,n+1}^*$ is obtained from $B_{m,n}^*$ by adding one column of $m$ square faces of $\LL_2$ to the right (resp.\ left) of $B_{m,n}^*$.
\item when $m$ is odd (resp.\ even), $B_{m+1,n}^*$ is obtained from $B_{m,n}^*$ by adding one row of $n$ square faces of $\LL_2$ to the bottom (resp.\ top) of $B_{m,n}^*$.
\end{itemize}
  Let $B_{m-1,n-1}$ be the interior dual graph of $B_{m,n}^*$, i.e., $B_{m-1,n-1}$ is the subgraph of $\LL_1$ enclosed by $\partial B_{m,n}^*$.

\begin{lemma}\label{sp}Let $\phi$ be a primal contour configuration. Assume that the boundary $\partial B_{3,3}^*$ crosses present edges in $\LL_1$ an even number of times. Then we can change states of edges in $B_{2,2}$ in such a way that all the present edges in $\LL_1$ crossing $\partial B_{3,3}^* $ are in the same contour, and each vertex of $B_{2,2}$ has an even number of incident present edges.
\end{lemma}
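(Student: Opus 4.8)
The plan is to reduce the statement to a purely combinatorial fact about the $3\times 3$ grid. Note that $B_{2,2}$ is exactly the $3\times 3$ grid of vertices of $\LL_1$ lying inside $\partial B_{3,3}^*$, and the edges of $\LL_1$ crossing $\partial B_{3,3}^*$ are precisely the twelve outward ``stubs'' attached to the eight boundary vertices of this grid: two from each of the four corner vertices, and one from each of the four edge-midpoint vertices. Since we are only permitted to modify the twelve internal edges of $B_{2,2}$, the states of these stubs are fixed, and the two requirements of the lemma become: choose a set $T$ of internal edges so that (i) every vertex of the grid has an even total number of incident present edges, counting both edges of $T$ and present stubs, and (ii) all present stubs lie in a single connected component of the present-edge configuration. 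Summing degrees shows that (i) can hold only when the number of present stubs is even, which is exactly the hypothesis that $\partial B_{3,3}^*$ crosses present edges an even number of times; conversely, this parity hypothesis guarantees that a $T$ with the correct degree parities exists, because the set $O$ of vertices carrying an odd number of present stubs then has even cardinality, and the grid admits a subgraph having odd degree exactly on $O$ (a $T$-join for $O$).

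For (ii) I would first secure connectivity and then repair parity. Switching on the full boundary $8$-cycle $C_8$ of the grid yields a connected subgraph meeting every boundary vertex, hence joining all present stubs into one contour and establishing (ii); moreover $C_8$ contributes degree $2$ to every boundary vertex, so that the only vertices of wrong parity are exactly those of $O$. It then remains to correct these parities without destroying connectivity, which I would do by taking $T = C_8 \mathbin{\triangle} J$ for a $T$-join $J$ of $O$ chosen to route through the central vertex via the four spoke edges, using the spokes to flip the parity of the edge-midpoint vertices and short boundary toggles to flip the corners. Deleting a single boundary edge merely turns $C_8$ into a spanning path, so the present stubs remain in one component throughout.

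The main obstacle is precisely this simultaneous control of parity and connectivity in step (ii); the most delicate configuration is a corner vertex bearing two present stubs, which already has even parity and therefore receives no forced internal edge from the $T$-join, yet must still be attached to the common component, and one must also ensure that the boundary toggles needed at several corners do not inadvertently isolate an intervening vertex from $C_8$. Because there are only finitely many stub patterns and the grid carries the full dihedral symmetry group $D_4$ of order $8$, I expect the cleanest rigorous route to be to combine the backbone-plus-$T$-join construction above with a finite verification over the resulting short list of symmetry-inequivalent patterns, checking in each case that the prescribed $T$ satisfies both (i) and (ii).
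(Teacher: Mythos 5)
Your reduction of the lemma to a finite combinatorial problem on the $3\times 3$ grid $B_{2,2}$ with twelve fixed external stubs is correct, and your parity analysis for requirement (i) (the $T$-join existence, and the observation that the even-crossing hypothesis is exactly what makes $|O|$ even) is sound. You have also correctly isolated the two genuinely delicate points: a corner carrying two present stubs is stub-bearing but parity-neutral, so nothing forces an internal edge at it, and boundary toggles can fragment the backbone $C_8$. However, the proof is not complete, because the construction that is supposed to resolve these difficulties is left underdetermined, and some natural instantiations of it fail. Concretely: suppose all four corners are single corners lying in $O$ and no side-midpoint is in $O$. If each corner's parity is repaired by a ``short boundary toggle'' and two pairs of corners happen to toggle the two edges meeting at a common side-midpoint (say $(1,1),(1,-1)$ both toggle their edges to $(1,0)$, and $(-1,1),(-1,-1)$ both toggle their edges to $(-1,0)$), then those side-midpoints are flipped twice, receive no repairing spoke, and $T=C_8\mathbin{\triangle}J$ splits into two arcs, each containing two stub-bearing corners --- requirement (ii) fails. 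A different choice of toggles (all corners toggling ``clockwise'', so that every side-midpoint is hit exactly once and all four spokes enter $T$) does work for this pattern, but your recipe does not specify which choice to make, and the final sentence of your argument explicitly defers the verification to an unexecuted case check over roughly $3^4\cdot 2^4/|D_4|$ stub patterns. Since exhibiting a valid $T$ for every pattern is precisely the content of the lemma, this is a genuine gap rather than a routine detail.

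It is worth comparing with the paper's route, which is also a finite case analysis but organized so that the parity bookkeeping disappears. The paper parametrizes even-degree primal contour configurations by site configurations on $V(\LL_2)$: the boundary data is encoded by a site configuration $\rho$ on the outer ring $U\setminus V$ of the $4\times4$ block of dual sites, and the only freedom is the choice of $\rho'$ on the inner $2\times 2$ block $V$ ($2^4$ choices). Every such choice automatically yields even degree at every vertex of $B_{2,2}$, so only connectivity must be arranged; the rule $\rho'(w_i)=\rho(u_i)$ turns every double corner into a degree-$4$ vertex (solving your most delicate case by fiat), and the remaining analysis collapses to the number of double corners and their relative ``parities'', with only the case of two double corners of opposite parity requiring a picture. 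If you want to complete your primal-edge approach, I would recommend either importing this dual-site parametrization, or at minimum fixing a canonical, orientation-consistent choice of boundary toggle at each corner in $O$ (so that no side-midpoint is toggled twice) and then verifying connectivity of $T=C_8\mathbin{\triangle}J$ pattern by pattern, including the double-corner patterns.
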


The proof of \Cref{sp} involves straightforward (but tedious) case analysis - see \Cref{p51}.

\begin{lemma}\label{spm}Let $\phi$ be a primal contour configuration. Assume that $\partial B_{M,N}^*$ crosses present edges of $\LL_1$ an even number of times. When $M\geq 3$, $N\geq 3$, we can change states of edges of $B_{M-1,N-1}$, in such a way that all the present edges of $\LL_1$ crossing $\partial B_{M,N}^*$ are in the same contour, and each vertex of $B_{M-1,N-1}$ has an even number of incident present edges.
\end{lemma}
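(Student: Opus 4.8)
The plan is to prove \Cref{spm} by induction on $M+N$, taking \Cref{sp} as the base case $M=N=3$ and shrinking the box by one row or column at each step. Throughout, the only property I must preserve is membership in $\Phi_1$, i.e.\ that every vertex of $\LL_1$ have an even number of incident present edges; the states of all edges outside $B_{M-1,N-1}$, and in particular the present edges crossing $\partial B_{M,N}^*$, are never altered.

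A preliminary observation makes the parity hypothesis automatic and is used repeatedly. Since $\phi\in\Phi_1$, every vertex of $\LL_1$ has even degree. For any region $U$ bounded by a closed curve of edges of $\LL_2$, the present edges of $\LL_1$ crossing $\partial U$ are exactly those having precisely one endpoint among the $\LL_1$-vertices enclosed by $\partial U$, so their number is congruent modulo $2$ to the sum of the degrees of those enclosed vertices, which is even. Hence every window $B_{3,3}^*$ whose interior $B_{2,2}$ lies inside $B_{M-1,N-1}$ satisfies the hypothesis of \Cref{sp}, both for $\phi$ and for any configuration obtained from it by the allowed even-degree-preserving modifications; so \Cref{sp} may be invoked on any such window at any stage.

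For the inductive step, assume the statement for all admissible smaller boxes and suppose without loss of generality that $N\geq 4$ (the case $M\geq 4$, $N=3$ being symmetric), so that $B_{M,N}^*$ is obtained from the admissible box $B_{M,N-1}^*$ by adjoining one column of $M$ faces along one side. First I apply the induction hypothesis to $B_{M,N-1}^*$, gathering all present edges crossing $\partial B_{M,N-1}^*$ into a single contour while fixing every edge outside $B_{M-1,N-2}$. It then remains to handle the adjoined column: the old boundary $\partial B_{M,N-1}^*$ is now interior to $B_{M,N}^*$, so I must both absorb its crossings into the contour and attach the new crossings of $\partial B_{M,N}^*$ lying on the column. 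To do this I march a finite sequence of overlapping translates of $B_{3,3}^*$ along the strip between $\partial B_{M,N-1}^*$ and $\partial B_{M,N}^*$, applying \Cref{sp} to each in turn. Each application consolidates the crossings of that window into one contour; choosing consecutive windows to overlap along an edge that the previous step has made present forces the successive single contours to coincide, so that all crossings of $\partial B_{M,N}^*$ finish in one contour.

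The main obstacle is interference. Applying \Cref{sp} to a window rewrites the edges of its interior $B_{2,2}$, and I must order and position the windows so that a later application never disconnects a piece of contour fixed by an earlier one, and so that the edge along which two consecutive windows are glued is genuinely present at the instant of gluing. This is precisely the bookkeeping that renders the base case \Cref{sp} a tedious case analysis. The resolution is to sweep in a single direction along the strip using windows whose interiors, once finalized, are disjoint from those of all remaining windows, exploiting the freedom in \Cref{sp} to route the consolidated contour through the chosen overlap edge. Verifying that such a sweep exists, that it meets every crossing of $\partial B_{M,N}^*$, and that every intermediate configuration stays in $\Phi_1$ completes the induction.
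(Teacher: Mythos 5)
There is a genuine gap, and it comes from the order in which you perform the two operations. You first invoke the induction hypothesis on the inner box $B_{M,N-1}^*$ and only afterwards operate in the adjoined strip. But the strip is one dual face wide while a window $B_{3,3}^*$ spans three dual faces, so every window you place to cover a strip crossing necessarily reaches two layers into the inner box; applying \Cref{sp} there rewrites the corresponding $B_{2,2}$ arbitrarily and can sever the single contour that the inductive call just produced, since you have no control over where that contour runs inside $B_{M,N-2}$. There is also a problem with the gluing itself: \Cref{sp} modifies only the interior edges $F=E(B_{2,2})$ and never the crossing edges of $\partial B_{3,3}^*$, so you cannot ``make present'' an edge of the window boundary along which to attach the next window. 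The only way an edge can be created by one application and serve as a crossing edge for the next is for the two interiors to overlap, which reintroduces the destruction problem; if instead the interiors are disjoint, there is no guaranteed present edge joining the two consolidated components. Your final sentence (``Verifying that such a sweep exists \dots completes the induction'') defers exactly this difficulty, which is the entire content of the inductive step, so as written the proposal is a plan rather than a proof.

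The paper's proof reverses the order and thereby avoids the interference issue altogether. In the strip $B_{k+1,l}^*\setminus\widehat{B}_{k,l}^*$ it performs an explicit surgery: erase \emph{all} interior edges of $B_{k,l-1}$, extend each present horizontal crossing of the new boundary segment one step inward so that it becomes a crossing of $\partial\widehat{B}_{k,l}^*$, and repair the parity at the two corner vertices $p,q$ of the strip by a short case analysis. This yields a configuration in which every crossing of the outer boundary is joined, within the strip, to a crossing of the inner boundary, the inner boundary is crossed an even number of times, and all strip vertices have even degree. Only then is the induction hypothesis applied to $\widehat{B}_{k,l}^*$ — and at that point it is free to rewrite the entire interior of the inner box, because nothing established so far depends on edges there. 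If you want to keep your window-sweeping idea you would need a strengthened version of \Cref{sp} that preserves prescribed connectivity through the window, which the lemma as stated does not provide; the simpler fix is to adopt the paper's order: do the strip surgery first, then recurse.
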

\begin{proof}We prove the lemma by induction on $M,N$. First of all, the lemma holds when $M=N=3$ by \Cref{sp}.

Assume that the lemma is true when $M=k$, $N=l$, where $k,l\geq 3$. Now we prove the lemma when $M=k+1$ and $N=l$.

We use $\widehat{B}_{k,l}^*$ to denote the identical copy of $B_{k,l}^*$ obtained from $B_{k+1,l}^*$ by removing the left column. The boundary $\partial B_{k,l}^*$ consists of four line segments $L_N$ (the north boundary), $L_S$ (the south boundary), $L_{W}$ (the west boundary) and $L_{E}$ (the east boundary), such that $L_{N}\cup L_{S}\cup L_{E}\subset \partial B_{k+1,l}^*$. We change states of edges of $\LL_1$ enclosed by $\partial B_{k+1,l}^*$, following the steps below.
\begin{enumerate}[label=\Roman*.]
\item Make all the edges of $B_{k,l-1}$ absent.
\item For each present horizontal edge $e$ of $\LL_1$ crossing $\partial B_{k+1,l}^*\setminus[L_N\cup L_S\cup L_E]$, let $v$ be the endpoint of $e$ within $B_{k+1,l}^*$ and $f$ be the other horizontal edge with endpoint $v$. Make $f$ present.
\item Let $p$ and $q$ be the two corner vertices of $B_{k,l-1}$ in $B_{k+1,l}^*\setminus \widehat{B}_{k,l}^*$. If after the above two steps, both $p$ and $q$ have an odd number of incident present edges, make all the edges along the line segment $[p,q]$ present.
\item Now we consider the case that after the first two steps, exactly
    one of $p$ and $q$ has an odd number of incident present edges; or
    equivalently, exactly one vertical edge incident to $p$ or $q$
    crossing $\partial B_{k+1,l}^*$ is present originally. Let $e$ be the
    present vertical edge of $\LL_1$ crossing $\partial
    B_{k+1,l}^*\setminus[L_N\cup L_S\cup L_E]$. Let $p$ be the endpoint
    of $e$ within $B_{k+1,l}^*$. Let $f_1$ (resp.\ $f_2$) be the
    horizontal edge of $\LL_1$ with endpoint $p$ on the left (resp.
    right) of $e$. If $f_1$ is absent after step II., make $f_2$ present.
    If $f_1$ is present after step II., let $f_3$ be the other vertical
    edge of $\LL_1$ with endpoint $p$; make $f_3$ present. Let $u$ be the
    other endpoint of $f_3$, let $f_4$ (resp.\ $f_5$) be the horizontal
    edge with endpoint $u$ on the left (resp.\ right) of $f_3$. If $f_4$
    is absent, make $f_5$ present. If $f_4$ is present, make $f_5$
    absent.
\end{enumerate}
After the configuration changing process described above, we obtain a configuration satisfying the following conditions.
\begin{enumerate}[label=\roman*.]
\item Each vertex of $\LL_1$ in $B_{k+1,l}^*\setminus \widehat{B}_{k,l}^*$ has an even number of incident present edges.
\item Each present edge in $\LL_1$ crossing $\partial B_{k+1,l}^*$ is in the same contour as a present edge in $\LL_1$ crossing $\partial \widehat{B}_{k,l}^*$.
\item The boundary $\partial\widehat {B}_{k,l}^*$ crosses present edges in $\LL_1$ an even number of times.
\end{enumerate}
See Figure \ref{fig:mg} for an illustration of such a configuration changing process.

\begin{figure}
\subfloat[\textrm{Original}]{\includegraphics[width=.3\textwidth]{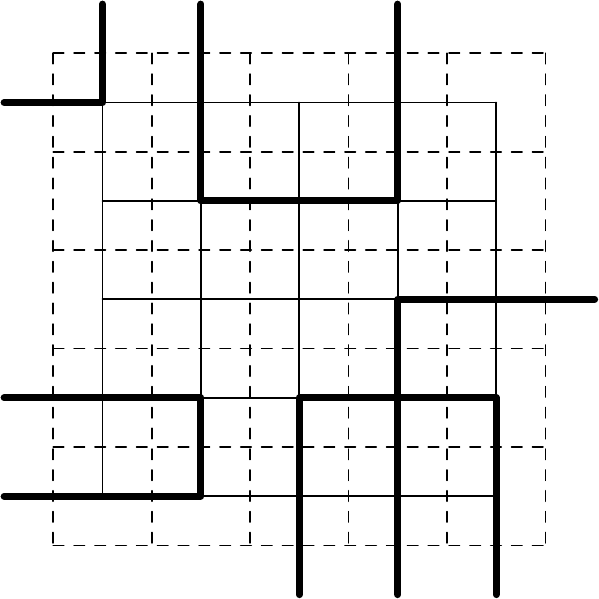}}\qquad\qquad\qquad\qquad
\subfloat[\textrm{Step} 1]{\includegraphics[width = .3\textwidth]{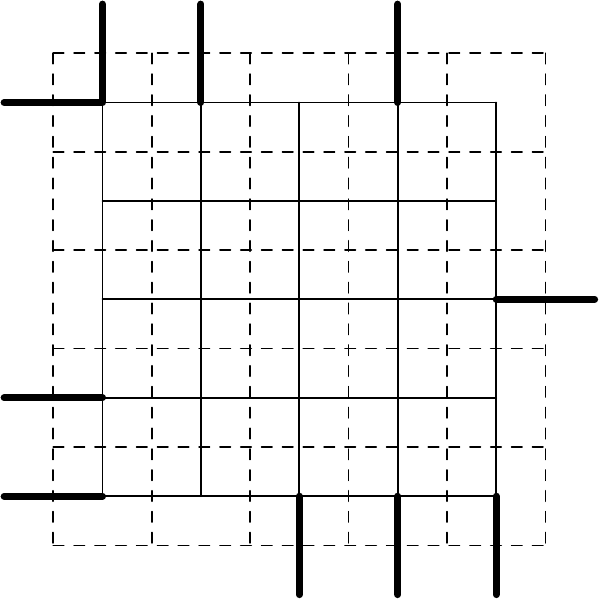}}\\
\subfloat[\textrm{Step} 2]{\includegraphics[width = .3\textwidth]{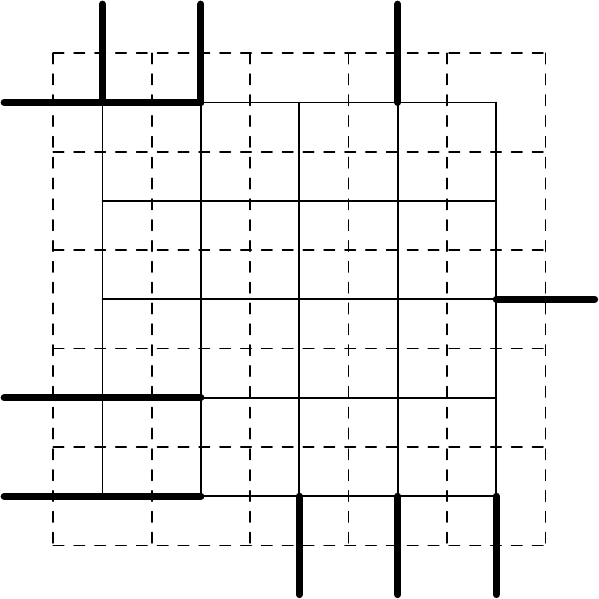}} \qquad\qquad\qquad\qquad
\subfloat[\textrm{Step} 4]{\includegraphics[width = .3\textwidth]{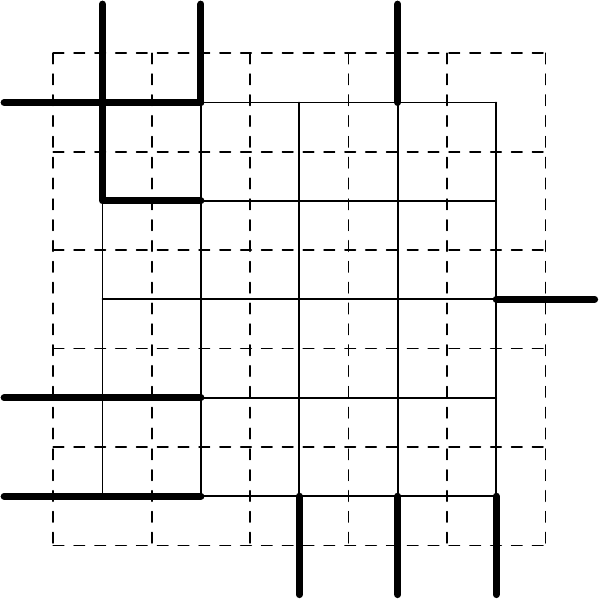}}
\caption{Merging finitely many infinite primal contours into one infinite primal contour: from $B_{k+1,l}^*$ to $B_{k,l}^*$. Dashed lines represent $B_{k+1,l}^*$; solid lines represent $B_{k,l-1}$; thick solid lines represent primal contours.}
\label{fig:mg}
\end{figure}

By induction hypothesis, we can change states of edges enclosed by $\partial \widehat{B}_{k,l}^*$ such that all the present edges of $\LL_1$ crossing $\partial \widehat{B}_{k,l}^*$ are in the same contour, and every vertex of $\LL_1$ enclosed by $\partial \widehat{B}_{k,l}^*$ have an even number of incident present edges. Combining with conditions i.--iii., we infer that the lemma is true when $M=k+1$, $N=l$. The case $M=k$, $N=l+1$ can be proved analogously.
\end{proof}

\begin{lemma}\label{l44}Let $M\geq 3,N\geq 3$. If two primal contour configurations on $B_{M-1,N-1}$ have the same configuration on the boundary (i.e., present and absent edges crossing $\partial B^*_{M,N}$ are the same in the two configurations), then one configuration can be obtained from the other configuration by changing configurations on squares of $B_{M-1,N-1}$. Here by changing configurations on a square, we mean making every present edge around a square face of $\LL_1$ absent, and making every absent edge around a square face of $\LL_1$ present.
\end{lemma}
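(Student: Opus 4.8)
The plan is to linearize everything over the field $\mathbb{F}_2=\ZZ/2\ZZ$ and reduce the statement to the classical fact that, for a simply connected planar graph, the cycle space is generated by the boundary cycles of its bounded faces. Identify a primal contour configuration with its set of present edges, i.e.\ with a vector in $\mathbb{F}_2^{E(\LL_1)}$. The defining even-degree condition for contour configurations says precisely that this vector lies in the cycle space of $\LL_1$ over $\mathbb{F}_2$, namely the kernel of the boundary map $\partial\colon \mathbb{F}_2^{E(\LL_1)}\to\mathbb{F}_2^{V(\LL_1)}$ that sends an edge to the sum of its two endpoints. The operation of ``changing the configuration on a square'' is exactly addition mod $2$ of the boundary cycle $\partial S$ of the corresponding square face $S$ of $\LL_1$. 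In this language the lemma asserts that two elements of the cycle space that agree on the edges crossing $\partial B^*_{M,N}$ differ by an $\mathbb{F}_2$-combination of square-face boundaries.

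First I would form the symmetric difference $\psi=\phi\oplus\phi'$, i.e.\ the set of edges on which the two configurations disagree. Because $\phi$ and $\phi'$ agree on every edge crossing $\partial B^*_{M,N}$, the support of $\psi$ is contained in the edge set of $B_{M-1,N-1}$. Moreover, at every vertex $v$ of $B_{M-1,N-1}$ both $\phi$ and $\phi'$ have an even number of incident present edges, and they have identical incident edges crossing $\partial B^*_{M,N}$; subtracting mod $2$ cancels the crossing contributions and shows that $v$ has an even number of incident edges in $\psi$. Hence $\psi$ is an even subgraph of the finite grid graph $B_{M-1,N-1}$, that is, an element of its cycle space over $\mathbb{F}_2$.

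Next I would invoke the structure of the cycle space of $B_{M-1,N-1}$. Since $B_{M-1,N-1}$ is connected and embedded in the plane with no holes (for $M,N\geq 3$ its bounded faces are precisely the unit square faces of $\LL_1$ lying inside the box), its cycle space is spanned by the boundary cycles $\{\partial S\}$ of these bounded square faces. I would justify this by the standard dimension count: the cyclomatic number $|E|-|V|+1$ of a connected planar graph equals the number of bounded faces $|F|-1$ by Euler's formula, and the face boundaries are linearly independent over $\mathbb{F}_2$, because for any nonempty subset $\mathcal{T}$ of bounded faces the sum $\sum_{S\in\mathcal{T}}\partial S$ consists of exactly the edges bordering an odd number of faces of $\mathcal{T}$, which is the nonempty topological boundary of the bounded region $\bigcup_{S\in\mathcal{T}}S$; thus no nontrivial combination vanishes. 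Having $\dim Z$ linearly independent cycles, the $\partial S$ form a basis. Writing $\psi=\sum_{S\in\mathcal{S}}\partial S$ for a suitable set $\mathcal{S}$ of square faces of $B_{M-1,N-1}$, and noting that each $\partial S$ is supported on edges of $B_{M-1,N-1}$ (so none of the boundary-crossing edges is disturbed), applying the corresponding square changes to $\phi$ toggles exactly the edges of $\psi$ and transforms $\phi$ into $\phi'$.

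The only delicate point is the spanning claim in the third step; everything else is formal linear algebra over $\mathbb{F}_2$ together with the bookkeeping of the first step. Its essential content is that $B_{M-1,N-1}$ is simply connected, so that the cycle space coincides with the ``face space'' (equivalently $H_1$ of a disk vanishes): every even subgraph of a grid region bounds, and is the edge boundary of the $\mathbb{F}_2$-region of faces it encloses. I expect this to be the main thing to state carefully, but it is routine once phrased homologically, and the crucial feature that square changes fix all edges crossing $\partial B^*_{M,N}$ is automatic since those edges lie on no bounded face of $B_{M-1,N-1}$.
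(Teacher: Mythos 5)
Your proof is correct. The end result is the same decomposition the paper uses --- the symmetric difference of the two configurations is written as a mod-$2$ sum of unit-square boundaries inside the box --- but you reach it by a genuinely different and more self-contained route. The paper's one-paragraph proof leans on the correspondence (used throughout the paper) between primal contour configurations and site configurations in $\{0,1\}^{V(\LL_2)}$: a square face of $\LL_1$ is a vertex of $\LL_2$, flipping a square is flipping a spin, and two spin configurations agreeing on $\partial B^*_{M,N}$ ``clearly'' differ on finitely many interior vertices. That argument quietly requires choosing consistent representatives from the $2$-to-$1$ correspondence (the global spin flip), and the key step is asserted rather than proved. Your version works directly in the cycle space of the grid graph $B_{M-1,N-1}$ over $\mathbb{F}_2$: you verify that the symmetric difference is an even subgraph supported inside the box, and then prove --- via Euler's formula, the dimension count $|E|-|V|+1=|F|-1$, and the linear independence of face boundaries --- that the bounded-face boundaries span the cycle space. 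Since faces of $\LL_1$ are vertices of $\LL_2$, this is the same underlying planar-duality fact, but your argument supplies the proof the paper omits and avoids the representative-choosing issue. The one point worth stating explicitly is that ``primal contour configuration on $B_{M-1,N-1}$'' carries the even-degree condition at every vertex of the box (including boundary vertices, counting their crossing edges), since that is exactly what makes $\psi$ an even subgraph; you do use this, and it is how the lemma is meant.
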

\begin{proof}Each $\{0,1\}^{V(\LL_2)}$ configuration corresponds to a primal contour configuration. Two neighboring vertices on $V(\LL_2)$ have different states if and only if the edge of $\LL_2$ joining the two vertices crosses a primal contour. Changing a vertex state in $V(\LL_2)$ from 0 to 1, or from 1 to 0, corresponds to changing contour configuration on the corresponding square in $\LL_1$. It is clear that for any two $\{0,1\}^{B_{M,N}^*\cap V(\LL_2)}$ configurations with the same boundary condition on $\partial B_{M,N}^*$, one can be obtained from the other by changing vertex states on finitely many vertices in $[B_{M,N}^*\setminus\partial B_{M,N}^*]\cap V(\LL_2)$.
\end{proof}

\begin{lemma}\label{l61}Let $\mu$ be a probability measure on $\Omega$ satisfying Assumptions $(Ak1)$, $(Ak2)$ and $(A4)$. Then $\mu$-a.s.\ the number of infinite primal contours is 0,1, or $\infty$.
\end{lemma}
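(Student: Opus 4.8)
The plan is to run a finite-energy, Burton--Keane-style merging argument, using \Cref{spm} and \Cref{l44} as the combinatorial engine. First I would pin down the number $N$ of infinite primal contours. Translation by any element of $2k\ZZ\times2k\ZZ$ maps $\LL_1$ onto itself and carries infinite primal contours to infinite primal contours, so the event $\{N=r\}$ is $2k\ZZ\times2k\ZZ$-translation-invariant for each $r\in\{0,1,2,\dots,\infty\}$; since $(Ak1)$ supplies the translation-invariance underlying ergodicity, $(Ak2)$ forces $N$ to be $\mu$-a.s.\ equal to a single constant. Suppose, for contradiction, that this constant $r$ satisfies $2\le r<\infty$. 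The goal is then to exhibit, with positive probability, a configuration having exactly one infinite primal contour, contradicting $N=r$ a.s.

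Next I would localize all $r$ infinite contours in a finite box. Let $E_M\subseteq\{N=r\}$ be the event that every infinite primal contour meets $B_{M,M}^*$; since an infinite contour is unbounded, on $E_M$ each of the $r$ infinite contours must in fact cross $\partial B_{M,M}^*$. The events $E_M$ increase in $M$ and their union is $\{N=r\}$, so $\mu(E_M)\to 1$ and we may fix some $M\ge3$ with $\mu(E_M)>0$. The parity hypothesis of \Cref{spm} is automatic here: restricted to $\LL_1$ the present edges form an even subgraph, so summing the numbers of incident present edges over the vertices of $B_{M-1,M-1}$ gives an even total equal to $2e_{\mathrm{int}}+e_{\mathrm{cross}}$, where $e_{\mathrm{cross}}$ counts the present edges of $\LL_1$ crossing $\partial B_{M,M}^*$ and $e_{\mathrm{int}}$ those with both endpoints inside; hence $e_{\mathrm{cross}}$ is even.

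Now I would merge and invoke finite energy. On $E_M$, apply \Cref{spm} inside $B_{M-1,M-1}$ to produce a contour configuration in which all present edges of $\LL_1$ crossing $\partial B_{M,M}^*$ lie in a single contour, leaving the configuration outside $B_{M,M}^*$ unchanged. Every infinite contour had an arm crossing the boundary; these arms are untouched outside the box but are now all joined through $B_{M-1,M-1}$, and no infinite contour avoids the box on $E_M$, so (using the reasoning of \Cref{cc} and the unboundedness of the contours) the modified configuration has exactly one infinite primal contour. By \Cref{l44}, this interior modification is realized by flipping a fixed finite set of square faces of $\LL_1$. To pass to measures, partition $E_M$ according to the states of the finitely many edges of $\LL_1$ that lie in, or cross, $\partial B_{M,M}^*$; at least one class $E_M'$ has positive $\nu_1$-measure, and on $E_M'$ both the original interior configuration and the crossing pattern, and hence the set of squares to be flipped, are constant. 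Applying $(A4)$ once for each such square in turn shows that the image of $E_M'$ under this fixed composition of square-flips has positive $\nu_1$-measure. That image lies in the event that there is exactly one infinite primal contour, contradicting $N=r\ge2$ a.s.; therefore $N\in\{0,1,\infty\}$.

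The main obstacle is precisely this last step: the merging prescription of \Cref{spm} depends on the configuration, so it is not a single finite-energy flip. The device for overcoming it is to condition on the finitely many possibilities for the configuration near the box, which renders the flip set deterministic on each class and lets the finite-energy condition $(A4)$ be iterated a bounded number of times (the flips commute, being coordinatewise involutions). The remaining care is the bookkeeping that the merged configuration genuinely has a single infinite contour---no infinite contour escapes the box on $E_M$, and a change confined to a finite box creates no new infinite contour---which is where \Cref{cc} is used.
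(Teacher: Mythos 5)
Your proposal is correct and follows essentially the same route as the paper's proof: ergodicity via (Ak2) pins the number of infinite primal contours to a constant, the event that a box $B^*_{M,M}$ meets all of them is given positive probability, and \Cref{spm} together with \Cref{l44} and the finite-energy assumption (A4) merges them into a single infinite contour, contradicting $N=r\ge 2$. Your extra bookkeeping (the handshake argument for the even-crossing hypothesis, and partitioning $E_M$ into finitely many boundary classes so that the set of flipped squares is deterministic before iterating (A4)) only fills in details the paper leaves implicit.
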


\begin{proof}The proof is inspired by \cite{ns81,ZL12m}.

The boundary $\partial B_{M,N}^*$ consists of four line segments. We claim that primal contours cross $\partial B_{M,N}^*$ an even number of times. To see why this is true, note that any configuration of primal contours induces a site configuration $\rho$ in $\{0,1\}^{V(\LL_2)}$, such that for each pair of adjacent vertices $u,v\in V(\LL_2)$, $u$ and $v$ have different states in $\rho$ if and only if the edge $\langle u,v\rangle$ of $\LL_2$ crosses a primal contour. Winding around $\partial B_{M,N}^*$, the states of vertices change an even number of times, and therefore $\partial B_{M,N}^*$ crosses primal contours an even number of times.

Let $t$ be an integer satisfying $1\leq t<\infty$, and let $E_t$ be the event that there exist exactly $t$ infinite primal contours. By (Ak2), either $\mu(E_t)=0$ or $\mu(E_t)=1$.

Assume there exists $t\geq 2$, such that $\mu(E_t)=1$.  Let $F_n$ be the event that $B_{n,n}^*$ intersects all the $t$ infinite primal contours, then
\begin{eqnarray*}
\lim_{n\rightarrow\infty}\mu(F_n)=\lim_{n\rightarrow\infty}\mu(\cup_n F_n)=1.
\end{eqnarray*}
Hence there exists $N\geq 2$, such that
\begin{eqnarray}
\mu(F_N)>\frac{1}{2}>0.\label{fp}
\end{eqnarray}

Assume that $F_{N}$ occurs. Then by \Cref{spm,l44}, after changing configurations on finitely many squares of $B_{N-1,N-1}$ in such a way as described in (A4), we obtain a configuration with exactly one infinite contour. By (\ref{fp}) and (A4), with strictly positive probability, there is exactly one infinite contour. But this is a contradiction to the assumption that $\mu(E_t)=1$, for some $\infty>t\geq 2$. Therefore we conclude that $\mu$-a.s.\ the number of infinite primal contours is 0,1, or $\infty$.
 \end{proof}

\medskip
\noindent\textbf{Proof of Theorem \ref{m4}.}  By \Cref{l61}, it suffices to prove that $\mu$-a.s.\ the number of infinite primal contours is finite.

By \Cref{att},   $\mu$-a.s.\ no contour has more than two ends.

Let $E_{\infty}$ be the event that there exist infinitely many infinite primal contours. By (Ak2), either $\mu(E_{\infty})=0$ or $\mu(E_{\infty})=1$.

Assume that $\mu(E_{\infty})=1$. Then when $N$ is sufficiently large, the $N\times N$ box $B_{N}^*$ of $\LL_2$ centered at the origin crosses at least three infinite contours. By \Cref{spm,l44} and (A5), with positive probability, there exists an infinite contour with at least three ends. The contradiction implies $\mu(E_{\infty})=0$. $\hfill\Box$

\section{Nonexistence of Infinite Clusters under the Finite Energy Assumption}\label{pf23}

In this section, we prove \Cref{m2}. Let $\mu$ be a
probability measure on $\Omega$ satisfying Assumptions
(A1)--(A4).

\begin{proof}By \Cref{m4}, $\mu$-a.s.\ there exists at most one infinite primal contour and at most one infinite dual contour. Recall that $\mathcal{C}_i$ ($i=0,1,2$) consists of all infinite (primal and dual) contours satisfying the condition that the complement of the infinite contour in $\RR^2$ has exactly $i$ unbounded components. By (A1)--(A2), almost surely one of the following cases is true:
\begin{enumerate}
\item $\mathcal{C}_2=\emptyset$ and $\mathcal{C}_1=\emptyset$;
\item $\mathcal{C}_2=\emptyset$ and $\mathcal{C}_1$ consists of exactly one infinite primal contour and one infinite dual contour;
\item $\mathcal{C}_1=\emptyset$ and $\mathcal{C}_2$ consists of exactly one infinite primal contour and one infinite dual contour.
\end{enumerate}

Let $\mathcal{B}$ be the event that there exist infinite clusters. By (A2), either $\mu(\mathcal{B})=0$ or $\mu(\mathcal{B})=1$. Assuming that $\mu(B)=1$, we will obtain a contradiction.

By \Cref{m1}, if $\mu(\mathcal{B})=1$, then $\mu$-a.s.\ there are infinitely
many infinite clusters. By (A3), $\mu$-a.s.\ there are both infinite
0-clusters and infinite-1 clusters. By \Cref{io}, $\mu$-a.s.\ there exist
infinite contours.

By \Cref{att,ec2}, $\mu$-a.s.\ all the infinite contours are in $\mathcal{C}_0\cup\mathcal{C}_1\cup\mathcal{C}_2$. Given that $\mu(\mathcal{B})=1$, $\mathcal{C}_0=\emptyset$. Hence all the infinite contours are in $\mathcal{C}_1\cup\mathcal{C}_2$. Therefore Case I is impossible.

If Case II occurs, by \Cref{uic,bf}, $\mu$-a.s.\ there exists a unique infinite cluster. But this is a contradiction to the fact that there exists infinitely many infinite clusters $\mu$-a.s.

If Case III occurs, by \Cref{eoi}, $\mu$-a.s.\ there exist exactly three infinite clusters. Again this is a contradiction to the fact that there exists infinitely many infinite clusters $\mu$-a.s.

As a result, $\mu(\mathcal{B})$=0. The fact that there are no infinite contours $\mu$-a.s.\ follows from \Cref{ncl}.
\end{proof}

\section{Non-uniqueness of Infinite 1-Clusters}\label{pf1w}

In this section, we prove a few combinatorial and probabilistic results in preparation to prove the remaining main theorems of the paper.

\begin{lemma}\label{icazo}Let $\omega\in\Omega$. Assume that there is exactly one infinite 0-cluster and exactly one infinite 1-cluster in $\omega$. Let $\phi$ be the corresponding contour configuration. Then for each infinite contour $C$, the complement $G\setminus C$ has at most two infinite components.

\end{lemma}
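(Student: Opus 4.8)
The plan is to count the infinite components of $G\setminus C$ by showing that each one must contain a distinct infinite cluster, and then invoking the hypothesis that there are only two infinite clusters in total.

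First I would apply \Cref{icic}, which guarantees that every infinite component of $G\setminus C$ contains an infinite cluster (in fact one incident to $C$). Thus each infinite component accounts for at least one infinite cluster. The complementary structural point is that each infinite cluster lies entirely inside a single component of $G\setminus C$. Indeed, any edge of $G$ joining two vertices of the same state is crossed by no present edge of $\phi$ at all, since present edges separate the state $0$ from the state $1$; in particular no such edge is crossed by $C$, so all edges internal to a cluster survive in $G\setminus C$. Hence a cluster, being connected in $G$, remains connected in $G\setminus C$, and so an infinite cluster cannot be split between two distinct components.

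Combining these two observations, the infinite clusters are distributed among the infinite components of $G\setminus C$ in such a way that each infinite component receives at least one infinite cluster and each infinite cluster is assigned to exactly one component. Therefore the number of infinite components of $G\setminus C$ is at most the total number of infinite clusters. By hypothesis there is exactly one infinite $0$-cluster and exactly one infinite $1$-cluster, so there are precisely two infinite clusters, and consequently $G\setminus C$ has at most two infinite components.

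I do not expect a serious obstacle here: the entire argument reduces to \Cref{icic} together with the elementary fact that contour edges cross only edges of $G$ whose two endpoints have different states. The only point deserving explicit care is the claim that an infinite cluster cannot be partitioned across two components of $G\setminus C$, which is exactly this elementary fact and should be stated cleanly to make the counting rigorous.
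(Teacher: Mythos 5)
Your proof is correct and follows essentially the same route as the paper, which likewise deduces from \Cref{icic} that three or more infinite components of $G\setminus C$ would force three or more infinite clusters, contradicting the hypothesis. The only difference is that you make explicit the (correct and worthwhile) observation that a cluster cannot be split across components of $G\setminus C$, since edges joining same-state vertices are never crossed by contour edges; the paper leaves this implicit.
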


\begin{proof}Let $C$ be an infinite contour in $\phi$ whose complement in $G$ has at least three infinite components. By \Cref{icic}, there exist at least three infinite clusters, contradicting the assumption.
\end{proof}

\begin{lemma}\label{cac}Let $\omega\in \Omega$. If there is exactly one infinite 0-cluster and exactly one infinite 1-cluster in $\omega$, then there exists an infinite contour that is incident to both the infinite 0-cluster and the infinite 1-cluster in $\omega$.
\end{lemma}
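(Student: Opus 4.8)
The plan is to exhibit a single infinite contour incident to both infinite clusters by examining a path between them that crosses infinite contours as few times as possible. Write $\xi_0$ and $\xi_1$ for the unique infinite $0$- and $1$-clusters. Among all paths in $G$ running from a vertex of $\xi_0$ to a vertex of $\xi_1$, I would fix one, $\ell_{xy}$ with $x\in\xi_0$ and $y\in\xi_1$, that crosses infinite contours the minimal possible number $m$ of times. By \Cref{io} this number is odd, so $m\geq 1$; in particular at least one infinite contour is crossed. Listing the crossings in order along $\ell_{xy}$, say the $j$-th occurs on a $G$-edge $e_j$ with endpoints $v_{k_j},v_{k_j+1}$ and meets the infinite contour $C_{i_j}$, for $j=1,\dots,m$.

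First I would read off what the earlier lemmas say along this path. Applying \Cref{isc} with the infinite cluster $\xi_0$ and contour $C_{i_1}$ shows that the vertex $v_{k_1}$ just before the first crossing lies in $\xi_0$; symmetrically, \Cref{isc} with $\xi_1$ and $C_{i_m}$ shows that the vertex $v_{k_m+1}$ just after the last crossing lies in $\xi_1$. For $1\le j\le m-1$, the portion of $\ell_{xy}$ strictly between the $j$-th and $(j+1)$-st crossings meets no infinite contour, so \Cref{cscc} gives that $v_{k_j+1}$ and $v_{k_{j+1}}$ lie in a common cluster, which I denote $D_j$. Setting $D_0=\xi_0$ and $D_m=\xi_1$, each contour $C_{i_j}$ is then incident to both $D_{j-1}$ and $D_j$, since the two endpoints of $e_j$ lie one in each.

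The key step is to show that minimality of $m$ forces every intermediate cluster $D_1,\dots,D_{m-1}$ to be finite. Suppose some $D_j$ is infinite; since $\xi_0,\xi_1$ are the only infinite clusters, $D_j\in\{\xi_0,\xi_1\}$. If $D_j=\xi_0$, then $v_{k_{j+1}}\in\xi_0$, and I would splice a path lying entirely inside $\xi_0$ from $x$ to $v_{k_{j+1}}$ (which crosses no contour at all, as any edge inside a single cluster joins two equal-state vertices) with the tail of $\ell_{xy}$ from $v_{k_{j+1}}$ to $y$; the resulting path crosses infinite contours only $m-j<m$ times, contradicting minimality. If instead $D_j=\xi_1$, the symmetric splice (the head of $\ell_{xy}$ up to $v_{k_j+1}\in\xi_1$, then a path inside $\xi_1$ to $y$) crosses infinite contours only $j<m$ times, again a contradiction. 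Hence all the $D_j$ are finite.

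Finally I would combine these facts. For each $1\le j\le m-1$ the finite cluster $D_j$ is incident to the two infinite contours $C_{i_j}$ and $C_{i_{j+1}}$; were these distinct, \Cref{cc}(III) would force $D_j$ to be infinite. Thus $C_{i_j}=C_{i_{j+1}}$ for every $j$, so all crossings lie on a single infinite contour $C:=C_{i_1}=\dots=C_{i_m}$. This $C$ is incident to $\xi_0$ (through $v_{k_1}$) and to $\xi_1$ (through $v_{k_m+1}$), which is exactly the assertion; the case $m=1$ gives the same conclusion directly, with no intermediate clusters to analyze. The step I expect to require the most care is the rerouting argument: one must confirm that each spliced object is a genuine $G$-path (or a walk reducible to one without increasing crossings) from $\xi_0$ to $\xi_1$, and that prepending or appending an intra-cluster path strictly lowers the number of infinite-contour crossings, so that the minimality of $m$ is truly violated.
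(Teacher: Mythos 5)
Your argument is correct, but it takes a genuinely different route from the paper's. The paper proves \Cref{cac} by invoking the classification of infinite contours according to the number of unbounded components of their complements: it uses \Cref{io} to get an infinite contour, \Cref{icazo} to place every infinite contour in $\mathcal{C}_0\cup\mathcal{C}_1\cup\mathcal{C}_2$, \Cref{eoi} to deduce from the cluster count that $|\mathcal{C}_2|=1$, and finally \Cref{icic} to see that the unique contour of $\mathcal{C}_2$ touches an infinite cluster in each of its two unbounded complementary components, which must be $\xi_0$ and $\xi_1$. You instead run a minimal-crossing-path argument: choose a path from $\xi_0$ to $\xi_1$ with the fewest infinite-contour crossings ($m\geq 1$ odd by \Cref{io}), use \Cref{isc} and \Cref{cscc} to identify the clusters sitting between consecutive crossings, use minimality (via a splice through $\xi_0$ or $\xi_1$, loop-erased if needed) to force those intermediate clusters to be finite, and then use \Cref{cc}~III to conclude that consecutive crossings must hit the same infinite contour. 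All the applications check out: the endpoints of each crossing edge are at distance $\tfrac12$ from the contour edge crossing it, so the incidence claims are as intended; the intra-cluster splices cross no contours since every edge of $G$ crosses a present contour edge exactly when its endpoints have different states; and \Cref{cc}~III is indeed stated and proved for two \emph{distinct} infinite contours, which is exactly how you use it. Your approach has the advantage of being self-contained and purely local along one path, avoiding the global topological bookkeeping of Section 3 (ends, $\mathcal{C}_i$, counts of unbounded components); the paper's approach buys more, since the same machinery simultaneously yields the stronger counting statements (\Cref{eoi}, \Cref{cac1}) used elsewhere. The one step that deserves the care you flag is the splicing: the concatenation is a walk, and one should note that loop-erasure only removes edge-traversals and hence cannot increase the number of infinite-contour crossings, so minimality is genuinely violated.
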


\begin{proof}By \Cref{io}, there exist infinite contours.  By \Cref{icazo}, all infinite contours lie in $\mathcal{C}_0\cup\mathcal{C}_1\cup\mathcal{C}_2$. But since there are infinite contours, $\mathcal{C}_0=\emptyset$.

By \Cref{eoi}, since the total number of infinite clusters is two, $\sN=|\mathcal{C}_2|=1$.

Let $C$ be the infinite contour in $\mathcal{C}_2$. By \Cref{icic}, each one of the two unbounded components of $G\setminus C$ contains one infinite cluster incident to $C$. Hence $C$ is adjacent to both the infinite 0-cluster and the infinite 1-cluster.
\end{proof}

\begin{samepage}
\begin{lemma}\label{pnp}Let $\xi_0$, $\xi_1$ be two distinct infinite clusters. Let $C_1$, $C_2$ be two distinct infinite contours. Then it is not possible that the following two conditions happen simultaneously.
\begin{enumerate}[label=(\alph*)]
\item The infinite contour $C_1$ is incident to both $\xi_0$ and $\xi_1$.
\item The infinite contour $C_2$ is incident to both $\xi_0$ and $\xi_1$.
\end{enumerate}
\end{lemma}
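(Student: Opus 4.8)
The plan is to argue by contradiction: assuming that (a) and (b) both hold, I will produce a single interface component of $C_1$ that separates $\xi_0$ from $\xi_1$ in the plane, and then observe that $C_2$, being connected and forbidden (by \Cref{cin}) from crossing interfaces, cannot reach both sides. First I would translate the incidences into the interface language of \Cref{rl}. Since $C_1$ is incident to $\xi_0$, there are a present edge $a\in C_1$ and a vertex $u\in\xi_0$ at Euclidean distance $\tfrac12$ from $a$; the midpoint of the segment $[u,a]$ lies on the interface of $C_1$, hence on some component $\ell_0$, and $u\in F_{\ell_0}$. By \Cref{rl} the whole of $F_{\ell_0}$ lies in a single cluster, namely $\xi_0$. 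Because $\xi_0$ is infinite, $\ell_0$ cannot be a finite self-avoiding cycle: a finite cycle of $\partial\widetilde{C_1}$ would bound a bounded region containing $F_{\ell_0}$ (the side away from the unbounded ribbon $\widetilde{C_1}$), forcing $\xi_0$ to be finite. Hence $\ell_0$ is a doubly-infinite self-avoiding path and separates $\RR^2$ into exactly two unbounded pieces $A_0\ni F_{\ell_0}$ and $B_0$, with the ribbon $\widetilde{C_1}$, and hence $C_1$, on the $B_0$ side. Running the same construction on the incidence of $C_1$ with $\xi_1$ yields a second interface component $\ell_1$ of $C_1$ with $F_{\ell_1}\subseteq\xi_1$; since $F_{\ell_0}\subseteq\xi_0\ne\xi_1\supseteq F_{\ell_1}$ and each $F_\ell$ lies in one cluster, $\ell_1\ne\ell_0$.

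Next I would check that $\ell_0$ in fact separates $\xi_0$ from $\xi_1$. A cluster never crosses an interface (a $G$-edge crossing $\partial\widetilde{C_1}$ must cross a present edge of $C_1$, which is impossible inside a single-state cluster; this is the cluster analogue of \Cref{cin}). Thus $\xi_0$ is connected, disjoint from $\ell_0$, and meets $A_0$, so $\xi_0\subseteq A_0$. On the other hand $\ell_1\subseteq\partial\widetilde{C_1}\subseteq\overline{B_0}$ and $\ell_1\cap\ell_0=\emptyset$, so $\ell_1\subseteq B_0$; as $F_{\ell_1}$ sits at distance $\tfrac14$ from $\ell_1$ on the side away from the ribbon and at distance $>\tfrac14$ from $\ell_0$ (being in $\xi_1$, not in $F_{\ell_0}$), we get $F_{\ell_1}\subseteq B_0$, and hence $\xi_1\subseteq B_0$ by the same connectedness argument. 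Therefore $\xi_0$ and $\xi_1$ lie strictly on opposite sides of $\ell_0$.

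Finally I would extract the contradiction from the connectedness of $C_2$. By \Cref{cin} the contour $C_2$ does not meet the interface $\ell_0$ of $C_1$, so, being connected, $C_2$ lies entirely in $A_0$ or entirely in $B_0$. But $C_2$ is incident to $\xi_0\subseteq A_0$, and the radius-$\tfrac12$ bridge realizing this incidence lies in a single black face carrying no edge of $C_1$ (at most one present edge crosses each black face), hence does not meet $\ell_0$; this exhibits a point of $C_2$ in $A_0$. Symmetrically, the incidence of $C_2$ with $\xi_1\subseteq B_0$ exhibits a point of $C_2$ in $B_0$. A connected set disjoint from $\ell_0$ cannot meet both $A_0$ and $B_0$, so this is impossible, which is the desired contradiction.

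The main obstacle is the middle step: verifying that $\ell_0$ genuinely separates the two clusters — that is, that $\xi_1$ stays on the $C_1$-side $B_0$ rather than returning to $A_0$ — together with the claim that each incidence of $C_2$ deposits a point of $C_2$ strictly on the corresponding side. These are planar Jordan-curve statements resting on local facts about the $\tfrac14$-offsets relating a contour, its interface, and the paths $F_\ell$, and must be confirmed by the same kind of local case analysis around black faces used in \Cref{rl,cin}. It is essential here to separate using the interface component $\ell_0$ rather than $C_1$ itself: an infinite contour whose complement has a single unbounded component need not separate the two distinct infinite clusters it is incident to, so a naive separation argument using $C_1$ directly would fail.
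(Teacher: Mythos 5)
Your argument is correct, but it follows a genuinely different route from the paper's. The paper also argues by contradiction, but instead of exhibiting a global separating line it builds a closed curve: a path through $\xi_1$ joining a point $x\in C_1$ to a point $y\in C_2$, a path through $\xi_0$ joining $z\in C_1$ to $w\in C_2$, and arcs $\ell_{zx}\subseteq C_1$, $\ell_{wy}\subseteq C_2$ closing up a bounded region $R$. The interface component $\gamma$ of $C_1$ through the midpoint $x_2$ of the first step of the $\xi_1$-path must cross $\partial R$ an even number of times; since $\gamma$ can cross neither the contour arcs (by \Cref{cin}) nor an edge internal to a cluster, the only other available crossing is the corresponding midpoint $z_2$ on the $\xi_0$-path, so $z_2\in\gamma$, and \Cref{rl} then puts $x_1\in\xi_1$ and $z_1\in\xi_0$ in the same cluster --- the contradiction. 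So the paper's contradiction is ``the two clusters coincide,'' whereas yours is ``the connected set $C_2$, forbidden from crossing $\ell_0$, meets both sides of it.'' Your route yields a stronger intermediate statement (a single doubly-infinite interface component of $C_1$ separates $\xi_0$ from $\xi_1$) and correctly isolates the trap that $C_1$ itself need not separate them; the cost is the extra local bookkeeping you flag --- that $\ell_1$, $F_{\ell_1}$ and hence $\xi_1$ land on the $C_1$-side of $\ell_0$, and that each incidence bridge of $C_2$ stays inside a single black face disjoint from $\widetilde{C_1}$ --- all of which do check out and are of exactly the same nature as the quarter-offset computations behind \Cref{cin,rl}. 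The paper's parity count trades those verifications for the (likewise informally asserted) existence of the two bridge paths between $C_1$ and $C_2$ inside the respective clusters, so the two proofs are of comparable rigor and length.
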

\end{samepage}
\begin{proof}We will prove the lemma by contradiction.

Assume that both (a) and (b) occur. We can find points $x\in C_1$ and $y\in C_2$, such that $x$ and $y$ are connected by a path $\ell_{xy}$, consisting of edges of $G$ and two half-edges, (one starting at $x$ and one ending at $y$), such that every vertex of $\ZZ^2$ along $\ell_{xy}$ is in $\xi_1$. Similarly, we can find a point $z\in C_1$ and $w\in C_2$, such that $z$ and $w$ are connected by a path $\ell_{zw}$, consisting of edges of $G$ and two half-edges, (one starting at $z$ and one ends at $w$), such that  every vertex of $\ZZ^2$ along $\ell_{zw}$ is in $\xi_0$. Moreover, we can find a path $\ell_{zx}\subseteq C_1$ connecting $z$ and $x$ and $\ell_{wy}\subseteq C_2$ connecting $w$ and $y$. Viewed as subsets of $\RR^2$, the four paths $\ell_{xy}$, $\ell_{wy}$, $\ell_{zw}$ and $\ell_{zx}$ are disjoint except for the endpoints. Therefore their union is a simple closed curve in $\RR^2$. Let $R\subseteq \RR^2$ be the bounded region enclosed by the curve; see Figure \ref{cci}.

\begin{figure}
 \centering
  \includegraphics[width=0.55\textwidth]{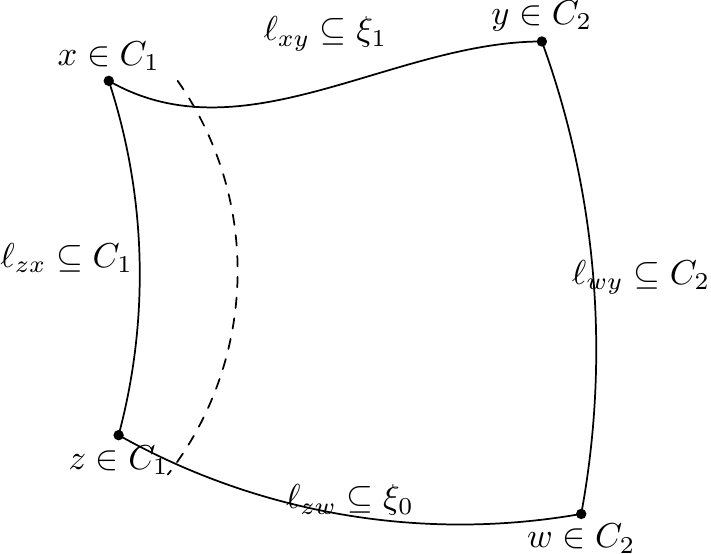}
 \caption{Infinite clusters and incident contours}\label{cci}
 \end{figure}

Let $x_1$ be the first vertex of $G$ along $\ell_{xy}$ starting from $x$; and
let $z_1$ be the first vertex in $\ZZ^2$ along $\ell_{zw}$ starting from $z$.
Let $x_2$ (resp.\ $z_2$) be the midpoint of the line segment $[x,x_1]$
(resp.\ $[z,z_1]$). Since $x\in C_1$ and $x_1\in \xi_1$, the interface of
$C_1$ contains $x_2$. Similarly the interface of $C_1$ contains $z_2$ as
well.

We claim that $x_2$ and $z_2$ are in the same component of the interface of $C_1$. To see why this is true, consider the connected component $\gamma$ of the interface of $C_1$ containing $x_2$; $\gamma$ is either a self-avoiding cycle or a doubly-infinite self-avoiding path. Therefore $\gamma$ crosses $\partial R=\ell_{xy}\cup \ell_{zw}\cup\ell_{zx}\cup \ell_{wy}$ an even number of times. But the only other possible crossing of $\gamma$ with $\partial R$ is $z_2$, therefore $z_2\in \gamma$. Indeed, $\gamma$ cannot cross $C_1$ or $C_2$ because interfaces and contours cannot cross; moreover, $\gamma$ cannot cross $\ell_{xy}$ at a point other than $x_2$ because if that occurs, an edge of $\ell_{xy}$ joins two vertices in different clusters, which is impossible; similarly, $\gamma$ cannot cross $\ell_{zw}$ at a point other than $z_2$.
By similar reasoning any other component of the interface of $C_1$ does not cross $\partial R$.

Recall that $F_{\gamma}$ is the set of vertices whose $\ell^{\infty}$ distance to $\gamma$ is $\frac{1}{4}$. By \Cref{rl}, all those vertices lie in the same cluster. Then $x_1$ and $z_1$ are in the same cluster of the constrained site configuration on $\ZZ^2$. However $x_1\in \xi_1$, $z_1\in \xi_0$, and $\xi_1$ and $\xi_0$ are distinct clusters. This is a contradiction, since they lie in distinct clusters.
\end{proof}

\begin{lemma}\label{cac1}Let $\omega\in \Omega$. If there is exactly one infinite 0-cluster $\xi_0$ and exactly one infinite 1-cluster $\xi_1$ in $\omega$, then the total number of infinite contours incident to  both $\xi_0$ and $\xi_1$ is exactly one.
\end{lemma}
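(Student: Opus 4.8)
The plan is to obtain this lemma as an immediate synthesis of the two results just established, \Cref{cac} and \Cref{pnp}, which together pin the count from below and from above. First I would invoke \Cref{cac}: under the standing hypothesis that there is exactly one infinite $0$-cluster $\xi_0$ and exactly one infinite $1$-cluster $\xi_1$, it guarantees the existence of at least one infinite contour incident to both $\xi_0$ and $\xi_1$. So the number in question is at least one.

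For the upper bound I would argue by contradiction. Suppose there were two distinct infinite contours $C_1$ and $C_2$, each incident to both $\xi_0$ and $\xi_1$. Before applying \Cref{pnp} I would check that its hypotheses are met: $\xi_0$ and $\xi_1$ are indeed two distinct infinite clusters, since one consists entirely of $0$'s and the other entirely of $1$'s, and $C_1,C_2$ are by assumption two distinct infinite contours. Then conditions (a) and (b) of \Cref{pnp} hold simultaneously for the pair $(C_1,C_2)$ and the pair $(\xi_0,\xi_1)$, which is exactly what \Cref{pnp} forbids. This contradiction shows that at most one infinite contour can be incident to both $\xi_0$ and $\xi_1$.

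Combining the two bounds, the total number of infinite contours incident to both $\xi_0$ and $\xi_1$ is exactly one, as claimed. The only bookkeeping worth stating explicitly is the verification that $\xi_0 \ne \xi_1$ so that \Cref{pnp} applies verbatim; everything else is a direct citation.

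I do not expect a genuine obstacle in this step. The substantive topological work—tracking interfaces across the simple closed curve bounding the region $R$ in \Cref{pnp}, and locating the infinite contour in $\mathcal{C}_2$ incident to both unbounded components in \Cref{cac} via \Cref{icic}—has already been carried out in those lemmas, so \Cref{cac1} is a short corollary rather than a fresh argument.
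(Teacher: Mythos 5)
Your proposal is correct and matches the paper's own proof exactly: the lower bound comes from \Cref{cac} and the upper bound from \Cref{pnp}, combined to give exactly one. The extra remark that $\xi_0\neq\xi_1$ (so \Cref{pnp} applies) is a harmless bit of bookkeeping the paper leaves implicit.
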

\begin{proof}By \Cref{cac}, there exists at least one infinite contour that is incident to both the infinite 0-cluster and the infinite 1-cluster in $\omega$. By \Cref{pnp}, the number of infinite contours incident to both the infinite 0-cluster and the infinite 1-cluster is at most one. Hence there exists exactly one infinite contour that is incident to both $\xi_0$ and $\xi_1$.
\end{proof}

\section{Unconstrained Percolation}\label{ucpc}

In this section, we introduce certain combinatorial results on unconstrained percolation in $\{0,1\}^{V(\LL_2)}$.

Let $\rho\in \{0,1\}^{V(\LL_2)}$. Let $\psi_1(\rho)\in \Phi_{1}$ be the corresponding contour configuration in which an edge in $\LL_1$ is present if and only if the two vertices whose Euclidean distances to the edge is $1$ have different states. It is trivial to check that each vertex in $V(\LL_1)$ has an even number of incident present edges in $\psi_1(\rho)$. The infinite clusters in $\rho$ and infinite contours in $\Phi_1$ are defined in the usual way. We say a contour in $\psi_1(\rho)$ is \df{incident} to a cluster in $\rho$, if their Euclidean distance is 1.

Let $\psi\in\Phi$. Recall that $\phi^{-1}(\psi)=\{\omega, \theta(\omega)\}\subset \Omega$, where $\theta(\omega)=1-\omega$. Now suppose that $\psi\in\Phi_1$, i.e., $\psi$ has no dual contours. Then in $\omega$, the four sites in $\{2m,2m+1\}\times\{2n+1,2n+2\}$ all have the same state for all $m,n\in\ZZ$. We can associate a dual site configuration $\gamma_2=\gamma_2(\psi)\in\{0,1\}^{V(\LL_2)}$ by setting $\gamma_2(2m+\frac{1}{2},2n+\frac{3}{2})=\omega(2m,2n)$  for all $m,n\in \ZZ$ with probability $\frac{1}{2}$; and setting  $\gamma_2(2m+\frac{1}{2},2n+\frac{3}{2})=1-\omega(2m,2n)$ for all $m,n\in \ZZ$ with probability $\frac{1}{2}$. Each cluster of $\gamma_2$ corresponds via an obvious bijection with a cluster of $\omega$. However, $\gamma_2$ is an \emph{unconstrained configuration}, it may take any value in $\{0,1\}^{V(\LL_2)}$ for suitable $\psi\in\psi_1$. We will use some of our results about constrained percolation to reason about unconstrained configurations.

\begin{proposition}{\label{up}}Let $\rho\in \{0,1\}^{V(\LL_2)}$. If in $\rho$, there is exactly one infinite 0-cluster and one infinite 1-cluster, then in $\psi_1(\rho)$, there is exactly one infinite contour incident to both the infinite 0-cluster and the infinite-1 cluster.
\end{proposition}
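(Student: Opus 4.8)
The plan is to reduce the statement to the constrained setting and then invoke \Cref{cac1}. Given $\rho\in\{0,1\}^{V(\LL_2)}$ with exactly one infinite $0$-cluster and one infinite $1$-cluster, set $\psi=\psi_1(\rho)\in\Phi_1$ and choose $\omega\in\phi^{-1}(\psi)\subset\Omega$. Since $\psi$ has no dual contours, the sites of $\omega$ are constant on each block $\{2m,2m+1\}\times\{2n+1,2n+2\}$, and the auxiliary dual configuration $\gamma_2=\gamma_2(\psi)$ records these block values on $V(\LL_2)$, its clusters being in bijection with the clusters of $\omega$. The goal is to show that $\omega$ has exactly one infinite $0$-cluster and one infinite $1$-cluster, apply \Cref{cac1} to produce a unique infinite contour of $\omega$ incident to both, and then carry this contour and its incidences back across the block bijection to $\rho$ and $\psi_1(\rho)$.

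The key step, which I expect to be the main obstacle, is to identify $\gamma_2$ with $\rho$ up to a global flip, thereby guaranteeing that $\omega$ carries one infinite cluster of \emph{each} state rather than, say, two infinite clusters of the same state. Both $\rho$ and $\gamma_2$ have the same contour configuration $\psi$ on $\LL_1$: the present edges of $\psi$ are the domain walls of $\rho$ by definition, and they are also the domain walls of $\gamma_2$ because two adjacent block values of $\omega$ differ precisely when a primal contour separates them. Hence $\rho$ and $\gamma_2$ induce the same partition of $V(\LL_2)$ into clusters. Any two $\LL_2$-adjacent vertices lying in distinct clusters must have different states, so the assignment of states to clusters is a proper two-colouring of the cluster-adjacency graph; this graph is connected (contracting each cluster in the connected graph $\LL_2$ yields a connected graph), so the colouring is determined up to a single global flip. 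Therefore $\gamma_2\in\{\rho,\,1-\rho\}$, and in either case $\gamma_2$—and hence, through the stated cluster bijection, $\omega$—has exactly one infinite $0$-cluster and one infinite $1$-cluster.

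With this in hand, \Cref{cac1} applied to $\omega$ yields a unique infinite contour $C$ incident to both infinite clusters of $\omega$. Because $\psi=\phi(\omega)\in\Phi_1$ contains only primal contours, $C$ is a primal contour of $\psi_1(\rho)$. It then remains to match the two incidence relations: the constrained one (a vertex of $\ZZ^2$ at Euclidean distance $\tfrac12$ from an edge of $C$) against the unconstrained one (an $\LL_2$-vertex at Euclidean distance $1$ from an edge of $C$). Under the block bijection $V(\LL_2)\leftrightarrow\{2m,2m+1\}\times\{2n+1,2n+2\}$ this is a routine local check, since each block is monochromatic in $\omega$ and sits around the corresponding white face centre; once verified, the unique $C$ incident to both infinite clusters of $\omega$ transfers to a unique infinite contour of $\psi_1(\rho)$ incident to both the infinite $0$-cluster and the infinite $1$-cluster of $\rho$, completing the argument.
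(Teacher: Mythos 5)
Your proposal is correct and follows essentially the same route as the paper: pass to a constrained configuration $\omega\in\phi^{-1}(\psi_1(\rho))$, identify $\gamma_2$ with $\rho$ up to a global flip so that $\omega$ inherits exactly one infinite cluster of each state, and then invoke \Cref{cac1}. Your two-colouring argument for why $\gamma_2\in\{\rho,1-\rho\}$ fills in a step the paper only asserts ("by the discussions above"), but the overall structure is identical.
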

\begin{proof}Given the contour configuration $\psi_1(\rho)$ on $\LL_1$,  $\phi^{-1}(\psi_1(\rho))=\{\omega,\theta(\omega)\}$ consists of two constrained configuration in $\Omega$ whose corresponding contour configuration under the mapping $\phi$ is exactly $\psi_1(\rho)$.

By the discussions above, we see that either $\rho=\gamma_2(\psi_1(\rho))$ or $\rho=1-\gamma_2(\psi_1(\rho))$. In either case, in $\omega$ there is exactly one infinite 0-cluster and one infinite 1-cluster. By \Cref{cac1}, in $\phi(\omega)$, there is a unique infinite contour incident to both the infinite 0-cluster and the infinite 1-cluster in $\omega$; hence in $\psi_1(\rho)=\phi(\omega)$, there is a unique infinite contour incident to both the infinite 0-cluster and the infinite 1-cluster in $\rho$.
\end{proof}

\section{Nonexistence of Infinite Clusters in the Marginal Unconstrained Configuration}\label{s8}

In this section, we prove \Cref{m3}.

Let $\mu$ be a measure on $\Omega$ satisfying (A1)--(A4). Let $\omega$ be a
constrained configuration in $\Omega$ with distribution $\mu$. Let
$\psi=\phi(\omega)\in \Phi$ be the corresponding contour configuration. Let
$\psi_1\subseteq\psi$ (resp.\ $\psi_2\subseteq \psi$) be the configuration of
primal (resp.\ dual) contours, i.e., the set of all edges of $\LL_1$ (resp.
$\LL_2$) present in $\phi$. Note that
\begin{eqnarray*}
\psi=\psi_1\cup\psi_2,\qquad\qquad \psi_1\cap\psi_2=\emptyset.
\end{eqnarray*}

Let $\rho=\gamma_2(\psi_1)\in\{0,1\}^{V(\LL_2)}$. Let $E_1$ be the event that  there are no infinite 1-clusters and no infinite 0-clusters in $\rho$.   Then \Cref{m3} follows from \Cref{e2}.

\begin{lemma}\label{l81}Let $C_1,C_2$ be two finite contours. If $C_1$ is in a bounded component of $\RR^2\setminus C_2$, then
\begin{enumerate}
\item $C_2$ is in an unbounded component of $\RR^2\setminus C_1$.
\item each bounded component of $C_1$ is in a bounded component of $C_2$.
\end{enumerate}
\end{lemma}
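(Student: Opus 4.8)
The plan is to treat both parts as purely topological facts about finite contours viewed as compact subsets of $\RR^2$, using the Jordan curve theorem applied to the interfaces (self-avoiding cycles) associated to each contour, together with \Cref{cin}, which guarantees that contours never cross their own or each other's interfaces. The key observation is that a finite contour $C$ is a bounded set, so $\RR^2\setminus C$ has exactly one unbounded component, and the bounded components of $\RR^2\setminus C$ are precisely enclosed by the finite self-avoiding cycles that make up the interface of $C$.

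For Part I, I would argue by contradiction. Suppose $C_1$ lies in a bounded component $B$ of $\RR^2\setminus C_2$, but $C_2$ is \emph{not} in an unbounded component of $\RR^2\setminus C_1$; that is, $C_2$ lies in a bounded component of $\RR^2\setminus C_1$. Since both contours are finite, each is enclosed in a bounded region determined by the other. The core step is to derive a contradiction from this mutual enclosure: if $C_2\subseteq B'$ for some bounded component $B'$ of $\RR^2\setminus C_1$, and $C_1\subseteq B$ for some bounded component $B$ of $\RR^2\setminus C_2$, then following the boundary interfaces one obtains that each cycle is enclosed by the other, which is impossible for two finite sets. Concretely, I would fix a point $p$ far from both contours (in the common unbounded component of $\RR^2\setminus(C_1\cup C_2)$, which exists since $C_1\cup C_2$ is bounded). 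A path from $p$ to infinity crosses neither contour, so $p$ cannot be separated from infinity by either one; but mutual enclosure would force $p$ to lie outside both bounded enclosing regions, contradicting $C_1\subseteq B$ once one checks that $B$ must then contain the unbounded region, which a bounded component cannot.

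For Part II, I would use the interface structure directly. Let $D$ be a bounded component of $\RR^2\setminus C_1$, enclosed by a self-avoiding cycle $\gamma$ of the interface of $C_1$. Since $C_1$ lies in the bounded component $B$ of $\RR^2\setminus C_2$, and $\gamma$ cannot cross $C_2$ by \Cref{cin}, the entire cycle $\gamma$ lies in a single component of $\RR^2\setminus C_2$; because $\gamma$ is arbitrarily close to $C_1\subseteq B$, that component is $B$ itself. Now the bounded region $D$ enclosed by $\gamma$ is contained in $B$ together with $\gamma$: any point of $D$ can be joined to $\gamma$ without crossing $C_2$ (as $C_2\cap D=\emptyset$, since $C_2$ lies outside $B\supseteq \overline{D}$ after accounting for the enclosure), so $D\subseteq B$, which is a bounded component of $\RR^2\setminus C_2$.

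I expect the main obstacle to be Part I, specifically making the ``mutual enclosure is impossible'' argument rigorous without circularity: one must carefully use that a \emph{finite} contour has a unique unbounded complementary component and that the unbounded component is shared, so that the enclosing regions $B$ and $B'$ cannot each be bounded while containing the other contour. The cleanest route is probably to invoke the Jordan curve theorem for the relevant interface cycles and track which side is unbounded, rather than reasoning about the contours directly; the finiteness hypothesis is exactly what rules out the pathological configurations, and I would make sure every step genuinely uses it.
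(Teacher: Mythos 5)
Your proposal is correct in outline but takes a genuinely different route from the paper, and in the opposite order. The paper's proof is two sentences: Part II follows from the fact that each bounded component of $\RR^2\setminus C_2$ is simply connected (so the simply connected open set $B$ containing the compact connected set $C_1$ must also contain every bounded component of $\RR^2\setminus C_1$), and Part I is then immediate, since Part II shows every bounded component of $\RR^2\setminus C_1$ lies inside a component of $\RR^2\setminus C_2$ and is therefore disjoint from $C_2$. You instead prove Part I directly by a separation argument and Part II via interface cycles. Your Part I does close, but the step you flag as the obstacle deserves to be spelled out: under mutual enclosure, the unbounded component $U_1$ of $\RR^2\setminus C_1$ is disjoint from $C_2$ (because $C_2\subseteq B'$ and $B'\cap U_1=\emptyset$), and since $\partial U_1\subseteq C_1$ and $C_1,C_2$ are disjoint compact sets, $U_1$ meets a small neighborhood of $C_1$ that lies entirely in $B$; hence $U_1\subseteq B$, contradicting boundedness of $B$. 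That is the precise sense in which ``$B$ must contain the unbounded region.'' Two smaller points: in Part II your parenthetical ``$C_2\cap D=\emptyset$, since $C_2$ lies outside $B\supseteq\overline{D}$'' is circular as written --- the clean fix is to deduce $C_2\cap D=\emptyset$ from Part I (a connected $C_2$ meeting the bounded component $D$ of $\RR^2\setminus C_1$ would lie inside it, which Part I forbids); and the detour through interface cycles and the Jordan curve theorem is unnecessary, since once $D$ is known to be a connected subset of $\RR^2\setminus C_2$ whose closure meets $B$, it lies in $B$ directly. What the paper's approach buys is brevity (one topological fact does all the work); what yours buys is that it avoids invoking simple connectivity of complementary components and stays closer to the interface machinery already developed in \Cref{cocl}.
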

\begin{proof}II follows from the fact that each bounded component of $C_2$ is a simply-connected set. I follows from II in an obvious way.
\end{proof}

\begin{lemma}\label{l82}Let $C_1,C_2,C_3$ be finite contours. If $C_1$ is in a bounded component of $\RR^2\setminus C_2$, and $C_2$ is in a bounded component of $\RR^2\setminus C_3$, then $C_1$ is in a bounded component of $C_3$.
\end{lemma}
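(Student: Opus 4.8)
The plan is to obtain the statement as an immediate transitivity consequence of the nesting result already established in \Cref{l81}. Recall that \Cref{l81}.II says: if a finite contour lies in a bounded component of the complement of another finite contour, then every bounded component of the complement of the inner contour is contained in a bounded component of the complement of the outer contour. (Here ``bounded component of $C$'' abbreviates ``bounded component of $\RR^2\setminus C$'', consistent with the phrasing of the present statement.) The key observation is that this is exactly a containment-transitivity statement, so it can be chained.

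First I would apply \Cref{l81}.II with $C_2$ playing the role of the inner contour and $C_3$ the role of the outer contour. The hypothesis that $C_2$ lies in a bounded component of $\RR^2\setminus C_3$ is precisely the input required, and the conclusion is that every bounded component of $\RR^2\setminus C_2$ is contained in some bounded component of $\RR^2\setminus C_3$.

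Next, let $B'$ be the bounded component of $\RR^2\setminus C_2$ containing $C_1$; such a component exists and contains all of the connected set $C_1$ by the hypothesis that $C_1$ lies in a bounded component of $\RR^2\setminus C_2$. By the previous step there is a bounded component $B''$ of $\RR^2\setminus C_3$ with $B'\subseteq B''$, whence $C_1\subseteq B'\subseteq B''$. Thus $C_1$ lies in the bounded component $B''$ of $\RR^2\setminus C_3$, which is the desired conclusion.

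I do not anticipate any real obstacle here: all the topological content — that the ``inside'' of a contour nested within another lands inside an ``inside'' of the outer one, ultimately resting on the simple connectivity of the bounded complementary regions used in the proof of \Cref{l81} — is already packaged in \Cref{l81}.II. The only point requiring a little care is matching the roles of the three contours correctly and noting that $C_1$, being connected, sits inside a single bounded component; beyond that the argument is a one-step transitivity chain.
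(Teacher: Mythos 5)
Your proof is correct and follows exactly the route the paper takes: the paper's entire proof of this lemma is the one-line remark that it follows from \Cref{l81}.II, and your write-up is just that deduction carried out explicitly (chaining the containment of bounded components through $C_2$). No issues.
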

\begin{proof}The lemma follows from \Cref{l81} II.
\end{proof}

\begin{lemma}\label{e2} $\mu(E_1)=1$.
\end{lemma}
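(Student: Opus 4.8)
The plan is to show that the site percolation $\rho=\gamma_2(\psi_1)$ on $V(\LL_2)\cong\ZZ^2$ has no infinite cluster. First I would record the structural features the hypotheses give. The clusters of $\rho$ are exactly the connected components of $G\setminus\psi_1$, equivalently the clusters of a constrained lift $\omega'\in\phi^{-1}(\psi_1)$. Flipping the four $\LL_1$-edges around a face of $\LL_1$ (the modification in Assumption (A4)) is precisely flipping the single vertex of $V(\LL_2)$ at the centre of that face, so $\rho$ has \emph{finite energy} in the ordinary single-site sense. Moreover $\rho$ is translation invariant, and since the partition of $V(\LL_2)$ into clusters depends only on $\psi_1$ and not on the global sign chosen in $\gamma_2$, this partition is governed by the ergodic measure $\nu_1$; in particular the number $T$ of infinite clusters is a.s.\ constant. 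Finally, by \Cref{m2} (already proved), $\psi_1$ has $\nu_1$-a.s.\ no infinite contour. Since $E_1$ is invariant under $\theta$, it also suffices to argue on either ergodic sign-component.

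Next I would bound $T$. Finite energy together with translation invariance and the amenability of $\ZZ^2$ gives, by the Burton--Keane argument applied to the ergodic law $\nu_1$, at most one infinite $1$-cluster and at most one infinite $0$-cluster; hence $T\in\{0,1,2\}$. The value $T=2$ means there are one infinite $0$-cluster and one infinite $1$-cluster simultaneously, and then \Cref{up} produces an infinite contour of $\psi_1$ incident to both, contradicting the absence of infinite contours in \Cref{m2}. This disposes of $T=2$.

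The remaining and hardest case is $T=1$: a single infinite cluster $\xi$, carrying one sign only. Here the absence of an infinite $\psi_1$-contour is not by itself contradictory (a ``supercritical'' configuration has a unique infinite cluster yet only finite contours), so the argument must bring in the true configuration $\omega$ and the dual contours $\psi_2$. Since \Cref{m2} makes every contour finite, I would set $h(v)$ to be the parity of the number of dual contours enclosing $v$; the nesting \Cref{l81,l82} make $h$ well defined once one knows each vertex is enclosed by only finitely many dual contours. Then $\omega\oplus h$ changes state exactly across $\psi_1$, so $\omega'=\omega\oplus h$ up to a global flip, and the clusters of $\rho$ arise from the (finite) clusters of $\omega$ by gluing across the finite dual contours. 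An infinite cluster $\xi$ of $\rho$ is therefore assembled from infinitely many finite $\omega$-clusters, bridged through dual contours.

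The main obstacle is converting this description into a contradiction. I would examine the unbounded ``sea'' $U$ left after deleting the closed interiors of all (finite) dual contours: on $U$ one has $h\equiv 0$ and hence $\omega'=\omega$, so each component of $\xi\cap U$ is a finite $\omega$-cluster, and $\xi$ links infinitely many such sea-clusters by excursions through the interiors of dual contours. The two points where I expect the real work to lie are: (i) that a.s.\ no vertex is enclosed by infinitely many dual contours, so that $h$ is defined and excursions have finite depth; and (ii) that the resulting locally finite ``bridging graph'' on finite $\omega$-clusters cannot have an infinite component. Step (ii) is morally another nonexistence-of-infinite-cluster statement, so I expect it to require a genuinely probabilistic input---finite energy plus translation invariance, in a Burton--Keane or mass-transport form---rather than pure planar topology. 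Once $T=1$ is excluded we are left with $T=0$, that is, $\mu(E_1)=1$.
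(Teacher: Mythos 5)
Your reduction to $T\in\{0,1,2\}$ (via single-site finite energy of $\rho$, translation invariance, and Burton--Keane), and your elimination of $T=2$ by combining \Cref{up} with the absence of infinite primal contours from \Cref{m2}, are both legitimate. But the case $T=1$ is where the content of the lemma lies, and your proposal does not close it: you leave your steps (i) and (ii) explicitly open, and step (i) is in fact false. The paper's own proof establishes that under (A1)--(A4) the origin is $\mu$-a.s.\ enclosed by infinitely many nested finite contours: if this failed with positive probability, then the intersection, over all (finite) contours $C$, of the unbounded component of $\RR^2\setminus C$ would be a nonempty, unbounded, connected set (by \Cref{p34}) containing vertices of $\ZZ^2$, and \Cref{l24} would then produce an infinite cluster of $\omega$, contradicting \Cref{m2}. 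By the $(1,1)$-translation symmetry from (A1) and ergodicity (A2), a.s.\ infinitely many of these nested contours are primal and infinitely many are dual. Consequently your parity function $h(v)$ (the parity of the number of dual contours enclosing $v$) is a.s.\ undefined, the decomposition $\omega'=\omega\oplus h$ never gets off the ground, and the ``bridging graph'' argument of step (ii) is never supplied.

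The irony is that the fact you would need to refute in order to define $h$ is exactly the fact that finishes the proof: once a.s.\ every vertex of $\LL_2$ is surrounded by infinitely many finite \emph{primal} contours, the $\rho$-cluster of that vertex is confined inside any one of them and hence finite. This kills $T=1$ and $T=2$ simultaneously, with no need for Burton--Keane or \Cref{up}. I would replace your case analysis by this single dichotomy on the nesting of finite contours around the origin, which is how the paper argues.
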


\begin{proof}By \Cref{m1}, $\mu$-a.s.\ there are neither infinite contours nor infinite clusters. Let $\mathcal{D}$ be the event that there exist infinitely many finite contours $C$, such that the origin is in a bounded component of $\RR^2\setminus C$. Note that $\mathcal{D}$ is a $\ZZ^2$-translation-invariant event. By (A2), either $\mu(\mathcal{D})=0$ or $\mu(\mathcal{D})=1$.

We first assume that  $\mu(\mathcal{D})=1$. Let $\mathcal{D}_1$ (resp.\ $\mathcal{D}_2$) be the event that there exist infinitely many infinite primal (resp.\ dual) contours $C$, such that the origin is in a bounded component of $\RR^2\setminus C$. By (A1), $\mu(\mathcal{D}_1)=\mu(\mathcal{D}_2)$. Since $\mathcal{D}_1\cup\mathcal{D}_2=\mathcal{D}$, we have
\begin{eqnarray}
\mu(\mathcal{D}_1\cup\mathcal{D}_2)=1.\label{d12}
\end{eqnarray}
By (A2), either $\mu(\mathcal{D}_1)=\mu(\mathcal{D}_2)=1$ or $\mu(\mathcal{D}_1)=\mu(\mathcal{D}_2)=0$. By (\ref{d12}), we have $\mu(\mathcal{D}_1)=\mu(\mathcal{D}_2)=1$. Then in $\rho$, a.s.\ there are no infinite clusters. Hence in this case $\mu(E_1)=1$.

Now we assume that $\mu(\mathcal{D})=0$. Let $\mathcal{C}$ be the collection of all finite contours. By \Cref{m1}, almost surely all the contours are in $\mathcal{C}$. For each $C\in\mathcal{C}$, $\RR^2\setminus C$ has exactly one unbounded component, denoted by $h(C)$. Let $R=\cap_{C\in\mathcal{C}}h(C)$.

First of all, $R$ is connected by \Cref{p34}. Next we will see that $R$ is nonempty and unbounded. Let $\mathcal{C}'$ be the collection of all the finite contours $C$, such that the origin is in a bounded component of $\RR^2\setminus C$. Since $\mu(\mathcal{D})=0$, we have $|\mathcal{C}'|<\infty$.

Assume that $\mathcal{C}'=\{C_1,\ldots, C_k\}$, such that $C_i$ is in a bounded component of $\RR^2\setminus C_{i+1}$, for $1\leq i\leq k-1$. Indeed, $C_1,\ldots, C_k$ can be found in the following way. Let $\ell_0$ be a path from the origin to infinity. Let $C_1$ (resp.\ $C_1$,\ldots $C_k$) be the first (resp.\ second,\ldots, $k$th) contour in $\mathcal{C}'$ intersecting $\ell_0$. By \Cref{l82}, for any integers $i,j$ satisfying $1\leq i<j\leq k$, $C_i$ is in a bounded component of $C_j$.

Let $v_0$ be a vertex in the unbounded component of $\RR^2\setminus C_k$, such that $v_0$ is incident to $C_k$. Then we claim that $v_0\in R$. Indeed, if $v_0$ is in a bounded component of a finite contour $C$, then obviously $C\neq C_k$. Then $C_k$ is in a bounded component of $\RR^2\setminus C$, since $v_0$ is incident to $C_k$, and $v_0$ is in a bounded component of $\RR^2\setminus C$. By \Cref{l82}, for $1\leq i\leq k-1$, $C_i$ is in a bounded component of $C$. Therefore $C\notin\{C_1,\ldots,C_k\}=\mathcal{C}$. Hence the origin is in the bounded component of another finite contour $C$, by \Cref{l81} II. Then $C\in\mathcal{C}$. The contradiction implies that $v_0\in R$. Hence $R$ is non-empty.

Let $\RR\cup\{\infty\}$ be the Riemann sphere. For each $C\in \mathcal{C}$,
let $h_1(C)$ be the component including $\infty$ in
$[\RR^2\cup\{\infty\}]\setminus C$. Then
$\cap_{C\in\mathcal{C}}h_1(C)=R\cup\{\infty\}$, where $R\cup\{\infty\}$ is a
subset of the Riemann sphere. By \Cref{p34}, $R\cup\{\infty\}$ is connected.
Hence $R$ is unbounded. By \Cref{l24} $\mu$-a.s.\ there exists an infinite
cluster in the constrained configuration. This contradicts \Cref{m1}. Hence
$\mu(\mathcal{D})=1$ and $\mu(E_1)=1$.
\end{proof}

\section{Non-symmetric Case}\label{pf4}

In this section, we prove \Cref{m5}.

\subsection{Proof of \Cref{m5}}  By \Cref{m4}, when $\mu$ satisfies (Ak1), (Ak2), (A4), almost surely there exists at most one infinite primal contour. Let $\sE_1$ be the event that there exists a unique infinite primal contour. To prove the theorem, it suffices to prove that $\mu(\sE_1)=0$.

Note that $\sE_1$ is a $2\ZZ\times 2\ZZ$ translation-invariant event. By (A2), we have either $\mu(\sE_1)=0$ or $\mu(\sE_1)=1$.

Assuming that $\mu(\sE_1)=1$, we will derive a contradiction. Let $\phi_1$ (resp.\ $\phi_2$) be the union of all primal (resp.\ dual)  contours. Here we identify $\phi_1$ and $\phi_2$ with their embeddings into the plane.

 Since $\mu(\sE_1)=1$, almost surely there is an unbounded component in $\RR^2\setminus \phi_2$ including the infinite primal contour. Let $\sA_1$ (resp.\ $\sA_0$) be the event that there is an infinite ``1''-cluster (resp.\ ``$0$''-cluster) in $\rho$, where $\rho$ is the induced configuration in $\{0,1\}^{V(\LL_1)}$ as defined in \Cref{ucpc}. By \Cref{l24}, and the bijection between infinite clusters in the constrained configuration and the unconstrained configuration as described in \Cref{ucpc}, we have
  \begin{eqnarray}
\lambda_1(\sA_1\cup \sA_0)=1,\label{a12}
\end{eqnarray}
where $\lambda_1$ is the induced measure on $\{0,1\}^{V(\LL_1)}$ as defined before (A5).
By symmetry we have $\lambda_1(\sA_1)=\lambda_1(\sA_0)$. By (A6), either $\lambda_1(\sA_1)=\lambda_1(\sA_0)=1$, or $\lambda_1(\sA_1)=\lambda_1(\sA_0)=0$. By (\ref{a12}), we have
\begin{eqnarray*}
\lambda_1(\sA_1\cap\sA_0)=1.
\end{eqnarray*}
Namely, in the induced site configuration $\rho$ in $\{0,1\}^{V(\LL_1)}$, almost surely there are infinite 1-clusters and infinite 0-clusters. By (A4), almost surely there is at most one infinite 1-cluster and at most one infinite 0-cluster in $\rho$; by the result in \cite{bk89}. Hence in $\rho$, almost surely there is exactly one infinite 1-cluster $\eta_1$, and exactly one infinite 0-cluster $\eta_0$. Using \Cref{up}, the total number of infinite dual contours of $\phi_2$ incident to both $\eta_1$ and $\eta_0 $ in $\rho$ is exactly one; denote it $C_2$.

Let $C_1$ be the unique infinite primal contour.  Since the vertex set of $C_1$ is a subset of either $\eta_1$ or $\eta_0$, and $C_2$ is incident to both $\eta_1$ and $\eta_0$, there is an infinite cluster in $\omega\in\Omega$ incident to both $C_1$ and $C_2$.

Let $\sB_1$ (resp.\ $\sB_0$) be the event that there is an infinite 1-cluster (resp.\ 0-cluster) in $\omega$ incident to both $C_1$ and $C_2$. Then
\begin{eqnarray}
\mu(\sB_1\cup \sB_0)=1.\label{b12}
\end{eqnarray}
 By (A4), we have $\mu(\sB_1)=\mu(\sB_0)$. By (A2), we have either $\mu(\sB_1)=\mu(\sB_0)=1$, or $\mu(\sB_1)=\mu(\sB_0)=0$. By (\ref{b12}), we have
 \begin{eqnarray*}
 \mu(\sB_0\cap\sB_1)=1.
 \end{eqnarray*}
 But this is impossible by \Cref{pnp}. Hence $\mu$-a.s.\ there are no infinite primal contours when $\mu$ satisfies (Ak1),(Ak2),(A4),(A5).
$\hfill\Box$

\section{Contours and Clusters in Dimer Model and XOR Ising Model}\label{pap}

In this section, we prove \Cref{dc,m17,cxor,uc,nicl,ehel}.

Recall from \Cref{ex} in the introduction that we associate dimer configurations on the square-octagon lattice to each contour configuration on $\LL_1\cup\LL_2$. Recall also that $\mu_D$ is the infinite-volume measure for dimer configurations on the square-octagon lattice obtained from weak limit of measures $\mu_{n,D}$ on tori, defined in (\ref{mnd}). Let $\mu_{D}^{*}$ be the marginal distribution of Type-II edges under $\mu_D$.
In order to prove \Cref{dc}, we first prove a lemma concerning the ergodicity of $\mu_{D}^{*}$.

\begin{lemma}\label{ed}For any given edge weights satisfying $(B1)$, $\mu_{D}^{*}$ is $2\ZZ\times 2\ZZ$-ergodic.
\end{lemma}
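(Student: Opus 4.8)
The plan is to deduce the ergodicity of the marginal $\mu_D^*$ from that of the full dimer measure $\mu_D$ on the square-octagon lattice. Under the bijection of \Cref{ex} between restrictions of perfect matchings to Type-II edges and configurations in $\Omega$, the measure $\mu_D^*$ is exactly the image of $\mu_D$ under the restriction map $\pi$ that sends a perfect matching to its set of Type-II edges. Under (B1) the weights, and hence $\mu_D$, are invariant under the lattice symmetries corresponding to the generators $(2,0)$ and $(0,2)$ of $2\ZZ\times 2\ZZ$ acting on $G$ (these shift coordinates by even amounts, so they preserve the black/white coloring and map squares to squares and octagons to octagons), and $\pi$ intertwines these symmetries with the $2\ZZ\times 2\ZZ$ action on $\Omega$. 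Thus the system $(\Omega,\mu_D^*,2\ZZ\times 2\ZZ)$ is a factor of $(\text{matchings},\mu_D,2\ZZ\times 2\ZZ)$. Since any factor of an ergodic measure-preserving system is ergodic --- a $2\ZZ\times 2\ZZ$-invariant event in $\Omega$ pulls back under $\pi$ to a $2\ZZ\times 2\ZZ$-invariant event of the same $\mu_D$-measure --- it suffices to prove that $\mu_D$ is $2\ZZ\times 2\ZZ$-ergodic.

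To prove ergodicity of $\mu_D$ I would in fact establish the stronger property of mixing. First I would invoke the determinantal description of $\mu_D$ from \cite{KOS06}: writing each edge as a pair $(w,b)$ of a white and a black vertex, the probability under $\mu_D$ that edges $(w_1,b_1),\dots,(w_r,b_r)$ are all present equals $\bigl(\prod_{i=1}^{r}K(w_i,b_i)\bigr)\det\bigl(K^{-1}(b_i,w_j)\bigr)_{i,j=1}^{r}$, where $K$ is the Kasteleyn operator and $K^{-1}$ the coupling function of the infinite periodic lattice, obtained as the $n\to\infty$ limit of the torus coupling functions. The key analytic input, again from \cite{KOS06}, is that $K^{-1}(b,w)\to 0$ as the lattice distance between $b$ and $w$ tends to infinity (polynomially at criticality, exponentially off criticality). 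Given two cylinder events $A$ and $B$, each determined by finitely many edges, I would expand $\mu_D(A\cap T^{k}B)$ by inclusion--exclusion into a signed sum of such present-edge probabilities, where $T\in 2\ZZ\times 2\ZZ$ and $k\to\infty$. As the edges defining $A$ and those defining $T^{k}B$ move arbitrarily far apart, every entry of $K^{-1}$ coupling the two groups tends to $0$; the relevant determinant therefore factorizes asymptotically into the product of the two diagonal blocks, giving $\mu_D(A\cap T^{k}B)\to\mu_D(A)\,\mu_D(B)$. This mixing along translations tending to infinity implies $2\ZZ\times 2\ZZ$-ergodicity.

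The routine but slightly technical points are checking that the period lattice of the weighted square-octagon lattice under (B1) corresponds to $2\ZZ\times 2\ZZ$ acting on $G$ (so that $\mu_D$ is genuinely $2\ZZ\times 2\ZZ$-invariant and $\pi$ is equivariant), and carrying out the inclusion--exclusion that reduces arbitrary cylinder events to present-edge events. The main obstacle is the analytic heart of the argument --- the decay of the coupling function $K^{-1}$ --- but this is precisely the content of the amoeba and spectral-curve analysis in \cite{KOS06}, which I would invoke as a black box rather than reprove. An alternative to the explicit mixing computation would be to cite that $\mu_D$ is an extremal translation-invariant Gibbs measure (equivalently, has trivial tail $\sigma$-algebra) and then pass to the factor $\mu_D^*$; I find the correlation-decay route more transparent given the determinantal formulas already available from \cite{KOS06}.
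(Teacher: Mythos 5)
Your proposal is correct and follows essentially the same route as the paper: both deduce $2\ZZ\times 2\ZZ$-ergodicity from strong mixing of the dimer measure, with the mixing ultimately resting on the correlation decay established in Sections 4.4--4.5 of \cite{KOS06}. The paper simply cites that mixing statement for cylinder events in the Type-II edges directly, whereas you unpack it via the determinantal formula and the decay of $K^{-1}$ and add a (harmless) factor-map reduction, but the key input and the logical structure are the same.
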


\begin{proof}Let $\sR$ be the set of all events that are defined in terms of the states of finitely many Type-II edges and that do not depend on the states of Type-I edges. Let $\sF_2$ be a $\sigma$-algebra on dimer configurations of $G$ generated by $\sR$. Let $E_1,E_2\in \sR$, and let $T_{x}$ and $T_{y}$ be translations by 2 units along horizontal and vertical directions, respectively. By Section 4.4. and 4.5 of \cite{KOS06}, we have
\begin{eqnarray}
\lim_{n\rightarrow\infty}\mu_{D}(E_1\cap T_{i}^n E_2)=\mu_{D}(E_1)\mu_{D}(E_2),\label{mx}
\end{eqnarray}
where $i\in\{x,y\}$.

The measure $\mu_D^*$ is strong mixing by (\ref{mx}), which implies that the measure is totally ergodic. In particular, it is $2\ZZ\times 2\ZZ$-ergodic.
\end{proof}

\bigskip
\noindent\textbf{Proof of \Cref{dc}.} As discussed before, there is a bijection that maps the restriction to Type-II edges of a dimer configuration on the square-octagon lattice, to a constrained percolation configuration in $\Omega$. Each constrained percolation configuration in $\Omega$ induces a contour configuration in $\Phi$. Therefore to prove the theorem, it suffices to prove that under the induced measure $\mu_{D}^*$ on $\Omega$, almost surely there are no infinite clusters. By \Cref{m2}, it suffices to check that $\mu_D^*$ of $\Omega$ satisfies Assumptions (A1)-(A4).

The translation-invariance assumptions (A1) follow from Assumptions (B1), (B4) on dimer weights. The ergodicity assumption (A2) follows from \Cref{ed}.

To check the symmetry assumption (A3), note that switching the states of all the four Type-II edges incident to the same square of the square-octagon lattice will give exactly the same weight; see Figure~\ref{sds}.
\begin{figure}
\centering
\begin{minipage}{.38\textwidth}\centering
\subfloat[$w_{e_1}$]{\includegraphics[width=.41\textwidth]{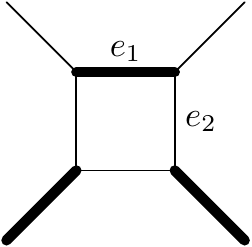}}
\end{minipage}
$\Longleftrightarrow$
\begin{minipage}{.53\textwidth}\centering
\subfloat[$w_{e_1}$]{\includegraphics[width = .3\textwidth]{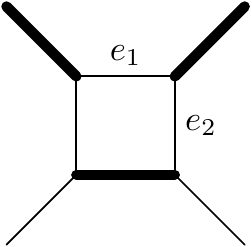}}
\end{minipage}
\begin{minipage}{.38\textwidth}\centering
\subfloat[$w_{e_2}$]{\includegraphics[width = .41\textwidth]{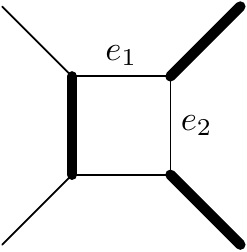}}
\end{minipage}
$\Longleftrightarrow$
\begin{minipage}{.53\textwidth}\centering
\subfloat[$w_{e_2}$]{\includegraphics[width = .3\textwidth]{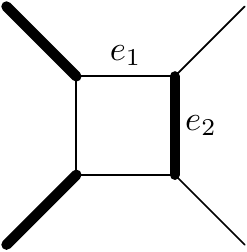}}
\end{minipage}

\begin{minipage}{.38\textwidth}\centering
\subfloat[$1$]{\includegraphics[width = .41\textwidth]{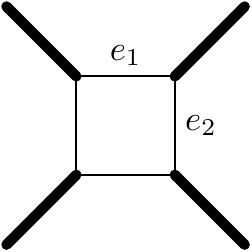}}
\end{minipage}
$\Longleftrightarrow$
\begin{minipage}{.53\textwidth}\centering
\subfloat[$w_{e_1}^2+w_{e_2}^2=1$]{\includegraphics[width = .3\textwidth]{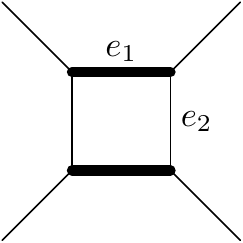}\qquad\includegraphics[width = .3\textwidth]{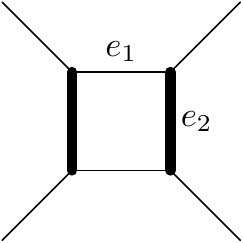}}
\end{minipage}
\caption{Switching configurations for all the four edges incident to the same square gives the same weight. Thick lines represent present edges in a dimer configuration; thin lines represent absent edges.}
\label{sds}
\end{figure}

To check the finite-energy assumption (A4), consider an XOR Ising model with spins located on vertices of $\LL_2$, in which each independent Ising model has coupling constants obtained from the edge weights of the dimer model on the square-octagon lattice by (\ref{we}). It is proved in \cite{bd14} that the configuration of contours of such an XOR Ising model has the same distribution as the configuration of primal contours of the constrained percolation on $G$. Changing a single spin of the XOR Ising model corresponds to changing the states of all the four edges of $\LL_1$ whose dual edges are incident to the spin. The finite-energy assumption (A4) follows from the finite-energy property of the XOR Ising model, which in turn follows from the finite energy of the Ising model. $\hfill\Box$

\bigskip
\noindent\textbf{Proof of \Cref{cxor}.}

Part I: Without loss of generality, we consider a critical XOR Ising model with spins located on vertices of $\LL^2$.  It is proved in \cite{bd14} that the configuration of infinite ``$+$'' or ``$-$'' clusters of the XOR Ising model, have the same distribution as the configuration of infinite components of $\LL_2\setminus\phi_1$, where $\phi_1$ is the (random) union of primal contours of a constrained percolation configurations on $G$, and $\LL_2\setminus\phi_1$ is the subgraph obtained from $\LL_2$ by removing all the edges of $\LL_2$ crossed by $\phi_1$. We claim that for the critical XOR Ising model, the constrained percolation measure satisfies Assumptions (A1)-(A4). To see why that is true, first note that as explained in \Cref{ex}, given dimer edge weights on the square-octagon lattice satisfying (B1)-(B4), by (\ref{we}) we can obtain coupling constants of each independent Ising model in the critical XOR Ising model. Moreover, the coupling constants of any critical Ising model can be obtained in such a way. By the bijection of dimer configurations restricted on Type-II edges and constrained percolation configurations on $\Omega$, if the dimer edge weights satisfy Assumptions (B1)-(B4), then the induced measure on $\Omega$ satisfies (A1)-(A4); see also the proof of \Cref{dc}.

  By \Cref{m3}, almost surely there are no infinite components in $G\setminus \phi_1$.  We conclude that almost surely there are no infinite ``$+$''-clusters, or infinite ``$-$''-clusters in the critical XOR-Ising model.

 Part II: It is proved in \cite{bd14} that the configuration of contours of a critical XOR Ising model with spins located on $\LL_2$ have the same distribution as the configuration primal contours of constrained percolation configurations on $G$, whose probability measures satisfies Assumptions (A1)-(A4). The Part II of the theorem follows from Part I of \Cref{m2}.
 $\hfill\Box$

\bigskip
 \noindent\textbf{Proof of \Cref{uc}.} Let $\nu$ be
the distribution of contours for an non-critical XOR Ising
model. As explained in \Cref{ex}, in the non-critical XOR
Ising model, each independent Ising model has coupling
constants which can be obtained by (\ref{we}) from edge
weights of the dimer model on the square-octagon lattice,
where the dimer edge weights satisfy Assumptions (B1)-(B3).
Using the bijection between dimer configurations on the
square-octagon lattice restricted on Type-II edges and
constrained percolation configurations on $\Omega$, we
infer that $\nu$ is the marginal distribution for primal
contours of a probability measure on the space $\Omega$ of
the constrained percolation configurations, such that the
measure on $\Omega$ satisfies Assumptions (Ak1), (Ak2),
(A4) with $k=1$. Then the theorem follows from \Cref{m4}.\
$\hfill\Box$
\\

In order to prove \Cref{nicl}, we first prove the following lemma.
\begin{lemma}\label{hle}The probability measure for any high temperature XOR Ising model on $\LL_1$ is $2\ZZ\times 2\ZZ$ ergodic.
\end{lemma}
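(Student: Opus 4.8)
The plan is to exhibit the high-temperature XOR Ising measure as a measurable, translation-equivariant image of the product of two independent high-temperature Ising measures, and then to deduce ergodicity from the \emph{strong mixing} of those measures. By definition $\sigma_{XOR}=\sigma_1\sigma_2$ is the pointwise product of two independent Ising models $\sigma_1,\sigma_2$ on $\LL_1$, each in the high-temperature state $F(J_h,J_v)>1$, so that its law (the weak limit under periodic boundary conditions) is the unique Gibbs measure. Writing $\mu_1,\mu_2$ for their laws and $\Psi:\{\pm 1\}^{V(\LL_1)}\times\{\pm 1\}^{V(\LL_1)}\to\{\pm 1\}^{V(\LL_1)}$, $\Psi(\sigma_1,\sigma_2)=\sigma_1\sigma_2$, the law of $\sigma_{XOR}$ is the pushforward $\Psi_*(\mu_1\otimes\mu_2)$. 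Since $\Psi$ commutes with the $2\ZZ\times 2\ZZ$-action, and $2\ZZ\times 2\ZZ$ acts as the full translation group of $\LL_1$ (whose vertices are spaced $2$ apart), it suffices to prove that $\mu_1\otimes\mu_2$ is ergodic under this action; ergodicity then descends to the factor.

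The crucial input is that each high-temperature Ising model is strong mixing, not merely ergodic. In the uniqueness regime $F(J_h,J_v)>1$ (equivalently $T>T_c$), the unique Gibbs measure exhibits exponential decay of truncated correlations; this can be read off, for instance, from the exponential decay of connectivities in the corresponding subcritical random-cluster representation. Decay of correlations gives strong mixing: for local functions $f,g$ on $\{\pm 1\}^{V(\LL_1)}$ one has $\mu_i\big(f\cdot(g\circ T_v)\big)\to\mu_i(f)\,\mu_i(g)$ as $|v|\to\infty$.

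I would then propagate mixing to the product and to the factor. For separable observables $F_1\otimes F_2$ and $G_1\otimes G_2$ on the product space, independence factorizes the correlation,
\begin{eqnarray*}
(\mu_1\otimes\mu_2)\big(F_1\otimes F_2\cdot (G_1\otimes G_2)\circ T_v\big)
&=&\mu_1\big(F_1\cdot(G_1\circ T_v)\big)\,\mu_2\big(F_2\cdot(G_2\circ T_v)\big),
\end{eqnarray*}
and each factor converges by the mixing of $\mu_1,\mu_2$. Since any local observable on a finite region is a finite linear combination of separable ones, linearity gives strong mixing of $\mu_1\otimes\mu_2$. Because $f(\sigma_1\sigma_2)$ and $g(\sigma_1\sigma_2)$ are local observables of the pair $(\sigma_1,\sigma_2)$, applying this with $F(\sigma_1,\sigma_2)=f(\sigma_1\sigma_2)$ and $G(\sigma_1,\sigma_2)=g(\sigma_1\sigma_2)$ yields $\Psi_*(\mu_1\otimes\mu_2)\big(f\cdot(g\circ T_v)\big)\to \Psi_*(\mu_1\otimes\mu_2)(f)\,\Psi_*(\mu_1\otimes\mu_2)(g)$, i.e.\ the XOR measure is strong mixing. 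Strong mixing implies $2\ZZ\times 2\ZZ$-ergodicity (an invariant event $A$ satisfies $\mu(A)=\lim_v\mu(A\cap T_v^{-1}A)=\mu(A)^2$), completing the argument.

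The main obstacle is securing strong mixing --- rather than mere ergodicity --- of each high-temperature Ising model, since it is exactly the mixing property that survives taking products and therefore passes to the XOR factor; the analogous implication with ergodicity alone is false (a product of ergodic systems need not be ergodic). Once decay of correlations throughout the uniqueness regime is invoked, the remaining steps are routine ergodic theory.
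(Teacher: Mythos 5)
Your proposal is correct and follows essentially the same route as the paper: reduce ergodicity to strong mixing, observe that the XOR spin is a pointwise product of two independent Ising spins so it suffices to establish strong mixing of each high-temperature Ising measure (the paper cites Proposition A.1 of \cite{AD15} for this, where you sketch it via decay of correlations in the uniqueness regime), and note that mixing---unlike mere ergodicity---survives products and pushforwards. Your explicit remark that ergodicity alone would not suffice is exactly the point the paper's one-line proof is implicitly relying on.
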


\begin{proof}It suffices to show that the probability
measure for any high-temperature XOR Ising model is strong
mixing. Since each XOR Ising spin is the product of two i.i.d. Ising spins, it suffices to show that the probability measure for any high-temperature Ising model is strong mixing; see Proposition A.1 of \cite{AD15} for a proof.
\end{proof}

\bigskip
\noindent\textbf{Proof of \Cref{nicl}.} Consider a low temperature XOR Ising model with spins located on vertices of $\LL_2$, in which each independent Ising model on $\LL_2$ has coupling constants $J_h$ (resp.\ $J_v$) on horizontal (resp.\ vertical) edges of $\LL_2$, satisfying (\ref{li}). We construct a dimer model on the square-octagon lattice (see Figure \ref{sol}), whose edge weights satisfy the following conditions:
\begin{enumerate}
\item each Type-II edge has weight 1;
\item let $e'$ be a Type-I edge parallel to an edge $e$ of $\LL^2$, such that $e'$ is also an edge of a square face crossed by $e$; then the weight $w_{e'}$ of $e'$ is given by (\ref{we});
\item let $e'$ and $e''$ be two Type-I edges sharing a vertex; then the weights $w_{e'}$ and $w_{e''}$ on $e$ and $e''$ satisfy
\begin{eqnarray*}
w_{e'}^2+w_{e''}^2=1.
\end{eqnarray*}
\end{enumerate}

Given a dimer model on the square-octagon lattice, with edge weights as described above, we can construct another XOR Ising model whose spins are located on vertices of $\LL_1$ as follows. Assume that each independent Ising model has coupling constant $K_h$ (resp.\ $K_v$) on horizontal (resp.\ vertical) edges of $\LL_1$, such that for each edge $f$ of $\LL_1$, let $f'$ be a Type-I edge parallel to $f$, such that $f'$ is also an edge of the square crossed by $f$; then the coupling constant $K_f$ on $f$ and the edge weight $w_{f'}$ of $f'$ satisfy
\begin{eqnarray*}
w_{f'}=\frac{2\exp(-2K_f)}{1+\exp(-4K_f)}.
\end{eqnarray*}
Then we can check that $K_h$, $K_v$ satisfy $F(K_h,K_v)>1$, where $F$ is defined by (\ref{f}). Hence the XOR Ising model on $\LL_1$ is in the high temperature state by (\ref{hii}).

Let $\mu$ be the probability measure of dimer configurations restricted on Type-II edges on the square-octagon lattice, with edge weights given as above, and obtained as the weak limit of probability measures on larger and larger tori. Then $\mu$ is a constrained percolation measure satisfying (A1), (A2), (A4). By \Cref{hle}, $\mu$ also satisfies (A5), since the induced configuration $\rho$ on $\{0,1\}^{V(\LL_1)}$, as given in before (A5), is exactly $\frac{\sigma_2+1}{2}$, where $\sigma_2$ is the high-temperature Ising spin located on $V(\LL_2)$. Hence by \Cref{m5}, in the low-temperature XOR Ising model, almost surely there are no infinite contours.
$\hfill\Box$

 \bigskip
 \noindent\textbf{Proof of \Cref{ehel}.} Without loss of generality, we consider an XOR Ising model with spins located on vertices of $\LL_1$, with probability measure $\mu$.

Consider a $4m\times 4m$ square of $\LL_1$ centered at the origin, denoted by $B_{4m}$. Let $\mu_{1,4m}^{+}$, $\mu_{2,4m}^{+}$ be two independent high temperature Ising measures on $B_{4m}$, both of which has ``$+$'' boundary conditions and the coupling constant $J_h$ (resp.\ $J_v$) on each horizontal (resp.\ vertical) edge, such that $J_h,J_v$ satisfy the high temperature condition (\ref{hii}). Let $\mu_{4m}^{+,+}$ be the XOR Ising measure obtained from the product measure of $\mu_{1,4m}^{+}$ and $\mu_{2,4m}^+$. Let $v$ be a vertex of $\LL_1$ inside the box $B_{4m}$. Let $E_{v}$ be the event that there exists a path in $B_{4m}$ connecting $0$ and $v$ consisting of edges of $\LL_1$, such that every vertex of $\LL_1$ along the path has the state ``$+$'' in the XOR Ising configuration.

Recall that for each vertex $u\in V(\LL_1)$, the state of $u$ in the XOR Ising model satisfy $\sigma_{XOR}(u)=\sigma_1(u)\sigma_2(u)$, where $\sigma_1$, $\sigma_2$ are spins in two independent Ising models, respectively. Conditional on any Ising configuration $\sigma_1$ on $B_{4m}$ with ``+'' boundary conditions, the event $E_{v}$ occurs if and only if there exists a path in $B_{4m}$ connecting $0$ and $v$, consisting of edges of $\LL_1$, such that every vertex of $\LL_1$ along the path has the same state in $\sigma_2$ as its state in $\sigma_1$.

Given the configuration $\sigma_1$, we may modify coupling constants for edges of $\LL_1$ as follows. Let $e\in E(\LL_1)$. If $e$ connects two vertices of $\LL_1$ with the same state in $\sigma_1$, then we preserve the coupling constant of $e$; if $e$ connects two vertices of $\LL_1$ with the opposite states in $\sigma_1$, then we change the coupling constants on $e$ from $J_e$ to $-J_e$. Let $\mu_{2,4m}^{\sigma_1,+}$ be the probability measure for an Ising model on $B_{4m}$ with coupling constants modified according to $\sigma_1$ as described above, and with ``$+$'' boundary conditions. Let $F_{v}$ be the event that there exists a path connecting $0$ and $v$ in $B_{4m}$, consisting of edges of $\LL_1$, such that each vertex of $\LL_1$ has the state ``$+$'' in an Ising model, then
\begin{eqnarray}
\mu_{4m}^{+,+}(E_v|\sigma_1)=\mu_{2,4m}^{\sigma_1,+}(F_v).\label{ccc}
\end{eqnarray}

Consider another Ising model on $\LL_1$, with coupling constant $0$ on all the edges, and external magnetic field $h$ satisfying $0<h<h_0$, where $h_0$ is given by (\ref{h0}). This is equivalent to an i.i.d Bernoulli percolation model on $\LL_1$, in which each vertex of $\LL_1$ is open with probability $p=\frac{e^h}{e^h+e^{-h}}$. When $h<h_0$, we have $p<p_c$, and therefore this is a subcritical percolation model. Let $H_v$ be the event that there exists a path connecting $0$ and $v$, consisting of edges of $\LL_1$, such that every vertex of $\LL_1$ along the path has state ``$+$'' in the Ising configuration. Let $\nu$ be the probability measure for such an Ising model. It is well known that
\begin{eqnarray}
\nu(H_v)\leq C_1e^{-C_2|v|},\label{edc}
\end{eqnarray}
where $C_1>0$, $C_2>0$ are constants independent of $v$, and $|v|$ is the $\ell^1$ distance between $0$ and $v$; since the percolation is subcritical.

Next we show that if $2(J_h+J_v)<h$, then $\nu$, restricted on $B_{4m}$, stochastically dominates $\mu_{2,4m}^{\sigma_1,+}$. It suffices to prove the F.K.G lattice condition, i.e., let $\omega_1,\omega_2\in\{\pm 1\}^{B_{4m}}$, then
\begin{eqnarray}
\nu(\omega_1\vee \omega_2)\mu_{2,4m}^{\sigma_1,+}(\omega_1\wedge\omega_2)\geq \nu(\omega_1)\mu_{2,4m}^{\sigma_1,+}(\omega_2)\label{fkgl}
\end{eqnarray}
where $\omega_1\vee \omega_2$ (resp.\ $\omega_1\wedge\omega_2$) is the maximal (resp.\ minimal) of $\omega_1$ and $\omega_2$. The condition (\ref{fkgl}) is easy to be checked by checking the contribution of every edge of $\LL_1$. Therefore, for the increasing events $F_v\subseteq H_v$, we have
\begin{eqnarray}
\mu_{2,4m}^{\sigma_1,+}(F_v)\leq \nu(F_v)\leq \nu(H_v).\label{sd}
\end{eqnarray}

By (\ref{ccc}), (\ref{edc}), (\ref{sd}), we have
\begin{eqnarray}
\mu_{4m}^{+,+}(E_v)&=&\sum_{\sigma_1\in\{\pm 1\}^{B_{4m}}}\mu_{4m}^{+,+}(E_v|\sigma_1)\mu_{1,4m}^{+}(\sigma_1)\notag\\
&=&\sum_{\sigma_1\in\{\pm 1\}^{B_{4m}}}\mu_{2,4m}^{\sigma_1,+}(F_v)\mu_{1,4m}^{+}(\sigma_1)\notag\\
&\leq&\nu(H_v)\leq C_1e^{-C_2|v|}.\label{edo}
\end{eqnarray}
Since (\ref{edo}) holds for any $m$, letting $m\rightarrow\infty$, we have
\begin{eqnarray*}
\mu(E_v)\leq C_1e^{-C_2|v|}.
\end{eqnarray*}
Then the mean cluster size $\chi$ satisfies
\begin{eqnarray*}
\chi=\sum_{v\in V(\LL_1)}\mu(E_v)<\infty.
\end{eqnarray*}
Then $\mu$-a.s.\ there are no infinite ``$+$''-clusters or infinite
``$-$''-clusters. Using the same arguments as in Proposition 1 of
\cite{Rus78}, and applying \Cref{uc}, we infer that $\mu$-a.s.\ there is
exactly one infinite contour for the high temperature XOR Ising model with
coupling constants $J_h$, $J_v$ satisfying the condition given in the
theorem.

Consider the low temperature XOR Ising model with coupling constants $J_h'$, $J_v'$ obtained from $J_h$, $J_v$ by (\ref{da1}), (\ref{da2}), and with spins located on vertices of $\LL_2$. Since in the high temperature XOR Ising model, almost surely there are infinite contours; in the dual low temperature XOR Ising model, almost surely there are infinite clusters containing the infinite contour in the high XOR Ising model. According to the finite energy condition and translation invariance, there exists at most 1 infinite ``$+$''-cluster and at most 1 infinite ``$-$''-cluster. But if there is 1 infinite ``$+$''-cluster and 1 infinite ``$-$''-cluster, there there exists an infinite contour, but this is a contradiction to \Cref{nicl}. Hence we conclude that in such a low temperature XOR Ising model, almost surely the total number of infinite ``$+$''-clusters and infinite ``$-$'' clusters is exactly 1.
$\hfill\Box$

\bigskip
\noindent\textbf{Proof of \Cref{m17}.}  For any given edge weights satisfying (B1)--(B3), the distributions of the primal and dual contours separating present and absent Type-II clusters of the dimer model on the square octagon lattice are distributions of contours of two dual Ising models on $\LL_2$ and $\LL_1$, respectively. Either both Ising models are critical - then by \Cref{cxor}, almost surely there are no infinite contours; or one Ising model is in the high temperature state, and the other Ising model is in the low temperature state - then by \Cref{uc,nicl}, there exists at most one infinite contour almost surely.
$\hfill\Box$

\begin{appendix}
\section{Proof of \Cref{sp}}\label{p51}

Recall that each contour configuration of $\LL_1$ induces two site
configurations $\rho$ and $1-\rho$ in $\{0,1\}^{V(\LL_2)}$.  An edge $e$ of
$\LL_2$ joins two vertices in $V(\LL_2)$ with different states if and only if
$e$ crosses a contour of $\LL_1$.  Moreover, every site configuration of
$\LL_2$ induces in this way a unique contour configuration of $\LL_1$.

Denote the sets of dual sites
$U:=V(B^*_{3,3})=\{-\tfrac72,-\tfrac32,\tfrac12,\tfrac52\}\times \{-\tfrac52,-\tfrac12,\tfrac32,\tfrac72\}\subset V(\LL_2)$
and $V:=V(B^*_{1,1})=\{-\tfrac32,\tfrac12\}\times\{-\tfrac12,\tfrac32\}\subset U$.  Let $E$ be the set
of primal edges of $\LL_1$ that cross some edge of $B^*_{3,3}$, (i.e.\ that
separate two sites of $U$).  Let $F=E(B_{2,2})\subset E$ (i.e.\ the set of
primal edges that are incident to at least one site in $V$).

To prove the lemma, suppose that we are given a dual site configuration
$\rho$ on $U\setminus V$, which induces a contour configuration $\phi$ on
$E\setminus F$.  We will extend $\rho$ to a configuration $\rho'$ on $U$,
which induces a contour configuration $\phi'$ on $E$, in such a way that all
present edges of $\phi'$ (including those of $\phi$) lie in the same
component.

In fact, our $\phi'$ will have an additional property.  Let $s_1,s_2,s_3,s_4$
be the vertices $(\pm1,0),(0,\pm 1)$ at the centers of the sides of
$B_{2,2}$. We say that the contour configuration $\phi'$ has \df{property
(S)} if it has a component $C$ that contains all of $s_1,\ldots,s_4$. We will
choose $\phi'$ to have property (S), regardless of which vertices $s_i$ had
incident present edges in the original configuration $\phi$.

Let $v_1,v_2,v_3,v_4$ be the primal vertices $(\pm1,\pm1)$ at the corners of
the square $B_{2,2}$, enumerated in counterclockwise order.  Each $v_i$ has
two incident edges in $E\setminus F$. We call $v_i$ a \df{double corner} if
both these edges are present in $\phi$, and a \df{single corner} if exactly
one of them is present.  Provided $\phi'$ is chosen to satisfy property (S),
any single corner $v_i$ will automatically lie in the component $C$. Indeed,
it has exactly two present incident edges in $\phi'$, one of which joins it
to some $s_j$.

Now let $K$ be the number of double corners.  Each $v_i$ is surrounded by
four vertices of $U$ at Euclidean distance $\sqrt 2$.  Let $w_i$ be the one
that lies in $V$, and let $u_i$ be the one opposite $w_i$, which is a corner
of $B^*_{3,3}$.  Suppose $v_i$ is a double corner.  Then we will always set
\begin{equation}\label{opp}
\rho'(w_i)=\rho(u_i)
\end{equation}
This ensures that $v_i$ has all four incident edges present
in $\phi'$. Provided (S) is satisfied, this double corner
will therefore belong to $C$. If $K=3$ or $K=4$ then these
incident edges themselves form a connected set that
contains $s_1,\ldots,s_4$, so (S) is indeed satisfied and
the lemma is proved in these cases.

Given $\rho$, define the \df{parity} of a double corner $v_i$ to be
$(-1)^{i+\rho(u_i)}$.  If all double corners have the same parity, then we
can choose $\rho'$ to be one of the two checkerboard configurations
$\scriptstyle\begin{smallmatrix}0&1\\1&0\end{smallmatrix}$ or
$\scriptstyle\begin{smallmatrix}1&0\\0&1\end{smallmatrix}$ on $V$ in such a
way that \eqref{opp} is satisfied for every double corner.  This ensures that
the origin has four incident present edges, so (S) is satisfied, and the
lemma is proved in this case.  In particular, the cases $K=0$ and $K=1$ are
covered.

The only remaining possibility is that $K=2$, and the two double corners have
different parities.  Modulo symmetries, there are two cases, depending on
whether the two double corners are adjacent or opposite around the square. In
both cases, \eqref{opp} determines $\rho'$ at two vertices of $V$.  We set
the states of $\rho'$ at the other two vertices of $V$ to differ from each
other.  It turns out that condition (S) is then satisfied, as shown in
Figure~\ref{cases}. This completes the proof.
\begin{figure}
\centering
\includegraphics[width=.55\textwidth]{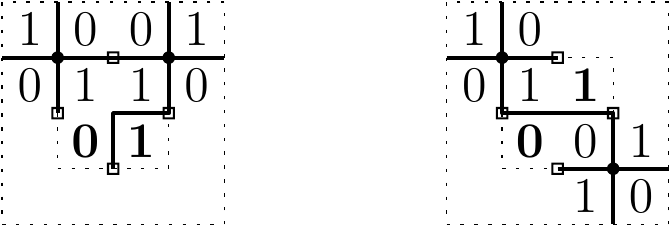}
\caption{The two possible cases of two double corners of different parities.
The sets of sites $V\subseteq U$ are enclosed by the two dotted squares.
The vertices $s_1,\ldots,s_4$ are marked with small squares.
Double corners are marked with filled discs: the four states surrounding a double corner are given by
the configuration $\rho$ together with \eqref{opp}. The final step is to choose the two states shown in
bold.  Contours are shown as solid lines. The unlabelled sites can have
arbitrary states.} \label{cases}
\end{figure}
\end{appendix}
\\
\\
\noindent\textbf{Acknowledgements.} We thank ICERM, the
Newton Institute and the Microsoft Theory Group for
hospitality. We thank Richard Kenyon, Geoffrey Grimmett,
Russel Lyons for helpful discussions and comments. ZL's
research is supported by the Simons Foundation grant 351813
and the National Science Foundation grant 1608896.

\bibliography{cpz18}
\bibliographystyle{amsplain}

\end{document}